\newtheorem{thm}{Theorem}[section]  
\newtheorem{prop}[thm]{Proposition}
\theoremstyle{definition}
\newtheorem{defn}[thm]{Definition}
\theoremstyle{remark}
\let\c@equation\c@thm
\numberwithin{equation}{section}
\newcommand{\R}{\mathbb{R}}
\newcommand{\Q}{\mathbb{Q}}
\newcommand{\Z}{\mathbb{Z}}
\newcommand{\C}{\mathbb{C}}
\newcommand{\g}{\mathfrak{g}}
\title{Scattering Theory on Higher $\Q$-rank Locally Symmetric Spaces}
\author{Punya Plaban Satpathy}
\date{}
\def\g{\mathfrak{g}}
\def\p{\mathfrak{p}}
\def\k{\mathfrak{k}}
\def\h{{\mathfrak h}}
\def\a{\mathfrak{a}}
\def\R{\mathbb{R}}
\def\C{\mathbb{C}}
\def\Z{\mathbb{Z}}
\def\Q{\mathbb{Q}}
\def\H{\mathbb{H}}
\def\M{\mathcal{M}}
\def\Aat{\mathcal{A}}
\def\T{\mathcal{T}}
\def\1{1\!\!1}
\def\gg{\mathfrak{g }}
\def\aa{\mathfrak{a }}
\def\1{1\!\!1}
\begin{document}

\begin{abstract}
In 1977, Victor Guillemin published a paper discussing geometric scattering theory, in which he related the Lax-Phillips Scattering matrices (associated to a noncompact hyperbolic surface with cusps) and the sojourn times associated to a set of geodesics which run to infinity in either direction.
This work was later extended to $\Q$-rank one Locally symmetric spaces coming from Semisimple Lie groups by Lizhen Ji and Maciej Zworski.
Here, we will extend some of the above mentioned results to higher rank locally symmetric spaces, in particular we will introduce higher dimensional analogues of scattering geodesics called \textbf{Scattering Flat} and study these flats in the case of the locally symmetric space given by the quotient
$SL(3,\Z) \backslash SL(3,\R) / SO(3)$. A parametrization space is discussed for such scattering flats as well as an associated vector valued parameter (bearing similarities to sojourn times) called \textbf{sojourn vector} and these are related to the frequency of oscillations of the associated scattering matrices coming from the minimal parabolic subgroups of $\text{SL}(3,R)$. The key technique is the factorization of higher rank scattering matrices.
\end{abstract}

\maketitle

\tableofcontents

\section{Introduction}
\label{intro}

The whole area of spectral analysis of differential operators on Manifold may be said to have begun, when Marc Kac announced his famous problem on \textbf{Hearing the shape of a Drum}, which in mathematically rigorous terms can be stated as follows. (Although Hermann Weyl was the first one to study the problem dealing with the spectral theory of bounded domains in $\mathbb{R}^d$ for $d = 2,3$).
  
Let $\Omega \subset \mathbb{R}^n$ be a bounded domain, then we are interested in solving the eigenvalue problem $\Delta \phi = \mu \phi $ on $\Omega$ (with either Dirichlet or Neumann boundary conditions). The spectral theorem asserts that there exists an orthonormal basis $f_n \in L^2(\Omega)$ consisting of (Dirichlet or Neumann) eigenfunctions, where the corresponding eigenvalues $\mu_1 \leq \mu_2 \leq \ldots \leq \mu_n \leq \ldots \longrightarrow \infty $ which accumulate only at $\infty $ and each have finite multiplicity.
  
\begin{prop}\label{prop:1_1}
Let $N(\lambda)$ be the Dirichlet eigenvalue counting function on a bounded domain $\Omega$. Then,
\begin{equation}
N(\lambda) \sim C_n [vol(\Omega)]\lambda^{n/2}
\end{equation}
where $C_n$ is a constant depending only on the dimension $n$ and on $\omega_n$, the volume of the unit ball in $\mathbb{R}^n$.
\end{prop}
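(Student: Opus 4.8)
The plan is to follow Weyl's original strategy of Dirichlet--Neumann bracketing, reducing the arbitrary domain $\Omega$ to an explicit spectral computation on rectangular boxes. First I would compute $N(\lambda)$ for a cube $Q = (0,a)^n$, where the Dirichlet eigenfunctions are the products $\prod_j \sin(\pi k_j x_j/a)$ indexed by $k \in \Z_{>0}^n$, with eigenvalues $\mu_k = (\pi/a)^2 |k|^2$. Thus the Dirichlet counting function $N_Q^D(\lambda)$ simply counts lattice points $k \in \Z_{>0}^n$ inside the ball of radius $(a/\pi)\sqrt{\lambda}$, and since that count is asymptotic to the volume of the positive octant of the ball we obtain
\[
N_Q^D(\lambda) \sim 2^{-n}\omega_n (a/\pi)^n \lambda^{n/2} = \frac{\omega_n}{(2\pi)^n}\,\V(Q)\,\lambda^{n/2}.
\]
The same leading term holds for the Neumann count $N_Q^N$ (replace $\sin$ by $\cos$ and $\Z_{>0}$ by $\Z_{\ge 0}$, which perturbs only lower-order terms). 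This already identifies the constant as $C_n = \omega_n/(2\pi)^n$.

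Next I would install a two-sided bound using the variational (min--max) characterization of the eigenvalues. Fixing a mesh size $\delta$, let $\mathcal{Q}^{\mathrm{in}}$ denote the cubes of the $\delta$-grid contained in $\Omega$ and $\mathcal{Q}^{\mathrm{out}}$ those meeting $\overline{\Omega}$. Extending Dirichlet eigenfunctions of the inner cubes by zero yields admissible trial functions in $H^1_0(\Omega)$, so that $N_\Omega^D(\lambda) \ge \sum_{Q \in \mathcal{Q}^{\mathrm{in}}} N_Q^D(\lambda)$; dually, decoupling the form along the grid with Neumann conditions can only lower eigenvalues, giving $N_\Omega^D(\lambda) \le \sum_{Q \in \mathcal{Q}^{\mathrm{out}}} N_Q^N(\lambda)$. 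Feeding in the box asymptotics from the previous step sandwiches $N_\Omega(\lambda)/\lambda^{n/2}$ between $C_n$ times the total inner and outer cube volumes.

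Finally I would send $\lambda \to \infty$ first and then $\delta \to 0$. The inner and outer cube volumes both converge to $\V(\Omega)$ precisely when the boundary layer $\{x : \mathrm{dist}(x,\partial\Omega) < \delta\}$ has volume tending to zero as $\delta \to 0$, i.e. when $\partial\Omega$ is Lebesgue-null. This boundary estimate is the main obstacle: controlling the cubes that straddle $\partial\Omega$ is exactly the point where some mild regularity of the boundary must be assumed, and it is the only step that is not purely formal. An alternative route that sidesteps explicit bracketing is to establish the short-time heat-trace asymptotic $\sum_n e^{-\mu_n t} \sim (4\pi t)^{-n/2}\,\V(\Omega)$ as $t \to 0^+$ and then invoke Karamata's Tauberian theorem; a quick check that $\frac{1}{(4\pi)^{n/2}\Gamma(n/2+1)} = \omega_n/(2\pi)^n$ confirms that this recovers the same constant $C_n$.
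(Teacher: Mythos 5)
Your proposal is the classical Dirichlet--Neumann bracketing argument of Weyl and Courant, and it is correct: the cube computation gives the right constant $C_n = \omega_n/(2\pi)^n$, the min--max monotonicity under imposing Dirichlet (respectively relaxing to Neumann) conditions along the grid gives the two-sided bound, and the Karamata route via the heat trace is a legitimate alternative that you correctly check yields the same constant. The paper itself supplies no proof of this proposition --- it is stated as classical background (only the closed-manifold analogue, Proposition \ref{prop:1_2}, is given a reference) --- so there is no argument in the paper to compare against; yours is the standard one. The single point worth emphasizing is the one you already flag: the statement as written assumes only a ``bounded domain,'' and the bracketing proof needs the boundary layer $\{x : \mathrm{dist}(x,\partial\Omega)<\delta\}$ to have vanishing volume as $\delta\to 0$, i.e.\ $\partial\Omega$ Lebesgue-null. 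This is implicit in the paper's statement and should be made an explicit hypothesis (or one should pass to the heat-kernel/Tauberian argument, which for the Dirichlet Laplacian tolerates rougher boundaries). With that hypothesis recorded, your proof is complete in outline and every step is standard.
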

  
Continuing along the above lines, an analogous result for a compact connected oriented manifold was obtained which we discuss now. $(M,g)$ be an n-dimensional compact connected oriented Riemannian manifold without boundary and let $\Delta$ denote the unique Friedrichs extension of the associated Laplacian on $M$ with respect to the metric $g$. Then it is a standard fact that, on the Sobolev space $H^2(M)$, the extended Laplacian $\Delta$ has a positive discrete spectrum with eigenvalues $\lambda_n \longrightarrow \infty$ as $n \longrightarrow \infty$.
 
\begin{prop}\label{prop:1_2}
Let $(M,g)$ be as above, and define the set
\begin{equation*}
N(\lambda) = \{\lambda_i \leq \lambda \in \R \hspace{0.1cm}|\Delta \phi_i  = \lambda_i \phi_i \hspace{0.1cm} for \hspace{0.1cm} some \hspace{0.1cm}  \phi_i \neq 0 \in L^2(M) \}
\end{equation*}
 
Then we have, $N(\lambda) \sim C_n [vol(M)]\lambda^{n/2}$ where $C_n$ only depends on the dimension of the manifold $M$
and on $\omega_n$, is the volume of the unit ball in $\mathbb{R}^n$.
\end{prop}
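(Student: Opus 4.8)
The plan is to prove this Weyl asymptotic via the heat kernel together with a Tauberian argument, which gives a clean passage from the local geometry of $M$ to the global asymptotics of $N(\lambda)$. Since $M$ is compact, the heat semigroup $e^{-t\Delta}$ is trace class for every $t>0$, and its trace is
\[
\mathrm{Tr}(e^{-t\Delta}) = \sum_i e^{-t\lambda_i} = \int_M K(t,x,x)\,dx,
\]
where $K(t,x,y)$ is the smooth, symmetric heat kernel on $(0,\infty)\times M\times M$. The strategy is to compute the $t\to 0^+$ asymptotics of this trace and then invert the resulting Laplace transform.

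First I would construct a parametrix for the heat operator near the diagonal via the Minakshisundaram--Pleijel expansion. In geodesic normal coordinates one posits an approximate kernel of the form $(4\pi t)^{-n/2}e^{-d(x,y)^2/4t}\sum_{k\ge 0}u_k(x,y)\,t^k$, solves the heat equation formally to obtain recursion relations for the coefficients $u_k$ (with $u_0\equiv 1$ on the diagonal), and then corrects the parametrix to the true kernel by a Duhamel/Volterra iteration whose error is $O(t^\infty)$ \emph{uniformly} on $M$, the compactness being exactly what makes this uniformity available. Restricting to the diagonal and integrating over $M$ yields the leading short-time asymptotic
\[
\mathrm{Tr}(e^{-t\Delta}) = (4\pi t)^{-n/2}\bigl(\mathrm{vol}(M) + O(t)\bigr), \qquad t\to 0^+.
\]
The intuition is that on a short time scale heat diffusion is essentially local, so to leading order the manifold behaves like $\R^n$, whose heat kernel on the diagonal is precisely $(4\pi t)^{-n/2}$.

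The final step is to pass from the asymptotics of the Laplace transform to those of the counting function through a Karamata Tauberian theorem. Writing $\mathrm{Tr}(e^{-t\Delta}) = \int_0^\infty e^{-t\lambda}\,dN(\lambda)$ and feeding the limit $t^{n/2}\,\mathrm{Tr}(e^{-t\Delta})\to (4\pi)^{-n/2}\mathrm{vol}(M)$ into the theorem gives
\[
N(\lambda) \sim \frac{\mathrm{vol}(M)}{(4\pi)^{n/2}\,\Gamma(\tfrac n2 + 1)}\,\lambda^{n/2}, \qquad \lambda\to\infty,
\]
which is the claimed statement with $C_n = (4\pi)^{-n/2}/\Gamma(\tfrac n2 + 1) = \omega_n/(2\pi)^n$, using $\omega_n = \pi^{n/2}/\Gamma(\tfrac n2 + 1)$.

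I expect the main obstacle to be the heat-kernel construction: one must justify that the formal parametrix genuinely approximates $e^{-t\Delta}$ with a remainder uniformly $O(t^\infty)$, and that term-by-term integration of the diagonal expansion is legitimate; the Tauberian input is essentially a black box once the trace asymptotic is secured. An alternative I would keep in reserve is Dirichlet--Neumann bracketing: partition $M$ into small geodesic cells, compare with the Euclidean counts supplied by Proposition~\ref{prop:1_1} on each cell, and use monotonicity of eigenvalues under imposing Dirichlet or Neumann conditions to sandwich $N(\lambda)$. This route sidesteps the heat kernel but demands careful control of the boundary contributions as the mesh is refined.
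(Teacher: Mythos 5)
Your proposal is correct and follows essentially the same route as the paper, which does not prove the statement itself but defers to Theorem 3.27 of Rosenberg's book and explicitly remarks that the result is a consequence of the heat-kernel trace formula; that reference proves Weyl's law exactly via the Minakshisundaram--Pleijel parametrix, the short-time trace asymptotic $\mathrm{Tr}(e^{-t\Delta})\sim(4\pi t)^{-n/2}\mathrm{vol}(M)$, and Karamata's Tauberian theorem. Your constant $C_n=\omega_n/(2\pi)^n$ is also the correct one.
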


For a proof, refer to Theorem 3.27 in \cite{Rosenberg1997}.

Note that this result about the asymptotics of the counting function of eigenvalues is, in fact, a result of the trace formula associated with the heat kernel of the Laplacian. The motivation for such trace formulas of course came from the classical Poisson summation formula which states that for $f \in C_0^{\infty}(\R^n)$ with the associated Fourier transform $\hat{f}$, we have

\begin{equation*}
\sum_{k=-\infty}^{\infty}f(k) = \sum_{k=-\infty}^{\infty}\hat{f}(k)
\end{equation*}
Refer to Theorem 0.1.16 in \cite{Sogge1993} for the proof and discussion of its connection to the spectral theory of the $\text{n}$ dimensional torus.

The first significant breakthrough in producing a Poisson type formula was achieved by Atle Selberg when he announced his famous Trace formula, which encodes the spectral data as well as the global geometry of a compact hyperbolic surface.

\begin{prop}
Let $X$ be a compact hyperbolic surface with the Laplacian $\Delta$ and the associated discrete spectrum given by the set
$\{\mu_i\}_{i=1}^{\infty}$. Denote by $\mathcal{T}$ the set of lengths of primitive closed geodesics in $X$. Then for $t>0$,

\begin{equation*}
    \sum_{i=1}^\infty e^{-t\mu_i} = \text{Area}(X)\frac{e^{-t}}{(4 \pi t)^{3/2}}\int_{0}^\infty \frac{re^{-r^2/4t}}{\text{Sinh}(r/2)}dr + \frac{e^{-t}}{(4 \pi t)^{1/2}}\sum_{\tau \in \T} \sum_{k=1}^{\infty} \frac{\tau}{2\text{Sinh}(k\tau/2)}e^{-k^2\tau^2 / t^2}
\end{equation*}
\end{prop}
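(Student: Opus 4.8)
The plan is to realize the spectral sum as the trace of the heat semigroup $e^{-t\Delta}$ on $X=\Gamma\backslash\mathbb{H}$ and to compute that trace geometrically by summing the heat kernel over the covering group $\Gamma$. First I would recall McKean's explicit formula for the heat kernel on the universal cover $\mathbb{H}$: because $\mathbb{H}$ is a rank-one symmetric space, its heat kernel $p_t(x,y)=k_t(d(x,y))$ depends only on the geodesic distance $r=d(x,y)$ and is given in closed form by a Gaussian-weighted integral. From this one builds the heat kernel on the quotient by periodization, $K_t(x,y)=\sum_{\gamma\in\Gamma}k_t(d(x,\gamma y))$; the exponential decay of $k_t(r)$ as $r\to\infty$ together with the discreteness and cocompactness of $\Gamma$ guarantees uniform convergence, and one checks directly that $K_t$ is $\Gamma$-invariant, smooth, solves the heat equation, and approximates the identity as $t\to 0$, so it is indeed the heat kernel of $X$.

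Second, I would identify the two sides. On the spectral side, compactness of $X$ forces $\Delta$ to have discrete spectrum $\{\mu_i\}$ with an orthonormal eigenbasis $\{\phi_i\}$, and Mercer's theorem gives the spectral representation $K_t(x,y)=\sum_i e^{-t\mu_i}\phi_i(x)\overline{\phi_i(y)}$. Integrating over the diagonal yields $\tr\,e^{-t\Delta}=\int_X K_t(x,x)\,dx=\sum_i e^{-t\mu_i}$, the left-hand side of the formula. On the geometric side I would substitute the periodized kernel and unfold, writing $\int_X K_t(x,x)\,dx=\int_{\Gamma\backslash\mathbb{H}}\sum_{\gamma\in\Gamma}k_t(d(x,\gamma x))\,dx$, and then organize the group sum by $\Gamma$-conjugacy classes.

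Third, I would evaluate each conjugacy-class contribution. Since $\Gamma$ is torsion-free and cocompact, the identity is the only central element and every other element is hyperbolic; each hyperbolic class is represented by $\gamma_0^k$ for a primitive $\gamma_0$ (translation length $\tau$, $k\ge 1$), with centralizer the cyclic group $\langle\gamma_0\rangle$. The identity class unfolds to $\text{Area}(X)\,k_t(0)$, and inserting McKean's value of $k_t(0)$ produces the volume term $\text{Area}(X)\,\tfrac{e^{-t}}{(4\pi t)^{3/2}}\int_0^\infty \tfrac{r e^{-r^2/4t}}{\sinh(r/2)}\,dr$. For a hyperbolic class I would unfold the orbital integral to the cylinder $\langle\gamma_0\rangle\backslash\mathbb{H}$ and pass to Fermi coordinates $(u,\rho)$ along the axis of $\gamma_0$, where the displacement satisfies a relation of the form $\cosh d(x,\gamma_0^k x)=1+(\cosh k\tau-1)\cosh^2\rho$; the $u$-integral over one fundamental period returns the length $\tau$, while the $\rho$-integral against McKean's kernel collapses to the factor $\frac{\tau}{2\sinh(k\tau/2)}$ times the Gaussian in $k\tau$. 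Summing over $k\ge 1$ and over the primitive lengths $\tau\in\mathcal{T}$ gives the geodesic sum, completing the identification.

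The main obstacle I anticipate is the hyperbolic orbital integral: reducing the two-dimensional integral over the cylinder to the explicit one-dimensional form with the $\sinh(k\tau/2)$ denominator requires McKean's precise kernel and a careful Fermi-coordinate change of variables, essentially an Abel-transform computation, with attention to the $1/4$-shift in the eigenvalue normalization responsible for the overall factor $e^{-t}$. A secondary technical point is justifying the interchange of summation and integration throughout: one must balance the exponential decay of $k_t$ in $r$ against the exponential growth of the lattice-point and prime-geodesic counts, which is precisely where cocompactness of $\Gamma$ enters.
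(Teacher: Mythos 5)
The paper offers no proof of this proposition; it is stated as background with a pointer to the literature (Borthwick), so there is nothing internal to compare your argument against. Your outline is the standard heat-kernel proof of the Selberg trace formula for a compact hyperbolic surface: periodize McKean's kernel on $\mathbb{H}$ over $\Gamma$, equate the Mercer/spectral expansion of $\mathrm{Tr}\,e^{-t\Delta}$ with the geometric expansion organized by $\Gamma$-conjugacy classes, and evaluate the identity class (area term, using $\cosh s-1=2\sinh^2(s/2)$ to get the $\sinh(r/2)$ denominator) and the hyperbolic classes (length terms, via Fermi coordinates and the displacement identity $\cosh d(x,\gamma_0^k x)=1+(\cosh k\tau-1)\cosh^2\rho$). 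This is correct in all essentials, and the two technical points you single out are exactly the right ones; for a cocompact torsion-free Fuchsian group the convergence bookkeeping is routine, since orbital counts grow like $e^{r}$ while $k_t(r)$ decays like a Gaussian in $r$. One remark: carried out with the standard hyperbolic Laplacian (eigenvalues $\mu=\tfrac14+r^2$), your computation produces the prefactor $e^{-t/4}$ in both terms and the Gaussian $e^{-k^2\tau^2/(4t)}$ in the geodesic sum; the exponents $e^{-t}$ and $e^{-k^2\tau^2/t^2}$ as printed in the proposition appear to be typographical slips in the paper rather than something your argument should, or could, reproduce.
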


For a detailed discussion refer to \cite{Borthwick2016}.

Since then, several attempts have been made to generalize Selberg's formula to more general Riemannian manifolds. The most notable generalization is the Duistermaat-Guillemin distributional trace formula\cite{Duistermaat1975}, which 
relates the spectral geometry(in terms of $L^2$ eigenvalues of the Laplacian) on a compact manifold to the geodesic flow dynamics on the tangent bundle in terms of the length spectrum of closed geodesics.

The scenario is rather complicated if the underlying manifold $M$ is noncompact; this would create the possibility for the existence of both a discrete as well as a continuous spectrum for the Laplacian. One can show that the associated continuous spectrum will not change under any compact perturbations of $M$ and so as a set it depends only on the geometry at infinity. For a discussion on the decomposition principle refer to Proposition 2.1 in \cite{Donnelly1979}.

     Consider a hyperbolic surface of fixed genus having a finite number of inequivalent cusps along with a finite area. Then the spectrum of the Laplacian on such a surface has both discrete and as well as a continuous spectrum, which will be discussed extensively in chapter 3.
    
    For such a hyperbolic surface, it is possible to write down an analog of the Selberg trace formula. There are several ways to achieve this; one such method is Geometric scattering theory. Peter Lax and Ralph Phillips used this framework to study the automorphic wave equation on such a noncompact hyperbolic surface and constructed Eisenstein series along with an associated set of scattering matrices as part of their proof of the heat trace formula.
    
    For details, refer to \cite{Lax1977}.
    
   In 1976 Victor Guillemin wrote a paper in which he investigated geometric scattering theory for several scenarios, one of which was the case of a noncompact hyperbolic surface as above.
   
   In order to give a geometric context to scattering matrices he defined a certain class of scattering geodesics and showed a correspondence that was very similar to a Poisson type formula. The key idea was to use the result of Lax and Phillips which expressed the scattering matrix in terms of certain horocyclic integrals of Eisenstein Series.

     An important class of noncompact Riemannian manifolds with a non-empty continuous spectrum is given by finite volume noncompact locally symmetric spaces of the form $M = \Gamma \backslash G / K$, where $G$ is a semi simple lie group, K one of its maximal compact subgroups and $\Gamma$ being a neat finite co-volume arithmetic subgroup of $G$. The geometry at infinity of such a manifold $M$ is completely described by the reduction theory of $G$ with respect to the $\Gamma$ conjugacy classes of rational parabolic subgroups.

    The reduction theory tells us that $M$ then can be decomposed as a union of a compact core $M_c$ and a certain number of noncompact Siegel sets each associated rational parabolic subgroups of $G$, for details refer to the precise reduction theory in (add reference).

     For such a locally symmetric space $M$ one can show the existence of scattering geodesics (look for the details in section 6.2),  which run from one end to another and are eventually distance minimizing in either direction.
    These scattering geodesics spend a finite amount of time in the compact core $M'$ and this is the \textbf{sojourn Time} associated to the scattering geodesics.

 One of the earliest results about the relations between these sojourn times and the associated scattering matrices related to the locally symmetric space $M$ was done by Lizhen Ji and Maciej Zworski in \cite{Ji2001}. This is where the spectral decomposition of $M$ comes into the picture, it was completely worked out by Robert Langlands, essentially the discrete spectrum is characterized by a set of $L^2$ eigenfunctions, and the continuous spectrum is characterized by the \textbf{Eisenstein Series} associated to rational parabolic subgroups of $G$ as well as associated cusp forms on the boundary locally symmetric spaces. The main idea of proof in \cite{Ji1999} is  very similar to that of Guillemin, which involves expressing the scattering matrix as a horocyclic integral of an Eisenstein series from the work of  Langlands and Harishchandra .
    
    This is the approach we will use to explore certain aspects of geometric scattering theory on higher rank locally symmetric spaces.
    
 Here is the outline of for the rest of the paper, section two two gives a quick review  Guillemin's\cite{Guillemin1976} work on scattering geodesics in non-compact hyperbolic surfaces.

  In section three and four we review the geometry at Infinity of a general locally symmetric space, discuss the associated spectral resolution of the Laplacian along with the scattering matrices. Then in section five, we review the results from \cite{Ji1999} about scattering geodesics and the latter sections introduce the notion of scattering flats, certain higher dimensional analogue of scattering geodesics. These scattering flats are shown to have certain associated sojourn vectors and the result involves studying the singular support of higher rank scattering matrices and relating that to the sojourn vectors. A key tool is the result of Harishchandra \cite[Chapter 2, Section 5]{HarishChandra1968} involving factorization of higher rank scattering matrices .

We now state the main results of this paper. 

\begin{thm}
    Let $\Gamma \backslash X$ be a locally symmetric space of rational rank equal to $\text{q}$, where $X = G/K$, with $G$ being the real locus semi-simple complex linear algebraic group $\boldsymbol{G}$, along with a maximal compact subgroup $K \subset G$. We further assume that $Rank_{\Q}(\boldsymbol{G}) = Rank_{\R}(\boldsymbol{G}) $.
    
    I)For any two minimal associate distinct rational parabolic subgroups of $\boldsymbol{G}$ and for any $\gamma \in \Gamma$, there is a family of scattering flats in $\Gamma \backslash X$ with a common sojourn vector which only depends on $\gamma$.
    Furthermore, this family of scattering flats is parametrized by a common finite cover of the boundary locally symmetric spaces associated to the two rational parabolic subgroups(Theorem \autoref{theorem:6_4}).
    
    II)Any such family of scattering flats project onto a family of scattering geodesics in a certain associated boundary locally symmetric space $S_P$, corresponding to a rational parabolic subgroup $P$ such that the sojourn time associated to this family of scattering geodesics is precisely the norm of the sojourn vector(Theorem \autoref{theorem:8_5}) .
\end{thm}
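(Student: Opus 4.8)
The plan is to work in the symmetric space $X = G/K$ covering $\Gamma \backslash X$, to realize the scattering flats as maximal flats whose boundary apartment has its two opposite ends asymptotic to the chambers at infinity determined by $P_1$ and $P_2$, and then to descend to $\Gamma \backslash X$. First I would fix the horospherical (Langlands) decompositions $X \cong N_i \times A_i \times X_{M_i}$ attached to the minimal rational parabolics $P_i$ for $i = 1,2$; under the hypothesis $\rank_\Q(\boldsymbol{G}) = \rank_\R(\boldsymbol{G})$ the split components $A_i$ are maximal flats of dimension $q$ while the boundary symmetric spaces $X_{M_i}$ collapse to points, so the only moduli are the unipotent directions $N_i$ together with the finite-volume arithmetic quotients on the boundary. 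Since a chamber and a chamber opposite to it in a common apartment are exactly the two ends of a maximal flat, for each $\gamma \in \Gamma$ I would take $F_\gamma$ to be the flat whose opposite ends are asymptotic to the chamber of $P_1$ and to the chamber of $\gamma \cdot P_2$; the opposition (general-position) condition needed for $F_\gamma$ to exist holds on a dense set, and $\gamma$ records which $\Gamma$-translate of the cusp of $P_2$ is used.

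Second, I would invoke the precise reduction theory to verify the scattering property: as the flat parameter runs far out toward either end, $F_\gamma$ eventually enters and stays inside the Siegel set attached to $P_1$ (respectively $P_2$), on which the covering projection $X \to \Gamma \backslash X$ is injective and a straight flat is eventually distance-minimizing. The sojourn vector $v(\gamma) \in \a$ is then defined as the displacement in the $A$-coordinate that records the relative position of the two Siegel frames as one passes from the cusp of $P_1$, through the compact core, to the cusp of $P_2$. The crucial point is that this displacement is insensitive to the transverse ($N$- and $X_M$-) parameters and depends only on $\gamma$ and the ordered pair $(P_1,P_2)$; I expect this to be the main obstacle, since it requires comparing two distinct horospherical coordinate systems simultaneously and controlling the transition through the core. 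Once independence of the transverse data is established, the residual freedom in choosing $F_\gamma$ is exactly the transverse position, which lives in the boundary arithmetic quotients attached to $P_1$ and $P_2$; matching these through $\gamma$ identifies the family with a common finite cover of $S_{P_1}$ and $S_{P_2}$, which is the parametrization asserted in (I).

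For (II), I would use the direction of the sojourn vector to single out the relevant parabolic: the ray $\R_{>0}\, v(\gamma)$ lies on a face of the Weyl chamber and hence determines a rational parabolic $P \supseteq P_1$ whose split torus is aligned with $v(\gamma)$. Projecting the scattering flat along the horospherical fibration $\Gamma \backslash X \to S_P$ collapses every flat direction transverse to $v(\gamma)$, so that the $q$-dimensional flat maps to a single geodesic $\sigma$ in $S_P$; the reduction theory induced on the boundary then shows that $\sigma$ runs from one cusp of $S_P$ to another and is eventually distance-minimizing, i.e. a scattering geodesic in the sense of Ji and Zworski. Finally I would compute its sojourn time: by construction the only surviving contribution to the core-sojourn of $\sigma$ is the length carried by $v(\gamma)$, so that its sojourn time equals $\|v(\gamma)\|$. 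Here I would lean on the Harish-Chandra factorization of the higher rank scattering matrix to align the boundary scattering data with the ambient data; verifying that the collapse yields a single geodesic rather than a lower-dimensional flat, and that its sojourn time is precisely the norm and not some other functional of $v(\gamma)$, is the delicate point of this half.
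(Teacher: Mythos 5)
Your outline of Part I has the right overall shape but stops exactly where the paper's proof does its real work. The claim that the sojourn vector depends only on $\gamma$ and not on the transverse data is not something to be ``expected'': the paper obtains it by producing a \emph{common split component} for the two parabolics (\autoref{prop:8_1}). Writing the Bruhat decomposition $\gamma=u_2\gamma_a z w u_1$ with $u_1,u_2\in N$, $\gamma_a\in A$, $z\in M$ and $w$ a nontrivial Weyl representative, one checks that $A^{u_2}$ is a split component of both $Q$ and $Q^{\gamma}=\gamma Q\gamma^{-1}$, so both Siegel frames are read in the \emph{same} $A$-coordinate and the displacement is literally $\log(\gamma_a)$ (\autoref{def:8_2}). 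This single algebraic step replaces the ``comparison of two horospherical coordinate systems through the core'' that you flag as the main obstacle but do not resolve, and it also removes your density/general-position caveat: the construction works for every $\gamma\notin Q$. The identification of the parametrizing space with $(n^{-1}\Gamma n\cap M_Q)\backslash X^{Q}$ matches the paper, so that half of your argument is fine.

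Part II contains a concrete error. You choose $P$ so that its split torus is \emph{aligned with} $v(\gamma)$ and then project along the horospherical fibration $\Gamma\backslash X\to S_P$; but that fibration kills precisely the $A_P\times N_P$ factors, so with your choice of $P$ the projection annihilates the sojourn direction rather than preserving it, and the image curve carries no sojourn information. (Moreover $\log(\gamma_a)$ is an arbitrary element of $\a$ and need not span a rational wall, so ``the ray $\R_{>0}\,v(\gamma)$ determines a rational parabolic'' is unjustified.) The paper does the opposite: it \emph{assumes} $Q_1,Q_2$ correspond to adjacent chambers separated by a simple root $\alpha$ (\autoref{def:7_3}) --- a hypothesis your proposal omits --- sets $\a_P=\{H\in\a\mid\alpha(H)=0\}$, so that $P$ has rank $q-1$ and contains both $Q_1$ and $Q_2^{\gamma}$, and projects to the rank-one boundary space $S_P$; the surviving direction is the one-dimensional complement $\tilde{\a}$ of the wall, and the sojourn time is computed by intersecting the projected geodesic with the two horospheres $B_{Q^1,r}$ and $B_{Q^3,r}$ (\autoref{theorem:8_4}). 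Finally, the Harish-Chandra factorization of scattering matrices plays no role in this geometric computation --- it enters only on the spectral side in Sections 7 and 9 --- so invoking it here does not close the gap.
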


\begin{thm}
    Let $\text{X} = SL(3,\Z) \backslash SL(3,\R) /SO(3)$ be the rational rank two locally symmetric space. Let $P_0$ be the minimal parabolic subgroup of $SL(3,\R)$ consisting of upper triangular matrices. For a chosen $Id \neq w \in S_3$, let $C(w,\lambda)$ denote the rank two scattering matrix corresponding to $P_0 $ with $\lambda =(\lambda_1,\lambda_2,\lambda_3) \in \C^3$ and $Re(\lambda_i ) >> 0$ then for  $\eta =(\eta_1,\eta_2,\eta_3) \in R^3$ and $w = (12) \in S_3$, the Singular support of the generalized Fourier transform of $C(w,i\eta)$ is precisely the set $\{(T,T,0) \in \R^3\}$, where $T \in \T$ with $\T$ being the set of sojourn times associated to scattering geodesics on the hyperbolic surface $SL(2,\Z) \backslash \H$ running between the unique cusp end at $\infty$ to itself(Theorem \autoref{theorem:9_21}).
\end{thm}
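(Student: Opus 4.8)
The plan is to reduce the rank-two statement to the known $\Q$-rank one computation by means of Harish-Chandra's factorization of the scattering matrix, and then to match the resulting singular support against the modular-surface sojourn times. First I would apply the factorization \cite[Chapter 2, Section 5]{HarishChandra1968} of $C(w,\lambda)$ along a reduced decomposition of $w \in S_3$ into the simple reflections $s_1 = (12)$ and $s_2 = (23)$. Since $w = (12) = s_1$ is itself a simple reflection, its length is one and the factorization collapses to a single factor, namely the rank-one scattering matrix attached to the simple root $\alpha_1 = e_1 - e_2$. This factor depends on $\lambda$ only through the pairing $\langle \lambda, \alpha_1^{\vee}\rangle = \lambda_1 - \lambda_2$, so that $C((12),\lambda)$ decouples entirely from the $\lambda_3$-direction and from the $(\lambda_1+\lambda_2)$-direction; this decoupling is precisely what should produce the vanishing third coordinate in the sojourn vector.

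Next I would identify this rank-one factor with the classical scattering matrix of $SL(2,\Z)\backslash\H$, namely
$$\varphi(s) = \sqrt{\pi}\,\frac{\Gamma(s-\tfrac12)}{\Gamma(s)}\,\frac{\zeta(2s-1)}{\zeta(2s)},$$
evaluated at the point determined by $\lambda_1 - \lambda_2$. This is the assertion that the Levi piece of $P_0$ cut out by $\alpha_1$ is the standard copy of $SL(2)$ inside $SL(3)$, whose Eisenstein series and constant-term (Langlands) computation reproduce exactly the Riemann zeta factors; I would verify it directly from the constant-term computation already used to define $C(w,\lambda)$.

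With the factorization established, the generalized Fourier transform of $C((12),i\eta)$ splits into a transform in the two decoupled directions, where the symbol is constant and hence contributes Dirac masses at the origin, and a transform in the remaining $\eta_1-\eta_2$ direction. The first piece forces the support onto a single line, which in the coordinates of the theorem is $\{(T,T,0)\}$, while the second piece is governed by the $\Q$-rank one analysis of Guillemin and Ji--Zworski \cite{Guillemin1976,Ji1999}, which identifies the singular support of the Fourier transform of $\varphi(\tfrac12 + i r)$ with the set $\T$ of sojourn times of scattering geodesics on the modular surface running from the cusp at $\infty$ to itself. Writing
$$\frac{\zeta(2s-1)}{\zeta(2s)} = \sum_{k\ge 1}\phi(k)\,k^{-2s}$$
and restricting to $s = \tfrac12 + ir$ exhibits the oscillatory factors $e^{-2ir\log k}$, whose frequencies $T_k = 2\log k$ are exactly these sojourn times.

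The main obstacle is upgrading the containment of the singular support in $\{(T,T,0):T\in\T\}$ to an equality. One must check that the Gamma prefactor $\Gamma(s-\tfrac12)/\Gamma(s)$ introduces no spurious singularities: by Stirling it is a smooth symbol of order $-\tfrac12$ on the critical line, so its inverse transform is singular only at the origin and convolves harmlessly against the pure-frequency masses, leaving the singular support at the locations $\{2\log k\}$. Surjectivity onto all of $\T$ then follows from the strict positivity $\phi(k) > 0$ of the Dirichlet coefficients, which prevents cancellation of any frequency. The remaining delicate point is purely bookkeeping: tracking the normalization of the Cartan subalgebra and the definition of the sojourn vector from Theorem~\autoref{theorem:8_5} carefully enough to confirm that the decoupled Dirac masses pin the first two coordinates together as $(T,T,0)$ rather than $(T,-T,0)$, and that the frequency $2\log k$ is recorded as the common value $T$.
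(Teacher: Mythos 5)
Your proposal follows the same overall route as the paper: use the Harish--Chandra factorization to collapse $C((12),\lambda)$ to a single rank-one factor depending only on $\lambda_1-\lambda_2$, identify that factor with the scattering matrix $C(\tilde s)=\Omega(2\tilde s-1)/\Omega(2\tilde s)$ of $SL(2,\Z)\backslash\H$ via $\tilde s=\tfrac12(\lambda_1-\lambda_2+1)$, observe that the three-dimensional Fourier transform then degenerates into Dirac masses in the two decoupled directions times the one-dimensional transform of $\Phi(r)=C(\tfrac12+ir)$, and invoke the rank-one result (Guillemin, and Proposition~\autoref{prop:6_1}) that $\mathrm{sing\,supp}\,\hat\Phi=\T$. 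Where you diverge is in how the rank-one statement is justified: the paper simply cites Zelditch for $\mathrm{sing\,supp}\,\hat\Phi=\T$, whereas you unpack it through the Dirichlet series $\zeta(2s-1)/\zeta(2s)=\sum_{k\ge1}\phi(k)k^{-2s}$, reading off the frequencies $2\log k$ and using positivity of $\phi(k)$ to rule out cancellation. That is more explicit and self-contained for the modular surface, but note that the assertion ``$T_k=2\log k$ are exactly these sojourn times'' is itself a geometric fact requiring Guillemin's computation of sojourn times for the geodesics attached to matrices with lower-left entry $c=k$; as written you assert it rather than prove it, so either cite Guillemin's Theorem~\autoref{theorem:2_4} applied to the modular surface for this identification or supply the short hyperbolic-geometry computation. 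You are also right to flag the change-of-variables bookkeeping that pins the support to $(T,T,0)$ rather than $(T,-T,0)$; this is precisely the step in the paper's own displayed computation where the sign conventions deserve the care you describe, so carrying it out cleanly would be a genuine improvement.
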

This result does extend to other $w \neq Id \in S_3$.

\begin{thm}
    Let $\text{X} = SL(3,\Z) \backslash SL(3,\R) /SO(3)$ be the rational rank two locally symmetric space. Let $P_0$ be the minimal parabolic subgroup of $SL(3,\R)$ consisting of upper triangular matrices. 
  
    I) For any $\gamma \in SL(3,\Z)$ such that $\gamma \notin P_0$, there is a continuous family of scattering flats in $X$ parametrized by the space of upper triangular unipotent matrices $N$ in $SL(3,\R)$ and all of these scattering flat have the same \textbf{sojourn vector} which only depends on $\gamma$.
    We further have that any such given family of scattering flats projects onto a family of scattering geodesics into an associated Locally symmetric space which can be naturally identified with $SL(2,\Z) \backslash \H$ with a common sojourn time given by the norm of the sojourn vector.

    II) Denote by the set of such sojourn times of scattering geodesics in $SL(2,\Z) \backslash \H$ as $\T$ and 
    $C(s)$ the unique scattering matrix for $SL(2,\Z) \backslash \H$ associated to the cusp at $\infty$. If $C(w,\lambda)$ denotes one of the rank two scattering matrices associated to $\Gamma \backslash X$ as in Chapter 6 with $\lambda = (\lambda_1,\lambda_2,\lambda_3) \in \C^3$ with $Re(\lambda) >> 0$. Then $C(w,\lambda)  = C(\tilde{s})$, with $\tilde{s}$ depending linearly on
    $\lambda$ and furthermore $C(w,\lambda)$ admits an asymptotic expansion where the terms only depend on the sojourn times $T \in \T$(Theorem \autoref{theorem:9_22}).
\end{thm}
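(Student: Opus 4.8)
The plan is to run both parts through a single reduction: the Bruhat decomposition of $\gamma$ and the Harish--Chandra factorization of $C(w,\lambda)$ each isolate one positive root (for $w=(12)$, the simple root $\alpha_1$), so that the rank-two problem collapses onto the rank-one scattering theory of $SL(2,\Z)\backslash\H$ recalled in Section~5. Thus I would organize the argument so that the geometric object (Part~I) and the analytic object (Part~II) are two faces of the same $SL(2)$ sitting inside $SL(3,\R)$, namely the copy attached to $\alpha_1$ inside the Levi of the maximal parabolic $P$ stabilizing a plane.

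For Part~I, I would first use the Siegel-set reduction theory of $SL(3,\Z)\backslash SL(3,\R)/SO(3)$ to realize the two ends of a prospective scattering flat as the boundary faces attached to $P_0$ and to the opposite minimal parabolic. Writing $\gamma$ in its Bruhat cell $\gamma\in N\,a_\gamma\,w\,N$ and reading off the split part $a_\gamma=\exp(H_\gamma)$ with $H_\gamma\in\a$, I would \emph{define} the sojourn vector to be $H_\gamma$; since the Bruhat $A$-coordinate is canonical, $H_\gamma$ depends only on $\gamma$. For each $n\in N$ I would form the maximal flat $F_n$ obtained as the $\Gamma$-image of the flat $n\,A\cdot o$ positioned by the factorization of $\gamma$, check that $F_n$ is eventually distance minimizing in the two opposite Weyl chambers (hence is a scattering flat), and verify that $n\mapsto F_n$ sweeps out the asserted $N$-family while leaving $H_\gamma$ fixed. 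Finally I would project $F_n$ to the boundary locally symmetric space $S_P$ of the maximal parabolic attached to $\alpha_1$; this $S_P$ is naturally $SL(2,\Z)\backslash\H$, the projected curve is a scattering geodesic between its cusp and itself, and a direct computation of the embedded $SL(2)$ Bruhat $A$-part $\diag(c^{-1},c)$ yields sojourn time $2\log|c|=\|H_\gamma\|$.

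For Part~II, I would invoke the Harish--Chandra / Gindikin--Karpelevich factorization $C(w,\lambda)=\prod_{\alpha>0,\;w\alpha<0}c_\alpha(\lambda)$ from \cite[Chapter~2, Section~5]{HarishChandra1968}. For the length-one element $w=(12)$ the inversion set is $\{\alpha_1\}$, so the product collapses to the single rank-one factor $c_{\alpha_1}(\lambda)$, which I would identify with the $SL(2,\Z)\backslash\H$ scattering matrix $C(s)=\sqrt{\pi}\,\frac{\Gamma(s-\frac12)}{\Gamma(s)}\,\frac{\zeta(2s-1)}{\zeta(2s)}$ under an affine-linear substitution $\tilde s=\tilde s(\lambda)$ (of the form $\tilde s=\tfrac12+c_0\langle\lambda,\alpha_1^\vee\rangle$ for an explicit constant $c_0$ fixed by matching normalizations), giving $C(w,\lambda)=C(\tilde s)$. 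I would then expand the arithmetic factor as the Dirichlet series $\frac{\zeta(2s-1)}{\zeta(2s)}=\sum_{n\ge1}\varphi(n)\,n^{-2s}$, with $\varphi$ the Euler totient, combine it with the Stirling expansion of the $\Gamma$-quotient, and read off, in the regime $\Re\lambda\gg0$ (equivalently $\Re\tilde s>1$), an asymptotic series whose exponential frequencies are exactly $\{2\log n\}$. These are the sojourn times $T\in\T$ of Part~I, the multiplicity $\varphi(n)$ counting the scattering geodesics with $|c|=n$ modulo horocyclic translation, so every term depends only on $\T$, as claimed.

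The analytic Part~II is essentially mechanical once the factorization is granted; the genuine obstacle is the geometry of Part~I in higher rank. Unlike the rank-one case there is no single geodesic but a two-dimensional flat limiting onto cusp faces, so \emph{eventual distance-minimality in both chamber directions} must be established from the precise Siegel-set estimates rather than from a one-variable horocycle computation, and the projection $F_n\mapsto S_P$ must be shown to land in bona fide scattering geodesics with sojourn time $\|H_\gamma\|$. Matching the analytic frequencies $2\log n$ coming from the $\zeta$-quotient to these geometric sojourn times --- the Poisson-type heart of the statement --- is where the two computations have to be forced to agree, and I expect the bookkeeping of normalizations (the constant $c_0$ relating $\langle\lambda,\alpha_1^\vee\rangle$ to $2\log|c|$, and the Killing-form normalization making $\|H_\gamma\|=2\log|c|$) to be the delicate point.
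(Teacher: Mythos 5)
Your proposal is correct in substance and, for Part I, follows essentially the same route as the paper: Bruhat decomposition $\gamma=u_2\gamma_a zwu_1$ producing a common split component of $P_0$ and $\gamma P_0\gamma^{-1}$ (the paper's Proposition \ref{prop:8_1}), the sojourn vector defined as $\log(\gamma_a)$, the $N$-parametrized family, and the projection onto $S_P\cong SL(2,\Z)\backslash\H$ with sojourn time $|\log(\gamma_a)|$ (Theorems \ref{theorem:8_4}, \ref{theorem:8_5}, \ref{theorem:9_14}). Where you genuinely diverge is the last step of Part II. The paper obtains the asymptotic expansion by identifying $C(w,\lambda)=C(\tilde s)$ with $\tilde s=\tfrac12(\lambda_a-\lambda_b+1)$ via the Harish--Chandra factorization and then \emph{quoting} Guillemin's Poisson relation (Theorem \ref{theorem:2_4}), $C(s)=F(s)\sum_{T\in\T}e^{-T(1/2+is)}$, which already packages the sum over sojourn times. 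You instead expand $\zeta(2s-1)/\zeta(2s)=\sum_{n\ge1}\varphi(n)n^{-2s}$ and match the frequencies $2\log n$ (with multiplicity $\varphi(n)$) to the sojourn times. These are two presentations of the same identity: your version makes the arithmetic completely explicit, but it transfers the real work to the geometric claim that the double coset with lower-left entry $c$ yields a scattering geodesic of sojourn time $2\log|c|$, counted $\varphi(c)$ times --- which is exactly the content of the Guillemin theorem the paper invokes, so you would still need to prove or cite it; you correctly flag this matching as the delicate point. Two small cautions: in $SL(3,\Z)\backslash SL(3,\R)/SO(3)$ all minimal rational parabolics are $\Gamma$-conjugate, so the flat scatters from the single Siegel end $M^0$ to itself rather than between the ends of $P_0$ and an ``opposite'' parabolic; and the collapse of the factorization to a single rank-one factor, together with the construction of the intermediate maximal parabolic $P$, uses that the two chambers are \emph{adjacent} (i.e.\ $w$ is the simple reflection $(12)$), not opposite --- for longer $w$ the paper's factorization becomes a product of rank-one factors and the argument must be adapted.
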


\pagebreak

\section{Scattering on Finite Area Hyperbolic Surfaces}

Victor Guillemin was the first one who realized that just as in the case of a compact manifold, there
is an analogous Poisson relation between sojourn times of scattering geodesics and singularities of
the scattering matrices for noncompact hyperbolic surfaces. Here we will review the work of
Guillemin's paper (see \cite{Guillemin1976} for details)

Let $\H = \{z = x + iy | x, y \in R, y > 0\}$ be the upper half plane with the assigned hyperbolic metric
$ds^2 = \frac{dx^2 + dy^2}{y^2}$. The associated Laplacian is given by 
$\Delta = -y^2(\frac{\partial^2}{\partial x^2 } + \frac{\partial^2}{\partial y^2 }) $, since $\H$ is a complete
Riemannian manifold,  the Laplacian has a unique self-adjoint extension which is also be denoted by $\Delta$.
Now consider a cofinite discrete torsion free subgroup $\Gamma$ of PSL(2,$\R$) and let $X = \Gamma \backslash \H$ be the
associated finite area noncompact hyperbolic surface with $k_1, ...,k_n$ inequivalent cusps. One then
knows that, for any sufficiently large ''$a$'' , X is a disjoint union of compact subset $X_a$ and a finite
number of open sets $X_i$ , $i = 1, 2 \dotsc n$ , where $X_i$ is the cusp neighborhood for the corresponding
cusp $k_i$ and so that each $X_i$ is isometric to the set  $\{ -1/2 \leq Re(z) \leq 1/2 | |Im(z) | \geq a \}$ in the upper
half plane. We will fix such an $a$ for the rest of this chapter.

It is a well-known fact that any geodesic in $\H$ is either a half line of the form $Re(z) = k$, with $Im(z) > 0$,
 or a half circle of the form $|z-p | = q$, with $p \in \R$, $q \in \R^{+}$ and $Im(z) > 0$. Let $\pi : \H \longrightarrow X$ be the
canonical projection, then the hyperbolic metric on $\H$ introduces a natural hyperbolic metric on
$X$ with respect to which $\pi$ becomes a local isometry, further we have that geodesics in $\H$ project
to geodesics in $X$ under $\pi$. We are now all set to describe a certain class of geodesics in $X$, that run to infinity in either direction, in the sense that they are not contained in any compact subset of $X$.

\begin{defn}\label{def:3_1}
A geodesic $\gamma(t)$ in $X$ is called a scattering geodesic if it is contained in $X \backslash X_a$ for large positive as
well as negative times $t$. A scattering geodesics that is contained in $X_i $ for $t \ll t_0$ and in $X_j$ for $ t_1 \ll t $ is called a geodesic scattered between cusp ends $X_i$ to $X_j$. The associated \textbf{sojourn time} $T_{\gamma}$
is the total amount of time the geodesic spends in the compact core $X_0$, starting from the first time
it entered $X_0$ until the time when it exits.
\end{defn}

Fixing  an ''$a$'' as before, Guillemin then proved that there are countable number of non-trivial scattering
geodesics running between cusp ends $k_i$ and $k_j$ and computed the sojourn time for such geodesics, we will give a sketch of the proof and the details can be found at \cite{Guillemin1976}.

Start with a fundamental domain of $\Gamma$ in $\H$, such that the cusp $k_i$ is at $\infty$ and the cusp $k_j$ will then be a vertex of the fundamental domain lying on the real axis given by the point $(x_j,0)$ with the cusp neighborhood two geodesics $\sigma_1$ and $\sigma_2$ which are perpendicular to the real axis at $(x_j,0)$.

Now choose a geodesic $\sigma$ in $\H$ joining the cusp at $k_j$ and a point $q$ on the real axis, such that $\sigma$ lies between $\sigma_1$ and $\sigma_2$. (Look at the figures below)	Then for any element $B \in \Gamma$, $B\sigma$ is either a half circle with center on the real axis or a \textbf{half line perpendicular to the X-axis}. In the latter case, the projection of $\sigma $ onto $X$ is going to be a scattering geodesic running between cusp ends from $k_i$ to $k_j$. Note that not every
$B \in \Gamma$ gives rise to such a scattered ray, and for any B there is at most one scattering geodesic,
since such a choice of B forces $q$ to be $B^{-1}(\infty)$. Finally, note that since $\Gamma$ is a discrete subgroup of
$PSL(2,\R)$, it is countable. Hence, the set of $B \in \Gamma$ which give rise to scattering geodesics is also
countable.

\begin{figure}
    \centering
    \includegraphics[width = 1.0\textwidth]{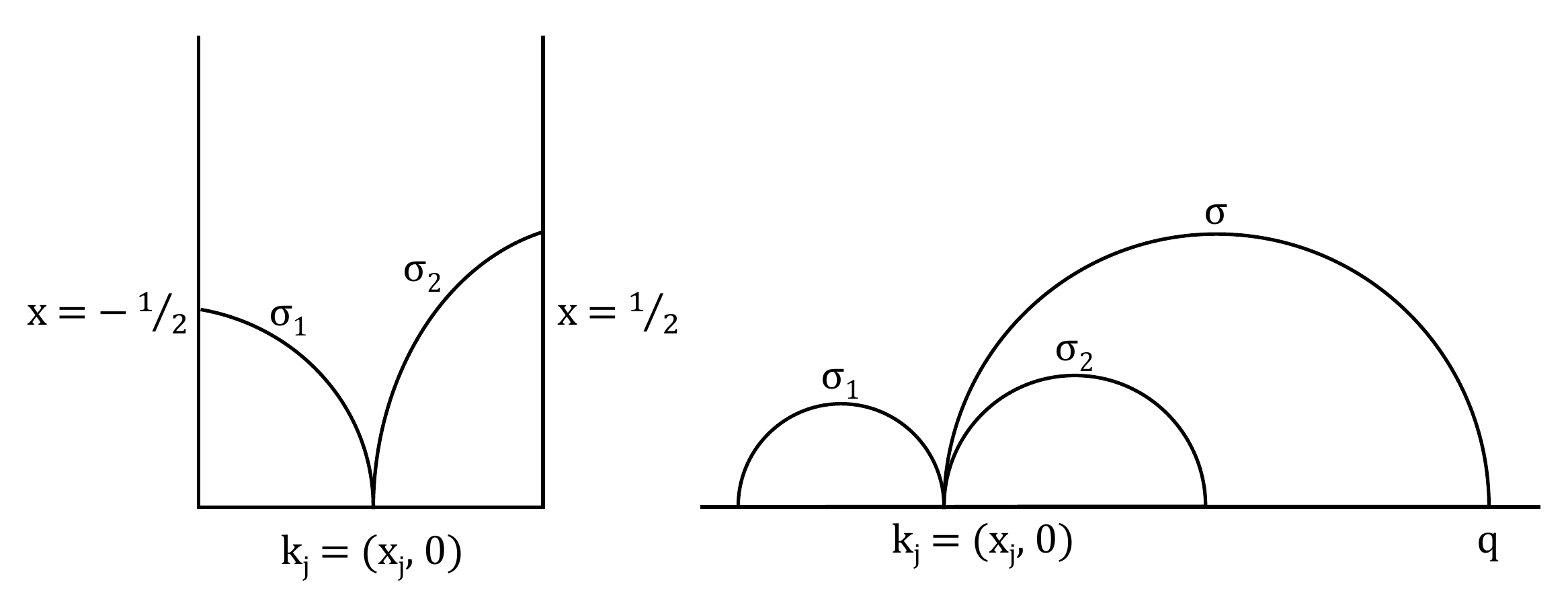}
    \caption{Construction of scattering geodesics.}
\end{figure}

We are now ready to state and prove the main result of Guillemin. As stated previously, the cusp $k_i$  is at $\infty$  and the cusp $k_j$ will then be a vertex of the fundamental domain lying on the real axis given by the point $(x_j,0)$ with the cusp neighborhood bounded by two geodesics $\sigma_1$ and $\sigma_2$ which are perpendicular to the real axis at $(x_j,0)$.  Also, note that as stated earlier if $B \in \Gamma$ gives rise to a scattering geodesic, then the transformations $\gamma B$ also generate the same scattering geodesic in $X$ (according to the procedure given in the previous section) for every $\gamma \in \Gamma _{\infty}$, where $\Gamma _{\infty} \subset \Gamma $  is the subgroup generated by the transformations $z \mapsto z+1$. As before, we choose an isometry $\Psi$ mapping the vertical strip $\{-1/2 \leq Re(z) \leq 1/2 \}$ onto the $j$-th cusp neighborhood such that $\Psi(\infty) = k_j$.

Associated to the $i$-th cusp, we have the \textbf{Eisenstein Series} $E_{\infty}(z,s)$ given by,

\begin{equation}
E_{\infty}(z,s) = \sum_{B \in \Gamma_{\infty} \backslash \Gamma} (Im(Bz))^s
\end{equation}

The Eisenstein Series satisfies the following properties,

\begin{enumerate}
    \item $E_{\infty}(z,s)$ converges uniformly and absolutely on compact subsets of the half plane $Re(s) > 1$ and defines a holomorphic function, as well with a meromorphic continuation to all $s \in \C$ and is regular on $Re(s) = 1/2$.

    \item $E_{\infty}(\gamma z,s)  =E_{\infty}(z,s) $ for all $\gamma \in \Gamma$ .

    \item As a function of $z$, $E_{\infty}(z,s) $ is smooth and denoting by $\Delta_z$ the hyperbolic Laplacian in the $z$-variable, $E_{\infty}(z,s) $satisfies the equation $(\Delta_z -s(1-s))E_{\infty}(z,s)  = 0$.
\end{enumerate}

Now we set $s = 1/2 +i \tau$, and let $E(z, \tau) = E_{\infty}(z,1/2+i \tau) $. Then observe that the zero-th Fourier coefficient in the expansion of $E(z,\tau)$ in the $j$-th cusp neighborhood is given by the integral,

\begin{equation*}
    \int_{-1/2}^{1/2} E(\Psi z,\tau) dx 
\end{equation*}

In their work, Lax and Phillips \cite[Chapter 8]{Lax1977} showed that:

\begin{equation}
    e^{-2i \tau ln(a)}\int_{-1/2}^{1/2} E(\Psi z,\tau) dx  = C_{ij}(\tau) y^{1/2 -i \tau}
\end{equation}

Where, $C_{ij}(\tau) $ is the $ij$-th entry of the scattering matrix,this scattering matrix contains important
information about the geometry of the surface X, in particular the determinant of scattering matrix
shows up in the Selberg trace formula for finite area noncompact hyperbolic surfaces with cusps.

We are now ready to state the main result of Guillemin.

\begin{thm}\label{theorem:2_4}\cite[Theorem 3]{Guillemin1976}
Let $\T_{ij}$ be the set of sojourn times for geodesics in $\Gamma \backslash \H$ that are scattered from the i-th cusp neighborhood to the j-th cusp neighborhood.  Define the following integral,

\begin{equation}
F(\tau) = \int_{-\infty}^{\infty} (1+w^2)^{-(1/2+i \tau) } dw
\end{equation}
Then for $Im(\tau) \leq -3/2$, one has

\begin{equation}
C_{ij}(\tau)  = aF(\tau) \sum_{T_{\sigma} \in \T_{ij}}e^{-T_{\sigma}(1/2+i \tau)}
\end{equation}
For a general $\tau$, the right-hand side is supposed to be the meromorphic continuation of this series
\end{thm}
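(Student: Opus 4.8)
The plan is to compute the constant (zeroth Fourier) coefficient of $E_\infty(\Psi z,\tau)$ directly from the defining series and to match each surviving term with a scattering geodesic. First I would insert the series $E_\infty(z,s)=\sum_{B\in\Gamma_\infty\backslash\Gamma}(\Im Bz)^{s}$ into $\int_{-1/2}^{1/2}E_\infty(\Psi z,\tau)\,dx$ and interchange summation with integration, which is legitimate once $\Re(s)>1$. Because $\Psi$ conjugates the stabilizer $\Gamma_{k_j}$ of the cusp $k_j$ onto $\Gamma_\infty$, the strip $-1/2\le\Re z\le1/2$ is a fundamental domain for the right action of $\Psi^{-1}\Gamma_{k_j}\Psi$, so integrating over it unfolds that right action and collapses the sum to one over the double cosets $\Gamma_\infty\backslash\Gamma/\Gamma_{k_j}$, each contributing an integral of $(\Im Az)^{s}$ over the whole line, with $A=B\Psi$ a chosen representative. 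This is precisely the bookkeeping that realizes Guillemin's earlier correspondence: the double cosets whose representative $A$ has nonzero lower-left entry $\gamma$ are exactly those $B$ producing a vertical lift, i.e. a scattering geodesic $\sigma$ running from $k_i$ to $k_j$, whereas the cosets with $\gamma=0$ occur only when $i=j$ and contribute the $\delta_{ij}y^{s}$ term, which does not affect the coefficient of $y^{1-s}$.

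Next I would evaluate the unfolded integral for a single representative $A$ with lower-left entry $\gamma\ne0$. Writing $\Im(A(x+iy))=y/\bigl((\gamma x+\delta)^{2}+\gamma^{2}y^{2}\bigr)$ and substituting first $u=\gamma x+\delta$ and then $u=|\gamma|yv$ collapses the integral to
\[
\int_{-\infty}^{\infty}(\Im A(x+iy))^{s}\,dx=\frac{y^{1-s}}{|\gamma|^{2s}}\int_{-\infty}^{\infty}(1+v^{2})^{-s}\,dv=F(\tau)\,\frac{y^{1-s}}{|\gamma|^{2s}},
\]
which is the origin of the universal factor $F(\tau)$. Summing over the double cosets and reading off the coefficient of $y^{1-s}=y^{1/2-i\tau}$ gives $\int_{-1/2}^{1/2}E(\Psi z,\tau)\,dx=F(\tau)\,y^{1/2-i\tau}\sum_{\sigma}|\gamma_\sigma|^{-2s}$, and after the normalization $C_{ij}(\tau)=e^{-2i\tau\ln a}F(\tau)\sum_{\sigma}|\gamma_\sigma|^{-2s}$, where $\gamma_\sigma$ denotes the lower-left entry attached to the geodesic $\sigma$.

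The geometric core is to identify $|\gamma_\sigma|^{-2s}$ with $e^{-T_\sigma s}$. The lift of the scattering geodesic attached to $A$ is the vertical line through $\Re z=A(\infty)$; it leaves the cusp-$i$ horoball at height $a$ and enters the cusp-$j$ horoball $A(\{\Im z>a\})$ at its highest point. A direct computation with the action of $A$ shows this image horoball is tangent to the real axis with Euclidean diameter $d=1/(\gamma^{2}a)$, so the part of the geodesic inside the compact core has hyperbolic length
\[
T_\sigma=\int_{d}^{a}\frac{dy}{y}=\ln\frac{a}{d}=2\ln(a|\gamma|).
\]
Hence $e^{-T_\sigma s}=(a|\gamma_\sigma|)^{-2s}$, equivalently $|\gamma_\sigma|^{-2s}=a\,e^{2i\tau\ln a}\,e^{-T_\sigma s}$; substituting this into the expression for $C_{ij}(\tau)$ cancels the normalizing factor $e^{-2i\tau\ln a}$ and leaves the overall constant $a$, giving $C_{ij}(\tau)=aF(\tau)\sum_{T_\sigma\in\T_{ij}}e^{-T_\sigma(1/2+i\tau)}$.

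I would close by handling convergence and continuation: the hypothesis $\Im(\tau)\le-3/2$ forces $\Re(s)=1/2-\Im(\tau)\ge2>1$, which justifies the absolute convergence of the Eisenstein series, the term-by-term unfolding, and the convergence of $\sum_\sigma|\gamma_\sigma|^{-2s}$, while for general $\tau$ the identity is read as the meromorphic continuation of both sides. The step I expect to be the main obstacle is not the integral evaluation but the bijection in the first paragraph: verifying that the double cosets $\Gamma_\infty\backslash\Gamma/\Gamma_{k_j}$ with $\gamma\ne0$ correspond exactly, and with the right multiplicity, to the scattering geodesics from $k_i$ to $k_j$, and that $|\gamma|$ (hence $T_\sigma$) is genuinely an invariant of the double coset, independent of the choice of representative modulo $\Gamma_\infty$ on the left and $\Gamma_{k_j}$ on the right. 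This is the content of Guillemin's remarks that $q$ is forced to be $B^{-1}(\infty)$ and that $\gamma B$ produces the same geodesic.
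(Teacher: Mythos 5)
Your proposal is correct and follows essentially the same route as the paper, which does not reproduce a proof but defers to Guillemin's original argument: unfold the constant term of the Eisenstein series at the $j$-th cusp over double cosets, evaluate the resulting integral to produce the universal factor $F(\tau)$ and the Dirichlet-type sum $\sum_\sigma |\gamma_\sigma|^{-2s}$, and convert $|\gamma_\sigma|^{-2s}$ into $a\,e^{2i\tau\ln a}e^{-T_\sigma s}$ via the horoball tangency computation $T_\sigma = 2\ln(a|\gamma_\sigma|)$. Your identification of the double-coset/scattering-geodesic bijection as the delicate point matches the paper's own emphasis on the facts that $q$ is forced to equal $B^{-1}(\infty)$ and that left multiplication by $\Gamma_\infty$ yields the same geodesic.
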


\pagebreak
\section{Locally Symmetric Spaces and their Geometry at Infinity}
\label{symspacegeometry}

Let $\mathbf{G}$ be a complex semisimple linear algebraic group. Denote by $G$ the real locus $\mathbf{G}(\R)$ of $\mathbf{G}$, then $G$ is a real semisimple lie group with finitely many connected components. Choose a maximal compact subgroup $K$ of $G$. (Note that all such maximal compact subgroups are conjugate to each other). Then the associated symmetric space $X = G \backslash K$
is a negatively curved Riemannian manifold which admits a $G$-action as well as a  $G$-invariant Riemannian metric, which we now define.

Let $\g$ be the Lie algebra of $G$, then $\g$ admits a Cartan decomposition $\g = \k \oplus \p$, where $\k$ is the Lie algebra of $K$. If $B(\bullet, \bullet)$ denotes the associated Killing form of $\g$, then it is known that the restriction of $B$ to $\k$ is negative definite and its restriction to $\p$ is positive definite.

Now let's fix the base point $x_0 \in X$ given by the coset $eK$, where $e \in G$ is the identity element. Then their is a natural identification of $T_{x_0}X $ with $\p$ as vector spaces and the restriction of the Killing form $B(\bullet, \bullet)$ to $\p$ defines a positive definite inner product on $T_{x_0}X$. for any other point $x \in X$, one can define a $G$ -invariant inner product with varies smoothly, and is essentially obtained by recognizing that tangent spaces at different points of $X$ are isomorphic with the isomorphism given by a group action of $G$.

Now let $\Gamma$ be  an arithmetic subgroup of $\mathbf{G}$ with finite covolume, for technical reasons we will assume that $\Gamma$ is neat, i.e. for any embedding of $\Gamma$ into $SL(n,\Z) $, an  arbitrary element $\gamma \in \Gamma$  has no roots of unity as an eigenvalue except 1. Such a neat subgroup $\Gamma$ always admits a torsion free subgroup, so without loss of generality we can assume that 
$\Gamma$ is torsion free. 

Define the Riemannian manifold $S = \Gamma \backslash X = \Gamma \backslash \ G / K$.  Then, $S$ is a locally symmetric space of finite volume. When $S$ is noncompact(which is the case we will focus on), it has both discrete and continuous spectrum, the latter characterized by Eisenstein series and a certain set of intertwining operators called the \textbf{scattering matrices}.

  \subsection {Rational Langlands Decomposition}
Let $\mathbf{P}$ be a rational parabolic subgroup of $\mathbf{G}$, and let $P$ be the real  locus $\mathbf{P}(\R)$.  Then $X$ has a natural horospherical decomposition associated to $P$, which we describe now.

Let  $\mathbf{N}_P$ the unipotent radical of $\mathbf{P}$, define  
 $N_P = \mathbf{N}_P(\R)$, further note that $\mathbf{H}_P$ (called the Levi quotient)  given by $\mathbf{P} / \mathbf{N}_P$ is an algebraic group defined over $\Q$, let $\mathbf{C}_P$  be the split center of $\mathbf{H}_P$ defined over $\Q$ and further define $A_P$ to be the connected component of identity in the real locus of $\mathbf{C}_P$. Finally, let $\mathbf{H}_P^* $ denote the set of rational characters of $\mathbf{H}_P$, and define the reductive group $\mathbf{M}_P = \cap _{\eta \in \mathbf{H}_P^*}$ Ker $\alpha ^2$ and define $M_P = \mathbf{M}_P(\R)$. The subgroup $A_P$ will be called a rational split component of the parabolic subgroup $\mathbf{P}$ and  denote by $\Sigma^+ (P,A_P) $ a set of positive roots  (with respect to some ordering) corresponding to the adjoint action of $A_P$ of $N_P$.

\begin{prop}\label{prop:3_1}
Let $P$ be the real locus of a rational parabolic subgroup $\mathbf{P}$ of $\mathbf{G}$, then $P$ admits a rational Langlands decomposition given by the diffeomorphism,

\begin{equation}
M_P \times A_P \times N_p  \simeq P \hspace{0.1cm}, \hspace{0.2cm} (m,a,n) \mapsto man 
\end{equation}

\end{prop}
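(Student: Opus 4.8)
The plan is to factor the claimed multiplication map as a composition of two independent diffeomorphisms: first the algebraic Levi decomposition of $\mathbf{P}$, which peels off the unipotent radical, and second the splitting of the reductive Levi factor into its $\Q$-split central part $A_P$ and the anisotropic-modulo-center part $M_P$. Both constituent maps are morphisms of algebraic groups defined over $\Q$ possessing algebraic inverses, so on real loci they are automatically diffeomorphisms; the real work is therefore to verify the two group-theoretic splittings.

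For the first step I would invoke the Levi decomposition for parabolic subgroups in characteristic zero (Borel--Tits): $\mathbf{P} = \mathbf{L}_P \ltimes \mathbf{N}_P$, where the Levi subgroup $\mathbf{L}_P$ is a $\Q$-subgroup carried isomorphically onto the Levi quotient $\mathbf{H}_P = \mathbf{P}/\mathbf{N}_P$ by the canonical projection. Passing to real points, $N_P$ is a simply connected nilpotent Lie group on which $\exp \colon \mathfrak{n}_P \to N_P$ is a diffeomorphism, and the semidirect-product structure makes multiplication $L_P \times N_P \to P$, $(l,n)\mapsto ln$, a diffeomorphism, its inverse being the algebraic projection onto the two factors. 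This reduces the proposition to showing that $(m,a)\mapsto ma$ is a diffeomorphism $M_P \times A_P \to H_P \cong L_P$.

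For the second step I identify $A_P$ as the identity component $\mathbf{C}_P(\R)^0 \cong (\R_{>0})^{\dim A_P}$ of the maximal $\Q$-split central torus and $M_P$ as the real locus of $\bigcap_{\eta \in \mathbf{H}_P^*}\ker \eta^2$. Since $A_P$ is central the map $(m,a)\mapsto ma$ is a homomorphism; its kernel is trivial because any $a\in A_P\cap M_P$ satisfies $\eta(a)^2=1$ with $\eta(a)>0$, hence $\eta(a)=1$ for a finite-index subgroup of the rational characters, which separate the points of the connected torus $A_P$ and force $a=e$. On Lie algebras one has the direct sum $\mathfrak{h}_P = \mathfrak{a}_P \oplus \mathfrak{m}_P$, so the differential at the identity is an isomorphism and the map is a local diffeomorphism onto an open subgroup; the use of $\eta^2$ rather than $\eta$ is precisely what guarantees that $M_P$ meets every connected component of $H_P$, so that $M_P \cdot A_P = H_P$ and the map is surjective. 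Composing the two diffeomorphisms yields the asserted $M_P \times A_P \times N_P \simeq P$.

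The main obstacle I anticipate lies entirely in the second step: the component bookkeeping for the reductive group $H_P$. Establishing the Lie-algebra direct sum and the trivial intersection is routine, but verifying that $M_P A_P$ exhausts the possibly disconnected real reductive group $H_P(\R)$ --- that the contractible connected group $A_P$ together with $M_P$ surjects onto every component --- is the delicate point, and it is exactly here that the definition of $\mathbf{M}_P$ through the squares $\eta^2$ of the rational characters is essential. I would settle this by appealing to the standard real reduction theory for reductive $\Q$-groups (Borel's arithmetic-groups framework, as developed in the compactification literature of Borel and Ji), where this direct-product decomposition of $H_P(\R)$ is established in the form needed here.
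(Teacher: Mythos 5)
Your argument is correct, and it supplies a proof that the paper itself omits: Proposition~\ref{prop:3_1} is stated as standard background with no proof given (it is the rational Langlands decomposition as in Borel--Serre and in \cite{Ji2002}), so there is no competing route to compare against. Your two-step factorization --- the Borel--Tits Levi decomposition $\mathbf{P}=\mathbf{L}_P\ltimes\mathbf{N}_P$ over $\Q$, followed by the splitting $H_P = M_P\cdot A_P$ of the reductive real group into its $\Q$-anisotropic-modulo-center part and the identity component of the split central torus --- is exactly the standard proof, and you correctly isolate the two points that need checking: triviality of $A_P\cap M_P$ (where positivity of characters on the connected torus does the work) and surjectivity of $M_P\times A_P\to H_P$ (where the definition of $\mathbf{M}_P$ via the squares of rational characters, written with a typo as $\ker\alpha^2$ in the paper but clearly meaning $\ker\eta^2$, guarantees that $M_P$ meets every component). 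One small point worth making explicit: as the paper defines them, $\mathbf{M}_P$ and $\mathbf{C}_P$ live in the Levi \emph{quotient} $\mathbf{H}_P=\mathbf{P}/\mathbf{N}_P$, so for the product $man$ to be an element of $P$ one must fix a lift of $M_P$ and $A_P$ into $P$ via a Levi section (equivalently, via the basepoint $x_0=eK$); your identification $\mathbf{L}_P\cong\mathbf{H}_P$ handles this, but it deserves a sentence since the decomposition genuinely depends on that choice. With that caveat, appealing to the reduction-theory literature for the surjectivity step is entirely appropriate at the level of rigor of this section.
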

 
 Note that, the dimension of $A_P$ is called the $\Q$-rank of $\mathbf{P}$, the {$\Q$-rank of $\Gamma \backslash X$} is the maximum possible $\Q$-rank of a rational parabolic subgroup $\mathbf{P}$ of $\mathbf{G}$.

 We now discuss the associated Horospherical decomposition  for $S$. Note that, since $P$ acts transitively on $X = G /K$, the Langlands decomposition of $P$ described in the above theorem gives rise to a splitting of the symmetric space $X \simeq M_P /(K  \cap P) \times A_P \times N_P$, this is proved using the fact that $A_P$ and $M_P$  commute along with the fact that $K \cap P$ is a maximal compact subgroup of $M_P$. We denote by $X^P$ the manifold $M_P  /(K  \cap P) $, which is   the boundary symmetric space of $X$ associated to the rational parabolic subgroup P.
 
 Suppose $\a_P$ is the Lie algebra of $A_P$, along with the diffeomorphic  exponential mapping $exp_P: \a_P \rightarrow A_P$. Then any $x \in X$ can be represented as $x = (z,exp_P(H),w) \in X^P \times A_P \times N_P$ and $H \in \a_P$.

 Under the natural projection map $\mathbf{P} \rightarrow \mathbf{H}_P$, the subgroup $\Gamma \cap P$ is mapped to an arithmetic subgroup of $\mathbf{H}_P$ which is in fact contained in $\mathbf{M}_P$, denote this image by $\Gamma^P$. Then, $\Gamma^P$ acts properly discontinuously on $X^P$ and the quotient has finite volume and is a manifold due to the fact that $\Gamma$ is assumed to be neat which results in $\Gamma^P$ being torsion free.  The resulting locally symmetric space $S_P = \Gamma^P \backslash X^P$ is the associated boundary component  of $S$ corresponding to the rational parabolic subgroup $\mathbf{P}$.

 \subsection{Reduction theory of Locally Symmetric Spaces}
 
 Since $S$ is assumed to be noncompact it has a natural decomposition into a compact core and a finite number of noncompact Siegel sets which are build out of and are in one-one correspondence with a set of representatives of $\Gamma$-conjugacy classes of rational parabolic subgroups of $G$.
 
 Let $Q_1,Q_2,...,Q_m$ be the set of representatives of $\Gamma$-conjugacy classes of $\Q$-rank one rational parabolic subgroups of $\mathbf{G}$. Denote by $\a_i$ the Lie algebra of the rational split component $A_{Q_i}$ of $Q_i$ and introduce the direct sum $\mathcal{A} = \displaystyle \oplus_{i=1}^m \a_i$. Then for any rational parabolic subgroup $Q$ of $\mathbf{G}$,  there is a well-defined map
 $\Psi_Q: \Aat \longrightarrow A_Q$, such that for $Q = Q_i$, $\Psi_{Q_i}$ is the projection map onto the i-th component $\Aat$ which is $\a_i$.

 Let $\tau_i$ be half the sum of the roots in $\Sigma(Q_i,A_{Q_i})$ with multiplicity. There is a unique element $T_i \in \a_i$ corresponding to $\tau_i$ obtained by using the Killing form. Using the map above,  one can show the existence of a unique $T \in \Aat$ such that $\Psi_{Q_i}(T) = T_i$.
 
 For $r >> 0$ and $T_r = rT$, one then defines a shift of the positive Weyl chamber $\a_Q^{+}(T_r)$ (defined with respect to $\Sigma^+(Q,A_Q)$)  given by,
 
\begin{equation}
\a_Q^+(T_r) = \{h \in \a_Q \hspace{0.1cm} |  \hspace{0.1cm}  \beta(h) > \beta (\Psi_Q(T))  \hspace{0.1cm}  \forall \beta \in \Sigma^+(Q,A_Q)   \hspace{0.1cm} \}
\end{equation}
 also define ,
 
 \begin{equation}
A_{Q,+}(T_r) = \{exp_Q(h) \in A_Q \hspace{0.1cm} |  \hspace{0.1cm}  \langle \Psi_Q(T) -h , w \rangle \geq 0  ,   \hspace{0.1cm} \forall w \in \a_Q^{+}, h \in \a_Q \}
\end{equation}
Where, $\a_Q^{+}$ is the positive Weyl chamber in $\a_Q$ with respect to the set of positive roots $\Sigma(Q,A_Q)$.

 Denote by $X(T_r)$ the intersection $\displaystyle \cap_{Q} X_Q \times A_{Q,+}(T_r) \times N_Q$ over all proper rational parabolic subgroups of $\mathbf{G}$. Then $X(T_r)$ is a $\Gamma$-invariant submanifold  $X$ with corners of same dimension that of $X$.  Denote by $X_r  = \Gamma \backslash X(T_r)$.

 We are now ready to state the main result of the reduction theory of $S$.
 \begin{prop}\cite[Proposition 3.2.2]{Ji2002}\label{prop:3_5}
 Let $\mathbf{R}_1,\mathbf{R}_2,...,\mathbf{R}_d$ be a set of representatives of $\Gamma$-conjugacy classes proper rational parabolic subgroups of $\mathbf{G}$. Then, for every $1 \leq j \leq d$ there exists left $N_{R_j}$-invariant compact submanifold
 $\Omega_j$ with corners  in $X_{R_j} \times \Gamma_{R_j} \backslash N_{R_j}$ such that for $T_r = rT$ with $r >> 0$  the subset $\Omega_j \times exp_{R_j}(\a^+_{R_j}(T_r) ) \subset (\Gamma \cap R_j) \backslash X$ maps injectively into $\Gamma \backslash X$, denoting the image by still $\Omega_j \times exp_{R_j}(\a^+_{R_j}(T_r) )$, we have the following disjoint decomposition.
 
 \begin{equation}
     \Gamma \backslash X  = X_r \cup \bigsqcup _{j=1}^d \Omega_j \times exp_{R_j}(\a^+_{R_j}(T_r) )
 \end{equation}
 \end{prop}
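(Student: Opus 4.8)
The plan is to recognize this as the statement of \emph{precise reduction theory} and to assemble it from three classical ingredients: the Borel--Harish-Chandra finiteness theorem for $\Gamma$-conjugacy classes of rational parabolic subgroups, the construction of Siegel sets in the horospherical coordinates of Proposition~\ref{prop:3_1}, and the Siegel separation property that forces the pieces to become disjoint once the shift $T_r$ is pushed far enough into each cusp. First I would fix, for each $R_j$, the horospherical splitting $X \simeq X_{R_j} \times A_{R_j} \times N_{R_j}$ and choose $\Omega_j$ to be a compact fundamental set (with corners, reflecting the analogous reduction carried out on the boundary component $S_{R_j}$) for the action of $\Gamma \cap R_j$ on the two factors $X_{R_j} \times N_{R_j}$. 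Because $\Gamma^{R_j}$ acts properly discontinuously on $X_{R_j}$ with finite-volume quotient and $\Gamma \cap N_{R_j}$ is a cocompact lattice in $N_{R_j}$, such an $\Omega_j$ exists and may be taken with full nilmanifold fibers $(\Gamma \cap N_{R_j}) \backslash N_{R_j}$, which is what the left $N_{R_j}$-invariance amounts to after passing to the quotient. The split component $A_{R_j}$ is left untouched by $\Gamma \cap R_j$, so the product $\Omega_j \times \exp_{R_j}(\a^+_{R_j}(T_r))$ is well defined inside $(\Gamma \cap R_j) \backslash X$.

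Second, there are three assertions to verify: injectivity of each piece into $\Gamma \backslash X$, disjointness across distinct pieces, and exhaustion. Injectivity and disjointness both reduce to the \textbf{Siegel separation property}: for any two (possibly equal) indices $j, j'$ the set $\{\gamma \in \Gamma \mid \gamma\, \mathfrak{S}_j \cap \mathfrak{S}_{j'} \neq \emptyset\}$ is finite, where $\mathfrak{S}_j = \Omega_j \times \exp_{R_j}(\a^+_{R_j}(T_r))$. Granting this, the role of the shift is decisive: the definition of $\a^+_{R_j}(T_r)$ via the condition $\beta(h) > \beta(\Psi_{R_j}(T))$ for all $\beta \in \Sigma^+(R_j, A_{R_j})$, with $T$ built from the half-sums of roots, is exactly what makes the finitely many offending $\gamma$ disappear once $r \gg 0$; the only surviving identifications on $\mathfrak{S}_j$ are those of $\Gamma \cap R_j$, which have already been quotiented out in forming $\Omega_j$. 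This yields both that $\mathfrak{S}_j$ injects and that $\gamma \mathfrak{S}_j \cap \mathfrak{S}_{j'} = \emptyset$ in $\Gamma \backslash X$ for $j \neq j'$.

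The remaining, and I expect hardest, step is exhaustion: showing $\Gamma \backslash X = X_r \cup \bigsqcup_{j=1}^d \mathfrak{S}_j$. Here one argues that a point $x \in X$ whose $A_Q$-coordinate stays within the shift for \emph{every} proper rational parabolic $Q$ lies, by the definition of $X(T_r)$ as the intersection $\cap_Q X_Q \times A_{Q,+}(T_r) \times N_Q$, in the compact core $X_r$; whereas if some $A_Q$-coordinate exits the shifted chamber, then $x$ is deep in the cusp of a well-defined maximal such $Q$, which after $\Gamma$-translation is one of the representatives $R_j$, placing $x$ in the corresponding $\mathfrak{S}_j$. The compatibility of the two descriptions---that the ``not deep in any cusp'' region is precisely $X(T_r)$ and that the maximal-cusp assignment is unique---is where the half-sum-of-roots choice of $T$ and the geodesic-action/Langlands geometry do the real work, and is the crux of the argument (it is precisely the content of \cite[Proposition~3.2.2]{Ji2002}, which one may also simply invoke wholesale). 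Finally, transferring the $X$-level decomposition to the quotient $\Gamma \backslash X$ and verifying that $\Omega_j$ can be arranged as a manifold with corners completes the proof, the deepest single input throughout being the Siegel separation property that underlies both injectivity and disjointness.
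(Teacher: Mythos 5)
The paper offers no proof of this proposition at all---it is quoted directly from \cite[Proposition 3.2.2]{Ji2002}---so there is no in-paper argument to measure your sketch against, and your closing option of ``invoking the citation wholesale'' is in fact exactly what the paper does. That said, your outline assembles the correct standard ingredients (finiteness of $\Gamma$-conjugacy classes of rational parabolics, Siegel sets in the horospherical coordinates of Proposition \autoref{prop:3_1}, the Siegel separation property killing the finitely many offending $\gamma$ once $r \gg 0$, and exhaustion), which is how the cited source proceeds, and you rightly flag exhaustion as the crux. The one point your sketch underplays is that in higher $\Q$-rank a single point can lie deep in the cusp directions of several nested parabolic subgroups simultaneously (the ``corners''), so the existence of a \emph{well-defined} parabolic $Q$ assigning $x$ to a unique piece $\mathfrak{S}_j$ is exactly what the complementary pair of shifted regions $\a^+_{Q}(T_r)$ and $A_{Q,+}(T_r)$ introduced in Section \ref{symspacegeometry} is designed to arbitrate; the disjointness of the union is earned there, not only by Siegel separation, and that is the part of \cite[Proposition 3.2.2]{Ji2002} that cannot be waved through.
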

 
 For future reference, we will call $\Omega_j \times exp_{R_j}(\a^+_{R_j}(T_r) )$ the Siegel end of $\Gamma \backslash X$ associated to the parabolic subgroup $\mathbf{R}_j$(and all of its $\Gamma$ conjugates).

\section{Spectral theory of Locally symmetric spaces}
\label{specdecomp}

In this section we will review the decomposition of the space $L^2( \Gamma \backslash X)$  under the action of the Laplace operator $\Delta$. (There is a corresponding decomposition that can be shown  to be invariant under all $G$-invariant differential operators, but we will focus on the Laplacian $\Delta$).

Since the space $X$ is noncompact with finite volume, it has both discrete as well as continuous spectrum, the continuous spectrum is  parametrized by the so-called \textbf{Eisenstein Series} which we will describe in this section.

In general, depending on the rational rank of a rational parabolic subgroup for $G$, there are multiple such Eisenstein series associated to the locally symmetric space $X$ which we will now describe.

The general procedure for constructing Eisenstein is as follows, let $Q$ be a rational parabolic subgroup of 
$G$ with the associated Langlands decomposition $Q = M_Q \times A_Q \times N_Q$, where $A_Q$ is the associated split component of $Q$, with Lie algebra $\a_Q$ and the associated exponential map $exp_Q : \a_Q \longrightarrow A_Q$. Further, let $X^Q$ be the associated boundary  symmetric space given by

$M_Q / (K \cap M_Q)$  with $\pi_Q : X\longrightarrow X^Q$ be the projection map coming from the associated horospherical decomposition. Further, under the horospherical decompsition denote a point $x \in X$
as $(n_x,exp_Q(H_Q(x)),z_x)$ with $n_x \in N_Q$, $z_x \in X^Q$ and $H_Q(x) \in \a_Q$. Finally, let $\tau_Q$ be half the sum of positive roots corresponding to the adjoint action of $\a_Q$ on $n_Q = Lie(N_Q)$.  

Now choose an $L^2$-eigenfunction $\psi$ of the Laplacian on the associated boundary locally symmetric space $S_Q = \Gamma_Q  \backslash X^Q$ and choose $\lambda \in \a_Q \otimes_{\R} \C$ such that
$Re(\lambda) \in \tau_Q+ \{\mu \in \a_Q^* | \langle \mu , \beta \rangle > 0 \vspace{0.1cm} \forall \beta \in \Sigma^{++}(Q,A_Q)\}$. Then we define the Eisenstein Series, $E_Q(x,\lambda,\psi)$, a smooth $\Gamma$ invariant function on $X$, which will then descend to a smooth function on the space $\Gamma \backslash X$.

\begin{equation}
E_Q(x,\lambda,\psi) = \sum_{\gamma \in (\Gamma \cap P) \backslash \Gamma} e^{(\tau_Q+\lambda)(H_Q(\gamma x))}\psi(\pi_Q(\gamma x)) 
\end{equation}

 If we have another parabolic subgroup $Q'$(not necessarily different from $Q$) with the associated Langlands decomposition 
 $Q' = M_{Q' } \times A_{Q'} \times N_{Q'}$, then the restriction of $E_Q(x,\lambda,\psi)$ along the parabolic subgroups $Q'$ is given by the integral.

 \begin{equation}
 E_{Q|Q'} (x,\lambda , \psi) = \int_{(\Gamma \cap N_{Q'}) \backslash N_{Q'}} E_{Q}(nx,\lambda,\psi) dn
 \end{equation}
 Where, $dn$ is a normalized Haar measure on $N_{Q'}$ so that the total volume of $N_{Q'}$ is equal to one.
 One of the main results of Langlands~\cite{Langlands1976} is the study of the restriction of Eisenstein Series along various parabolic subgroups of $G$. There are essentially two separate cases to be considered.

\begin{itemize}
 \item[$\bullet$] If $rank(Q) \geq rank(Q')$, and the two parabolic subgroups are not associate, then a result of Langlands says that $E_{Q|Q'} (x,\lambda , \psi)  = 0$.
 
 \item[$\bullet$] If the two parabolic subgroups are associate, then let  $S(Q,Q')$ denote the maps of the form $Ad(y)$,$y \in \mathbf{G}_{\Q}$  such that $Ad(y)A_Q = A_{Q'}$. Then the restriction of $Ad(y)$ to $\aa_Q$ will still be denoted by $Ad(y)$. By a theorem of Langlands \cite[Theorem 5, Chapter 2]{HarishChandra1968} , 
 $E_{Q|Q'} (x,\lambda , \psi) $ is given by the following sum.
  \end{itemize}
 \begin{equation}
 E_{Q|Q'} (x,\lambda , \psi)   = \displaystyle \sum_{s \in S(Q,Q')} e^{(\tau_{Q'}+s\lambda)H}(C_{Q|Q'}^{\mu}(s,\lambda)\psi)(\pi_{Q'}(x))
 \end{equation}
 
 Where, $x \in X$ is be represented as $x = (z,exp_{Q'}(H),w) \in M_{Q'} \times A_{Q'} \times N_{Q'}$ and $H \in \a_{Q'}$. The maps $C_{Q|Q'}^{\mu}(s,\lambda)$ are a set of meromorphic maps depending on the parameter $\lambda$, and mapping $\psi \in L^2(S_Q)$ (with eigenvalue $\mu$) to 
 $C_{Q|Q'}^{\mu}(s,\lambda)(\psi)  = \psi_1 \in L^2(S_{Q'})$. Note that, $\psi_1$ also ends up being an eigenfunction for the Laplacian, however this is a non-trivial fact and requires the knowledge that $\psi$ is actually a joint eigenfunction of all the $G$-invariant differential operators acting on $L^2 (\Gamma \backslash X)$.

Note that the scattering matrices $C_{Q|Q'}^{\mu}(s,\lambda)$
 clearly depend on the choices for the split component of $Q$ and $Q'$ respectively. So, anytime we talk about such scattering matrices they would be with respect to preassigned split components. The first thing we want to understand is how these matrices change if we choose a different pair of split components. We proceed as follows,

 We will be working with associate pairs of rational parabolic subgroups $\boldsymbol{Q},\boldsymbol{Q}'$ of $\boldsymbol{G}$ with the real locus being $Q,Q'$ and preassigned split components $A_Q,A_{Q'}$ respectively, choose $\gamma_1, \gamma_2  \in 
 \Gamma$ along with $p \in Q$ and $p' \in Q'$. Define parabolic subgroups $Q_1  = \gamma_1 Q \gamma_{1}^{-1}$ and $Q_2  = \gamma_{2} Q' \gamma_{2}^{-1}$, with the associates split component $A_{Q_1} = \gamma_1 A_Q \gamma_1^{-1}$ and 
$A_{Q_2} = \gamma_2 A_{Q'} \gamma_2^{-1}$ respectively. The following propositions summarize the transfomation of scattering matrices 

\begin{prop}\label{prop:4_4}
For $s \in S(Q,Q')$ and $\lambda \in \a_Q \otimes_{\R} \C$ with $Re(\lambda) >> 0$.
 \begin{equation}
     C^{\mu}_{Q_1|Q_2}(\gamma_2p'sp^{-1}\gamma_1^{-1}, Ad(\gamma_1 p)(\lambda)) = f_{Q'}[\sigma_{\gamma_2}C^{\mu}_{Q|Q'}(s,\lambda)\sigma_{\gamma_1}^{-1}]f_Q
 \end{equation}
 where $(\sigma_{\gamma_i }\psi)(x) = \psi(x\gamma_i)$,
 $f_{Q'} = e^{(s\lambda +\tau_{Q'})(H_{Q'}(\gamma_2))}$ and $f_Q = e^{-(\lambda+\tau_Q)(H_Q(\gamma_1))}$.
\end{prop}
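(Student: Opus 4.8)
The plan is to read off the scattering matrices as the operator coefficients of the Langlands constant-term expansion, and to obtain the transformation law by tracking how the Eisenstein series $E_Q(x,\lambda,\psi)$ and its constant term $E_{Q|Q'}(x,\lambda,\psi)$ behave when the parabolics $Q,Q'$ are replaced by their $\Gamma$-conjugates $Q_1=\gamma_1 Q\gamma_1^{-1}$, $Q_2=\gamma_2 Q'\gamma_2^{-1}$ and the argument is translated by $p\in Q$, $p'\in Q'$. Throughout I would work in the range $\Re(\lambda)\gg 0$, where the defining series converges and can be manipulated directly, deferring the meromorphic continuation to the end.

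First I would record how the horospherical data transform under left translation. Conjugation by $\gamma_1$ carries the $Q$-decomposition $G=N_Q A_Q M_Q K$ to the $Q_1$-decomposition, using $A_{Q_1}=\gamma_1 A_Q\gamma_1^{-1}$ and $M_{Q_1}=\gamma_1 M_Q\gamma_1^{-1}$. Since $\gamma_1\in\Gamma$ does not normalise $K$, decomposing $\gamma_1 y$ produces
\[ H_{Q_1}(\gamma_1 y) = Ad(\gamma_1)H_Q(y) + (\text{correction depending on the } K\text{-part of } y), \]
together with a companion formula expressing $\pi_{Q_1}(\gamma_1 y)$ through $\gamma_1\cdot\pi_Q(y)$, and the right translation by $p\in Q$ produces the $Ad(p)$-twist of the parameter $\lambda$. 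Showing that, after the coset summation and the $N$-integration below, these $y$-dependent corrections collapse into the single $y$-independent scalars $f_Q=e^{-(\lambda+\tau_Q)(H_Q(\gamma_1))}$ and $f_{Q'}=e^{(s\lambda+\tau_{Q'})(H_{Q'}(\gamma_2))}$ is the technical heart of the argument.

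Second I would feed these identities into the defining series. Using $\Gamma\cap Q_1=\gamma_1(\Gamma\cap Q)\gamma_1^{-1}$ (valid since $\gamma_1\in\Gamma$) and reindexing the coset sum by $\delta=\gamma_1\epsilon$, the series for $E_{Q_1}$ collapses onto the series for $E_Q$ with transported data, up to the scalar $f_Q$ and the replacement $\psi\mapsto\sigma_{\gamma_1}\psi$; here $\sigma_{\gamma_1}$ appears because $\gamma_1$ acts on the boundary symmetric space $X^Q$ and hence on the eigenfunction $\psi\in L^2(S_Q)$ through the right-translation convention $(\sigma_{\gamma_1}\psi)(x)=\psi(x\gamma_1)$. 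I would then pass to constant terms: the constant term along $Q_2$ is the integral of $E_{Q_1}$ over $(\Gamma\cap N_{Q_2})\backslash N_{Q_2}$, and the substitution $n=\gamma_2 n'\gamma_2^{-1}$ converts this into the integral over $(\Gamma\cap N_{Q'})\backslash N_{Q'}$ defining $E_{Q|Q'}$, the normalisation of Haar measure to total mass one guaranteeing that no extra constant appears. Inserting Langlands' expansion on both sides and comparing the coefficients of the $A_{Q_2}$-characters $e^{(\tau_{Q_2}+s'\,Ad(\gamma_1 p)\lambda)(H)}$ term by term, under the assignment $s\mapsto s'=\gamma_2 p' s p^{-1}\gamma_1^{-1}$, lets me solve for $C^{\mu}_{Q_1|Q_2}$ and arrive at the stated identity $C^{\mu}_{Q_1|Q_2}=f_{Q'}[\sigma_{\gamma_2}C^{\mu}_{Q|Q'}\sigma_{\gamma_1}^{-1}]f_Q$.

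The main obstacle will be the bookkeeping of Step one: verifying that the failure of $\gamma_1,\gamma_2$ to normalise $K$ and of $p,p'$ to normalise $A_Q,A_{Q'}$ contributes exactly the claimed exponential prefactors and no spurious ones, i.e. that the $y$-dependent correction terms genuinely assemble into the $y$-independent scalars $f_Q,f_{Q'}$. Alongside this, I would need to check that $s\mapsto\gamma_2 p' s p^{-1}\gamma_1^{-1}$ is the correct bijection of $S(Q,Q')$ onto $S(Q_1,Q_2)$, so that the two constant-term expansions can be matched index by index; this is what makes the coefficient comparison legitimate and pins down which $s'$ each $s$ contributes to.
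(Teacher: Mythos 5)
Your plan is essentially the right one, and it matches the only ``proof'' the paper offers, which is a citation to Harish-Chandra \cite[Lemma 102]{HarishChandra1968}: that lemma is itself established by exactly the manipulation you describe, namely re-indexing the defining coset sum of $E_{Q_1}$ via $(\Gamma\cap Q_1)=\gamma_1(\Gamma\cap Q)\gamma_1^{-1}$, converting the constant term along $Q_2$ into the constant term along $Q'$ by the substitution $n=\gamma_2 n'\gamma_2^{-1}$ together with $\Gamma$-invariance of the Eisenstein series, and then matching coefficients of the $A$-characters in the two Langlands expansions under the correspondence $s\mapsto \gamma_2 p' s p^{-1}\gamma_1^{-1}$. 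So there is no divergence of method to report.

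The caveat is that what you have written is a plan rather than a proof: you correctly isolate the two load-bearing steps --- (i) that the cocycle corrections to $H_{Q_1}(\gamma_1\,\cdot\,)$ and $\pi_{Q_1}(\gamma_1\,\cdot\,)$ coming from $\gamma_1,\gamma_2,p,p'$ assemble into precisely the $x$-independent scalars $f_Q=e^{-(\lambda+\tau_Q)(H_Q(\gamma_1))}$ and $f_{Q'}=e^{(s\lambda+\tau_{Q'})(H_{Q'}(\gamma_2))}$ and the twists $\sigma_{\gamma_i}$, and (ii) that $s\mapsto\gamma_2 p' s p^{-1}\gamma_1^{-1}$ is a bijection $S(Q,Q')\to S(Q_1,Q_2)$ so that the coefficient comparison is term-by-term legitimate --- but you defer both. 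Step (i) is where all the content lives: it requires the cocycle identity for $H_Q$ under left multiplication by elements of $Q$, the observation that the choice of split component $A_{Q_1}=\gamma_1 A_Q\gamma_1^{-1}$ (rather than the one adapted to the basepoint $x_0$) is what makes the correction independent of the variable point, and the verification that the exponent of $f_{Q'}$ carries $s\lambda$ rather than $\lambda$ because it arises after the Weyl-element reflection in the constant term. Until that computation is actually carried out, the argument is an accurate road map of Harish-Chandra's proof rather than a self-contained replacement for it.
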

For a proof refer to \cite[Lemma 102]{HarishChandra1968}.

We still need to state and discuss a few more result before we can state the factorization properties of scattering matrices associated to non-minimal parabolic subgroups.

 We will again work with a pair of associate rational parabolic subgroup $\boldsymbol{Q_1},\boldsymbol{Q_2}$ of $\boldsymbol{G}$ with the real locus being $Q_1,Q_2$ and preassigned split components $A_{Q_1},A_{Q_2}$ respectively. Choose another parabolic subgroup $Q$ along with split component $A_Q$, such that $Q$ dominates both the parabolic subgroups $Q_1,Q_2$. Let $Q$ has an associated rational Langlands decomposition given by $Q = M_Q \times A_Q \times N_Q$. 
 
 Denote by $\tilde{Q_i}$ the rational parabolic subgroup in $M_Q$, defined as $\tilde{Q}_i = M_Q \cap Q_i$ along with rational split component $A_{i} = M_Q \cap A_{Q_i}$. For $s \in S(Q_1,Q_2)$  denote by $\tilde{s}$ the restriction of $s$ to $\a_1  =Lie(A_1)$, similarly for $\lambda_1 \in \a_{Q_1}^* \otimes_{\R}\C$, then call its restriction to $\a_Q^* \times_{\R}\C$ by $\tilde{\lambda}$. 
 
 \begin{prop}\label{prop:4_6}
 Let $Q_1,Q_2$ be as before let $s \in S(Q_1,Q_2)$ such that 
 $s|_{\a_{Q_1}} \equiv Id$, then for $\lambda_1 \in \a_{Q_1}^* \otimes_{\R}\C $ with $Re(\lambda_1) >> 0$ ,we have
 
 \begin{equation}
     C^{\mu}_{Q_1|Q_2}(s,\lambda)  =C^{\mu}_{\tilde{Q_1}|\tilde{Q_2}}(\tilde{s},\tilde{\lambda})
 \end{equation}
 \end{prop}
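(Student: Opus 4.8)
The plan is to reduce the full scattering matrix to a computation inside the Levi factor $M_Q$ by exploiting the transitivity of the constant-term integral, and the standing hypothesis --- that $s$ restricts to the identity on the central split component $\mathfrak{a}_Q$ of the dominating parabolic $Q$ --- is exactly what makes this reduction clean. Recall that $C^{\mu}_{Q_1|Q_2}(s,\lambda)$ is read off from the constant term of the Eisenstein series $E_{Q_1}(x,\lambda,\psi)$ along $Q_2$, namely from
\[
E_{Q_1|Q_2}(x,\lambda,\psi) = \int_{(\Gamma \cap N_{Q_2})\backslash N_{Q_2}} E_{Q_1}(nx,\lambda,\psi)\,dn = \sum_{t \in S(Q_1,Q_2)} e^{(\tau_{Q_2}+t\lambda)H}\,(C^{\mu}_{Q_1|Q_2}(t,\lambda)\psi)(\pi_{Q_2}(x)),
\]
so the whole problem is to isolate the coefficient of the exponential attached to the fixed element $s$.

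First I would use that $Q$ dominates $Q_2$, so that $N_Q \subseteq N_{Q_2}$ and the quotient $N_{Q_2}/N_Q$ is isomorphic to the unipotent radical $N_{\tilde{Q}_2}$ of the parabolic $\tilde{Q}_2 = M_Q \cap Q_2$ of $M_Q$. With the Haar measures normalized to total mass one, Fubini splits the defining integral in stages,
\[
E_{Q_1|Q_2}(x,\lambda,\psi) = \int_{(\Gamma \cap N_{\tilde{Q}_2})\backslash N_{\tilde{Q}_2}} \left( \int_{(\Gamma \cap N_Q)\backslash N_Q} E_{Q_1}(n' n x,\lambda,\psi)\,dn' \right) dn ,
\]
where the inner integral is the constant term $E_{Q_1|Q}$ of $E_{Q_1}$ along the dominating parabolic $Q$. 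This is the step where induction in stages for Eisenstein series is invoked: along $Q$ the series collapses to the central $\mathfrak{a}_Q$-exponential times the $M_Q$-Eisenstein series built from the same cusp datum $\psi$ and the restricted spectral parameter $\tilde{\lambda}$, schematically
\[
E_{Q_1|Q}(x,\lambda,\psi) = e^{(\tau_Q + \lambda|_{\mathfrak{a}_Q})(H_Q(x))}\, E^{M_Q}_{\tilde{Q}_1}\!\big(\pi_Q(x),\tilde{\lambda},\psi\big).
\]

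Next I would feed this back into the outer integral over $N_{\tilde{Q}_2}$. Since $n \in N_{\tilde{Q}_2} \subset M_Q$ leaves the $\mathfrak{a}_Q$-exponential unchanged and acts only through $M_Q$, the outer integral is precisely the constant term of $E^{M_Q}_{\tilde{Q}_1}$ along $\tilde{Q}_2$, which by the definition of the scattering matrices on $M_Q$ expands as $\sum_{\tilde{t} \in S(\tilde{Q}_1,\tilde{Q}_2)} e^{(\tau_{\tilde{Q}_2}+\tilde{t}\tilde{\lambda})}(C^{\mu}_{\tilde{Q}_1|\tilde{Q}_2}(\tilde{t},\tilde{\lambda})\psi)$. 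Comparing with the displayed expansion of $E_{Q_1|Q_2}$ and using the compatibilities $\tau_{Q_2}|_{\mathfrak{a}_Q} = \tau_Q$ and $\tau_{Q_2}|_{\mathfrak{a}_1} = \tau_{\tilde{Q}_2}$, the exponent $(\tau_{Q_2}+t\lambda)H$ matches $(\tau_Q+\lambda|_{\mathfrak{a}_Q})$ plus $(\tau_{\tilde{Q}_2}+\tilde{t}\tilde{\lambda})$ exactly for those $t$ that fix $\mathfrak{a}_Q$ pointwise and restrict to $\tilde{t}$ on $\mathfrak{a}_1$. I would then verify that restriction $t \mapsto \tilde{t} = t|_{\mathfrak{a}_1}$ is a bijection from $\{t \in S(Q_1,Q_2): t|_{\mathfrak{a}_Q} = \mathrm{Id}\}$ onto $S(\tilde{Q}_1,\tilde{Q}_2)$; term-by-term identification of the coefficients of $e^{(\tau_{Q_2}+s\lambda)H}$ then yields $C^{\mu}_{Q_1|Q_2}(s,\lambda) = C^{\mu}_{\tilde{Q}_1|\tilde{Q}_2}(\tilde{s},\tilde{\lambda})$.

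The main obstacle is the induction-in-stages identity for the inner integral, which carries the genuine analytic content; it needs the absolute convergence guaranteed by $Re(\lambda_1) \gg 0$ to justify interchanging summation and integration and to collapse the $N_Q$-integral of the $(\Gamma\cap Q_1)\backslash\Gamma$ sum into an $M_Q$-level sum, after which the identity is propagated to all $\lambda$ by meromorphic continuation. Secondary but still delicate points are the bookkeeping of the Haar-measure normalizations in the splitting $N_{Q_2} = N_{\tilde{Q}_2} N_Q$, and the verification that exactly the $\mathfrak{a}_Q$-trivial elements of $S(Q_1,Q_2)$ contribute and biject with $S(\tilde{Q}_1,\tilde{Q}_2)$; these together supply the linear-independence of exponentials needed to pass from equality of constant terms to equality of individual scattering matrices.
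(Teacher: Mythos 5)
The paper does not actually prove this statement; it defers to Harish--Chandra's Lemma 108, and your sketch is essentially the standard argument underlying that lemma: induction in stages for the cuspidal Eisenstein series, transitivity of the constant term through the dominating parabolic $Q$, and identification of coefficients of linearly independent exponentials. So in substance you are reconstructing the cited proof rather than taking a genuinely different route.

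One step is stated too strongly. The displayed identity $E_{Q_1|Q}(x,\lambda,\psi)=e^{(\tau_Q+\lambda|_{\a_Q})(H_Q(x))}\,E^{M_Q}_{\tilde{Q}_1}(\pi_Q(x),\tilde{\lambda},\psi)$ is not the full constant term of $E_{Q_1}$ along $Q$: since the split component of $Q_1$ strictly contains $\a_Q$, the constant term along $N_Q$ contains further terms whose $\a_Q$-exponents are $(t\lambda)|_{\a_Q}$ for elements $t$ that do not fix $\a_Q$ pointwise. What is true, and all you need, is that for generic $\lambda$ in the region of convergence the unique term with $\a_Q$-exponent $\lambda|_{\a_Q}$ is the one you wrote, and only that term can contribute to the coefficient of $e^{(\tau_{Q_2}+s\lambda)H}$ once $s|_{\a_Q}\equiv\mathrm{Id}$. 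Your later coefficient-matching implicitly uses exactly this, so the issue is bookkeeping rather than substance, but as displayed the induction-in-stages identity would be false if read literally. A purely notational point in the same direction: the hypothesis in the statement should read $s|_{\a_Q}\equiv\mathrm{Id}$ (identity on the split component of the dominating parabolic), not $s|_{\a_{Q_1}}\equiv\mathrm{Id}$, which would force $s$ to be trivial; you have interpreted it the intended way.
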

 For a proof refer to \cite[Lemma 108]{HarishChandra1968}.
 In order to obtain the full spectral resolution of $\Delta$ acting on 
$L^2 (\Gamma \backslash X)$, we need to set up some more notations. Note that the following discussion is a summary of the work done in section 13 of \cite{Ji2002} and is described here only for the purpose of giving a rather complete picture of the spectral theory of Locally symmetric spaces.

\pagebreak
For any $g \in L^2(i\a_Q^*)$, we introduce a function $\tilde{g} \in L^2(\Gamma \backslash X)$ as follows,
 
 \begin{equation}
     \tilde{g}(x) = \int_{i\a_Q^*} g(\lambda)E_Q(x,\psi,\lambda)d\lambda
 \end{equation}
 where $\psi \in L^2(X_Q)$ is an $L^2$ eigenfunction for the Laplacian on $X_Q$. For a fixed $Q,\psi$ denote by $L^2_{Q,\psi}$ to be the subspace spanned by functions of the form $\tilde{g}$ as above.

Let $\T$ be a set of associate rational parabolic subgroups of $G$. Let
$Q \in \T$, with an associated rational Langlands decomposition given by
$Q =MAN$.
Note that there is an set of  positive roots $\Sigma_{\T,+}$ 
such that the Lie algebra $n = Lie(N)$ admits a root space decomposition corresponding to the adjoint action of $A$ on $n$, i.e.

\begin{equation}
    n = \displaystyle \sum_{\beta \in \Sigma_{\T,+}} n^{\beta}
\end{equation}

Associated to $\Sigma_{\T,+}$,  $\a$ admits  Weyl chambers 
given by $C_1,C_2,...,C_t$. Note that associated to each of these chambers there is a rational parabolic subgroup $Q_{C_i}$ with rational Langlands decomposition $Q_{i} =MAN_i$, where
$n_i = \oplus_{\beta \in \Sigma_i}n^{\beta}$, with $n_i  = Lie(n_i)$ 

and 
$\Sigma_{i,+} = \{\beta \in \Sigma_{\T,+} | \beta(C_i)>0\}$. Let $\T_i =\{zQ_iz^{-1}| z \in G_{\Q}\}$,
 For each i let $j_i$ be the number of $\Gamma$ conjugacy classes in $\T_i$, with representatives $Q_{i \beta}$, with $1 \leq \beta \leq j_i$. For $\mu > 0$, let $\M_{i \beta}(\mu)$ be the $L^2$ eigenspace with eigenvalue $\mu$ of $S_{i\beta}$ (the boundary locally symmetric space associated to the rational parabolic subgroup $P_{i \beta}$). Note that the spaces are $\M_{i \beta}$ are non-empty only if $\mu$ is an actual eigenvalue. We will assume that 
$\mu \in \cup C_{i \beta}$, where $C_{i \beta}$ is the set of $L^2$ eigenvalues for the space $S_{i \beta}$.

Define the spaces,

\begin{equation*}
    \M_i(\mu) = \bigoplus _{\beta = 1}^{j_i}\M_{i \beta}(\mu)
\end{equation*}
To make things easier, we will assume that the Lie algebra $\a_{i\beta}$ to be $\a_i$, after conjugating by an appropriate element of $K$. More precisely, $\exists z_{i\beta} \in G_{\Q}$ such that
$Ad(z_{ik})(\a_i) = \a_{i \beta}$.

Choose $\psi_i  = (\psi_{i1},\psi_{i2},...,\psi_{ij_i})\in \M_{i}(\mu)$ and $\lambda _i \in \a_i \otimes_{\R} \C $, and define

\begin{equation*}
    E_{Q_i}(x,\lambda_i,\psi_i) = \sum_{\beta =1}^{j_i} E_{Q_{i \beta}}(x,\lambda_i, \psi_{i \beta})
\end{equation*}
Then for $w \in \mathcal{W}(\a_i,\a_j)$ we have a set of intertwining operators, $C^{\mu}_{ij}(w,\lambda_i): \M_i(\mu) \longrightarrow \M_j(\mu)$. Defined by,

\begin{equation}
    (\psi_j , C^{\mu}_{ij}(w,\lambda_i)(\phi_i))_{\M_i} = (\psi_j , C^{\mu}_{P_{i\beta_1}|P_{j \beta_2}}(z_{i\beta1}wz_{j\beta_2},Ad(y_{i\beta_1}(\lambda_i))), \phi_i)_{\M_{i\beta}}
\end{equation}
 where $\phi_i \in \M_{i \beta_1}(\mu)$ and
 $\psi_j \in \M_{j \beta_2}(\mu)$.

 Then the main result of Langlands is,
 
 \begin{prop}\label{prop:4_11}
 For any $k=1,2,...,r$ , $w_{ki} \in \mathcal{W}(\a_k,\a_i)$, $w_{kj} \in \mathcal{W}(\a_k,\a_j) $ satisfying $w_{ki} = w_{ji}w_{kj}$ we have,
 
 \begin{equation}
     C^{\mu}_{ki}(w_{ki},\lambda_i) = C^{\mu}_{kj}(w_{kj},w_{ji}\lambda_i)C^{\mu}_{ji}(w_{ji},\lambda_i)
 \end{equation}
 
 As well as the fact that $C^{\mu}_{ji}(w_{ji},\lambda_i)C^{\mu}_{ij}(w_{ji}^{-1},w_{ji}\lambda_i)=Id$, which implies if $\lambda_i$ is purely imaginary $C^{\mu}_{ki}(w_{ki},\lambda_i)$ is bijective and unitary.

 \end{prop}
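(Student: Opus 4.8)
The statement to be proved is the cocycle (functional-equation) relation for the block scattering matrices $C^{\mu}_{ij}$, together with the resulting inversion formula and unitarity on the imaginary axis. The engine of the whole argument is the functional equation of the Eisenstein series $E_{Q_i}(x,\lambda_i,\psi_i)$, and my plan is to record it, iterate it along a chain of associate parabolics, and then read off the operator identity by comparing eigenfunction arguments. Concretely, the basic functional equation I would use is
\begin{equation*}
E_{Q_i}(x,\lambda_i,\psi_i) = E_{Q_j}\bigl(x,\, w_{ji}\lambda_i,\, C^{\mu}_{ji}(w_{ji},\lambda_i)\psi_i\bigr),
\end{equation*}
valid as an identity of meromorphic functions of $\lambda_i$ for each $w_{ji}\in\W(\a_i,\a_j)$.

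To justify this functional equation I would compare constant terms. Both sides are Eisenstein series attached to a single associate class, and by the constant-term formula for $E_{Q|Q'}$ each is determined along every proper rational parabolic $Q'$ by a finite sum of exponentials $e^{(\tau_{Q'}+t\lambda_i)(H)}$ with operator coefficients assembled from the $C^{\mu}_{Q|Q'}(t,\lambda)$. Using the transformation law of Proposition \ref{prop:4_4} to normalize split components, one checks that the two sides carry identical constant terms along all parabolics in the class; their difference then has vanishing constant terms, is therefore rapidly decreasing and square-integrable, and, lying in the range of the continuous spectrum, must vanish. This uniqueness step — equivalently, the full meromorphic continuation and functional equations of Langlands' theory — is the genuine analytic content and the main obstacle. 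Everything downstream of it is formal.

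Granting the functional equation, I iterate it along $i\to j\to k$: applying it first with $w_{ji}$ and then with $w_{kj}$ to the resulting parameter $w_{ji}\lambda_i$ rewrites $E_{Q_i}(x,\lambda_i,\psi_i)$ as an Eisenstein series from $Q_k$ whose Weyl element is the composite of $w_{ji}$ and $w_{kj}$ — which by the hypothesis $w_{ki}=w_{ji}w_{kj}$ equals $w_{ki}$ — giving
\begin{equation*}
E_{Q_i}(x,\lambda_i,\psi_i) = E_{Q_k}\bigl(x,\, w_{ki}\lambda_i,\, C^{\mu}_{kj}(w_{kj},w_{ji}\lambda_i)\,C^{\mu}_{ji}(w_{ji},\lambda_i)\psi_i\bigr).
\end{equation*}
Comparing with the direct functional equation $E_{Q_i}(x,\lambda_i,\psi_i)=E_{Q_k}\bigl(x,w_{ki}\lambda_i,C^{\mu}_{ki}(w_{ki},\lambda_i)\psi_i\bigr)$, the two right-hand sides are Eisenstein series from $Q_k$ with the \emph{same} spectral parameter. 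Since the map $\Phi\mapsto E_{Q_k}(x,\nu,\Phi)$ is injective for generic $\nu$ — its constant term along $Q_k$ contains the summand $e^{(\tau_{Q_k}+\nu)(H)}\,\Phi(\pi_{Q_k}(x))$ with a distinguished exponent, so $\Phi$ is recovered — and then by meromorphic continuation in $\lambda_i$, reading off the eigenfunction argument yields
\begin{equation*}
C^{\mu}_{ki}(w_{ki},\lambda_i) = C^{\mu}_{kj}(w_{kj},w_{ji}\lambda_i)\,C^{\mu}_{ji}(w_{ji},\lambda_i).
\end{equation*}

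Finally I specialize: taking $k=i$ with $w_{ij}=w_{ji}^{-1}$ and using $C^{\mu}_{ii}(\mathrm{Id},\lambda_i)=\mathrm{Id}$ (the leading $t=\mathrm{Id}$ term of the constant term along $Q_i$) gives $C^{\mu}_{ji}(w_{ji},\lambda_i)\,C^{\mu}_{ij}(w_{ji}^{-1},w_{ji}\lambda_i)=\mathrm{Id}$. For purely imaginary $\lambda_i$ I would invoke the Maass–Selberg relations for inner products of truncated Eisenstein series to identify $C^{\mu}_{ij}(w_{ji}^{-1},w_{ji}\lambda_i)$ with the Hilbert-space adjoint of $C^{\mu}_{ji}(w_{ji},\lambda_i)$; combined with the inversion relation this forces $C^{\mu}_{ji}\,(C^{\mu}_{ji})^{*}=\mathrm{Id}$, so each factor is unitary, and the cocycle relation then exhibits $C^{\mu}_{ki}(w_{ki},\lambda_i)$ as a product of unitaries, hence bijective and unitary. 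The adjoint identity on the imaginary axis is the second technical input, secondary in difficulty to the functional equation itself; together with the reduction of Proposition \ref{prop:4_6} this composition law is precisely what will later factorize the higher-rank scattering matrices into rank-one pieces.
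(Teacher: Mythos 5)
Your proposal is correct and is essentially the standard Langlands argument (one-step functional equation of the Eisenstein series, iteration plus injectivity of $\Phi\mapsto E_{Q_k}(x,\nu,\Phi)$ for generic $\nu$, and the Maass--Selberg relations for unitarity on the imaginary axis), which is exactly the route taken in the reference the paper cites for this proposition; the paper itself gives no independent proof. The only caveat is that the functional equation you take as input is the deep content of the theorem, so your argument is a reduction to Langlands' theory rather than a self-contained proof --- but that is also all the paper does.
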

 
 For a proof refer to Proposition 2.4.2 in \cite{Ji1999}.
 
 An immediate conclusion of this theorem is that
 the spaces $\M_i(\mu)$ all have the same dimension, denote this by D. Choose a set of basis $\Psi_{i}^1,\psi_i^2,...,\psi_i^D$ of the space $\M_i(\mu)$, such that each $\Psi_{i}^k$ belongs to one of the subspaces $\M_{i \beta}(\mu)$.
 
 For a given $g_i  = (g^1_i,g^2_i,...,g^D_i)$, where
 $g^k_i \in L^2(i\a_i ^*)$, we define a transform,
 
 \begin{equation}
     \tilde{f}^k_i(z) = \int_{i\a_i^*}g^k_i(\lambda_i)E_{Q_{i\beta}}(z,g_i^k , \lambda_i )d \lambda_i
 \end{equation}
where $\beta$ is uniquely determined by the fact that each $\Psi_{i}^k$ belongs to one of the subspaces $\M_{i \beta}(\mu)$.

and denote by $\tilde{f}_i(x)$ by the sum 
$\displaystyle \sum_{i=1}^D \tilde{f}^k_i(x)$.

Denoting by, $L^2_{\T,\mu}$ the subspace of 
$\prod_{i=1}^r \prod_{k=1}^D L^2(i \a_i^*)$, which consists of tuples of functions, $g =(g_1,g_2,...,g_D) =(g_1^1,...,g^D_1,g^1_2,...,g^D_2,...,g^1_r,...,g^D_r)$, that satisfy the condition:

for any $\{j_1,j_2\} \subset \{1,2,...,r\}$ along with $w_{j_1j_2} \in \mathcal{W}(\a_{j_1},\a_{j_2})$
we must have,

\begin{equation*}
    C^{\mu}_{j_1j_2}(w_{j_1j_2},\lambda_{j_2}) \sum_{k=1}^Dg^k_{j_2}(\lambda_{j_2})\psi^k_{j_2}=\sum_{k=1}^Dg^k_{j_1}(w_{j_1j_2}\lambda_{j_2})\psi^k_{j_1}
\end{equation*}

For any $g \in L^2_{T,\mu}$, define an associated norm function,

\begin{equation}
    ||g||^2 = \frac{(2\pi)^{-l}}{c}\sum_{j_1=1}^r\sum_{k=1}^D \int_{i \a_{j_!} ^*} |g^k_{j_1}(\lambda_{j_1})|^2 d\lambda_{j_1}
\end{equation}

where $l$ is the dimension of $\a_i$ and c is the number chambers on $\a_i$. Finally, define
$\tilde{g}(x) = \sum_{j_1=1}^r \tilde{g}_{j_1}(x)$.

The assignment $L^2_{\T,\mu} \ni f \mapsto \tilde{f} \in L^2(\Gamma \backslash X)$ is an isometric embedding into $\displaystyle\sum_{i=1}^r\sum_{\beta=1}^{j_i}\sum_{\psi \in \M_{i\beta}(\mu)}L^2_{Q_{i \beta},\psi}$. Call this image to be $L^2_{\T,\mu}(\Gamma \backslash X)$.

\begin{prop}\label{prop:4_15}

Further, for any rational parabolic subgroup $Q$, and an $L^2$ eigenfunction $\psi $ on the space $X_Q$, we identify $g \in L^2(i\a_Q^{*+})$ with a subspace of $L^2(\Gamma \backslash X)$ through the map,

\begin{equation*}
    L^2_{\T,\mu} \ni g \mapsto \tilde{g} = \int_{i\a_Q^{*+}}g(\lambda)E_Q(x,\psi,\lambda)d\lambda \in L^2(\Gamma \backslash X)
\end{equation*}
Then the continuous subspace $L^2_C(\Gamma \backslash X)$ is equal to the direct sum :

\begin{equation}
    \sum_Q \sum_{\psi} \bigoplus L^2(i\a_Q^{*+})
\end{equation}
where $Q$ runs over all the $\Gamma$-conjugacy
 classes of proper rational parabolic subgroups, and $\psi$ is over an orthonormal basis of $L^2$ eigenfunctions associated to the discrete spectrum of the space $X_Q$.
 \end{prop}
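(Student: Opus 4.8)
The plan is to assemble the spectral decomposition from the Eisenstein series machinery developed above, organizing the continuous spectrum first along associate classes of parabolic subgroups and then along the eigenvalues $\mu$ of the boundary Laplacians. First I would partition the proper rational parabolic subgroups into associate classes $\T$; the vanishing result $E_{Q|Q'}(x,\lambda,\psi)=0$ for non-associate pairs with $\rank(Q)\geq \rank(Q')$ guarantees that the contributions coming from distinct associate classes are mutually orthogonal in $L^2(\Gamma\backslash X)$, so the problem reduces to analyzing each class $\T$ in isolation and then taking an orthogonal direct sum.

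Fixing an associate class $\T$ and an eigenvalue $\mu$, I would invoke the isometric embedding $L^2_{\T,\mu}\to L^2_{\T,\mu}(\Gamma\backslash X)$ recorded immediately before the statement, whose defining constraints are exactly the functional equations of Proposition \ref{prop:4_11}. The key point is that these functional equations, together with the unitarity of $C^{\mu}_{ij}(w,\lambda)$ for purely imaginary $\lambda$, identify the copies of $L^2(i\a_i^*)$ attached to the various chambers $C_1,\dots,C_t$ of $\a_i$ (equivalently, to the various parabolics in the class). After this identification, a tuple in $L^2_{\T,\mu}$ is completely determined by its restriction to the positive chamber, which realizes $L^2_{\T,\mu}(\Gamma\backslash X)$ as a single copy $\bigoplus_{\psi}L^2(i\a_Q^{*+})$; the normalization constant $1/c$ in the norm formula is precisely the bookkeeping for the $c$ chambers being collapsed to one. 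Summing over all classes $\T$, all $\mu$, and the orthonormal basis $\{\psi\}$ of boundary eigenfunctions then produces the asserted direct sum $\sum_Q\sum_{\psi}\bigoplus L^2(i\a_Q^{*+})$. I would next check that the image genuinely lies in $L^2_C(\Gamma\backslash X)$: since $E_Q(x,\lambda,\psi)$ is a generalized eigenfunction of $\Delta$ whose eigenvalue varies continuously with $\lambda\in i\a_Q^*$, no nonzero $\tilde{g}$ is an $L^2$-eigenfunction, so each $L^2_{\T,\mu}(\Gamma\backslash X)$ is orthogonal to the discrete spectrum.

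The hard part is completeness, namely showing that these Eisenstein integrals exhaust all of $L^2_C(\Gamma\backslash X)$ rather than merely spanning a proper subspace. This is the analytic heart of Langlands' theory and cannot be extracted from the formal properties of the scattering matrices alone: it requires the Maass--Selberg relations to evaluate inner products of truncated Eisenstein series, a contour-shift and residue argument relating the truncated pairing back to the genuine $L^2$ inner product, and an induction on parabolic rank to handle the Eisenstein series built from cuspidal data on the lower-rank boundary components. I would therefore follow the treatment in \cite[Section 13]{Ji2002}, invoking Langlands' spectral decomposition \cite{Langlands1976} for the completeness assertion, while supplying the orthogonality across associate classes and the chamber-collapsing parametrization directly from the functional equations and unitarity established in Proposition \ref{prop:4_11}.
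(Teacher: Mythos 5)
Your proposal is correct and follows essentially the same route as the paper, which proves this statement simply by citing Proposition 13.14 of \cite{Ji2002}: you reconstruct the structural steps (orthogonality across associate classes, the isometric embedding of $L^2_{\T,\mu}$, and the chamber-collapsing via the functional equations and unitarity of Proposition \ref{prop:4_11}) and correctly defer the completeness assertion to Langlands' theory as treated in \cite[Section 13]{Ji2002}. No discrepancy with the paper's argument.
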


\begin{proof}
Refer to Proposition 13.14 in \cite{Ji2002}.
\end{proof}

\section{Scattering geodesics in Locally symmetric spaces}
\label{scatteringgeodesics}

Here we will review the work of Ji and Zworski on scattering geodesics from \cite{Ji2001}.
To make sense of the following discussion we assume that the
$\Q$-rank of $\Gamma \backslash X$ is one for the rest of the section. Recall that the $\Q$-rank of $\Gamma \backslash X$ is the maximal possible $\Q$-rank of a rational parabolic subgroup of $G$.

Let $\sigma : \R \longrightarrow M$ be a geodesic, where M is a Riemannian manifold. Then this geodesic is called 
$\textbf{Eventually Distance Minimizing}$ if $\exists t_0,t_1 \in R$ such that $\sigma|_{(-\infty,t_0]}$ as well as $\sigma|_{[t_1,\infty)}$ are both isometries onto their respective images in $M$. 

Let's recall some notations. If $Q$ is a rational parabolic 
subgroup of $G$, then associated to the Rational Langlands
decomposition of $Q$, there is a horospherical decomposition of the symmetric space $X$, described as follows.

Every $x \in X$, can be represented as $x  =(z_x ,exp_Q(H_x),n_x) \in X^Q \times A_Q \times N_Q$, with $H_x \in \a_Q$, where $\a_Q^+$ is the associated positive roots with respect to the set of positive roots
$\Sigma(Q,A_Q)$. Then for fixed $(z_x,n_x) \ in X^Q \times N_Q$ and $H \in \a_Q^+$, denote by $\tilde{\sigma} = (z_x,exp_Q(tH),n_x)$ the associated geodesic in $X$. We could choose $H$ with norm one (with respect to the Killing form), then $\tilde{\sigma}$ will be a unit speed geodesic.

Denoting by $\pi: X \longrightarrow \Gamma \backslash X$, the natural projection map, it can be shown easily that $\sigma  = \pi(\tilde{\sigma})$ is a eventually distance minimizing geodesic in $\Gamma \backslash X$.

The next theorem guarantees that the only possible eventually distance minimizing geodesics in $\Gamma \backslash X$ are of the above form.

\begin{prop}\label{prop:5_1}\cite[Theorem 10.18]{Ji2002}
Let $\sigma$ be an eventually distance minimizing geodesic 
$\Gamma \backslash X$, then $\sigma =\pi(\tilde{\sigma}) $, where $\tilde{\sigma}$ is a geodesic in $X$, of the form
$\tilde{\sigma} = (z,exp_Q(tH),n) \in X^Q \times A_Q \times N_Q$, with $(z,n) \in X^Q \times N_Q$ and $H \in \a_Q^+$ for some rational parabolic subgroup $Q$.
\end{prop}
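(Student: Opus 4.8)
The plan is to lift $\sigma$ to a geodesic $\tilde\sigma$ in the symmetric space $X=G/K$, which is a complete, simply connected, nonpositively curved manifold, and to exploit two facts about such spaces: geodesics are globally distance minimizing and are uniquely determined by their restriction to any subinterval. The strategy is to prove that the \emph{forward} ray $\tilde\sigma|_{[t_1,\infty)}$ is eventually of the claimed flat product form, and then to let uniqueness of geodesics propagate that normal form to all $t$. The first reduction I would record is that a distance-minimizing ray in the finite-volume space $\Gamma\backslash X$ must leave every compact set. Indeed, if the image of $\sigma|_{[t_1,\infty)}$ stayed in a compact set $C$, then by compactness of the unit tangent bundle over $C$ one could extract times $s_j<s_k$ with $(\sigma(s_k),\dot\sigma(s_k))$ arbitrarily close to $(\sigma(s_j),\dot\sigma(s_j))$; this forces $d(\sigma(s_j),\sigma(s_k))$ to be bounded while $s_k-s_j$ is arbitrarily large, contradicting $d(\sigma(s),\sigma(t))=|s-t|$. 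Hence the ray escapes to infinity.

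Next I would invoke the reduction theory of Proposition~\ref{prop:3_5}: the complement of the compact core $X_r$ in $\Gamma\backslash X$ is the disjoint union of the Siegel ends $\Omega_j\times\exp_{R_j}(\a^+_{R_j}(T_r))$ attached to the \emph{rational} parabolic subgroups $\mathbf R_j$. Since the forward ray eventually lies outside $X_r$, it lies in this union, and I would show it is trapped in a single end: the depth into an end is, up to a bounded error, a linear functional of the $\a_Q$-component $\Psi_Q$ of the horospherical coordinate, and such a function is convex along geodesics of a Hadamard manifold, so along an escaping minimizing ray it is eventually monotone and cannot re-descend toward the core and climb a second end. This step simultaneously delivers the \textbf{rationality} of the relevant parabolic $Q=R_j$, which is exactly what the statement requires and which is handed to us for free by the structure of the ends.

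Having localized the ray in one end, I would pass to the horospherical decomposition $X\cong X^Q\times A_Q\times N_Q$ of Section~\ref{symspacegeometry} and write $\tilde\sigma(t)=(z(t),\exp_Q(H(t)),n(t))$ for $t\ge t_1$. Using that the $G$-invariant metric has the schematic form $\;ds^2 = g_{X^Q}+dH^2+\sum_{\beta\in\Sigma^+(Q,A_Q)}e^{-2\beta(H)}\,g_\beta\,$, the $X^Q$- and $N_Q$-directions are tangent to the horospheres $\{H=\text{const}\}$ while only the $A_Q$-factor moves the height. A unit-speed minimizing ray that escapes to infinity must therefore spend essentially all of its speed on the height, which forces the transverse velocities to vanish asymptotically and $H(t)=H_0+tH$ with $H\in\overline{\a_Q^+}$; since the ray goes \emph{deep} into the end (all roots positive), $H$ lies in the open chamber $\a_Q^+$. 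Because the model curve $t\mapsto(z_\infty,\exp_Q(tH),n_\infty)$ is itself a genuine geodesic of $X$ (it is the image under $N_Q\,M_Q$ of the flat geodesic $t\mapsto\exp(tH)K$, as $A_Q$ and $M_Q$ commute), and it agrees with $\tilde\sigma$ on an interval, uniqueness of geodesics identifies $\tilde\sigma$ with it for \emph{all} $t$, giving the global normal form $\tilde\sigma=(z,\exp_Q(tH),n)$.

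The hard part will be rigorously excluding transverse drift in the boundary symmetric space $X^Q$, which is itself a possibly higher-rank symmetric space carrying non-minimizing geodesics of its own: a ray could a priori climb the $A_Q$-direction while drifting along a geodesic of $X^Q$ and still escape to infinity. The resolution I would pursue is that any nonzero asymptotic $X^Q$-velocity places the limit point $\xi_+\in X(\infty)$ in the interior of a higher-dimensional face of the Tits boundary—corresponding to a proper parabolic of $M_Q$ rather than to $Q$ itself—and then the projected ray fails to be distance minimizing in $\Gamma\backslash X$: the finite-volume boundary locally symmetric space $S_Q=\Gamma^Q\backslash X^Q$ has recurrent geodesic dynamics, so the same near-recurrence/shortcut argument used in the first paragraph, applied to the $X^Q$-factor, again produces competitors shorter than the putative minimizing segment. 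Thus the transverse components must be asymptotically constant, which is precisely the assertion that $(z,n)\in X^Q\times N_Q$ is fixed; combined with the propagation argument above, this yields the proposition.
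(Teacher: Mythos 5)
The paper does not actually prove this proposition; it is quoted verbatim from \cite[Theorem 10.18]{Ji2002}, so there is no in-text argument to compare against, and your proposal must be judged on its own terms. Your outline follows the standard strategy for classifying eventually distance minimizing geodesics (escape to infinity, trapping in a single Siegel end via the reduction theory of Proposition~\ref{prop:3_5}, analysis in horospherical coordinates), but the step you yourself flag as the hard part contains a genuine gap. You claim that any nonzero asymptotic velocity in the $X^Q$-factor destroys eventual distance minimization because the finite-volume quotient $S_Q=\Gamma^Q\backslash X^Q$ has recurrent dynamics, so the shortcut argument applies. That dichotomy is false as stated: $S_Q$ is in general itself a \emph{noncompact} finite-volume locally symmetric space, and the component $z(t)$ may drift along a geodesic that is eventually distance minimizing in $S_Q$, in which case there is no near-recurrence, no shortcut, and the combined ray remains eventually distance minimizing in $\Gamma\backslash X$. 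The correct conclusion in that case is not a contradiction but a refinement of the flag: the direction then lies in the open positive chamber of a smaller rational parabolic $\mathbf{Q}''\subset\mathbf{Q}$ with $\a_{Q''}\supsetneq\a_Q$, and the proposition holds with $Q''$ in place of $Q$. Closing this loop requires an induction on the $\Q$-rank of the boundary symmetric space (equivalently, a descending induction on the parabolic); the recurrence argument only handles the base case in which the drift returns to a fixed compact part of $S_Q$ infinitely often.

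A second, smaller gap: after arguing that the transverse velocities vanish \emph{asymptotically}, you invoke uniqueness of geodesics on the grounds that $\tilde\sigma$ agrees with the model curve on an interval. Asymptotic vanishing does not give exact agreement on any interval, and two distinct geodesics of a Hadamard manifold can be forward asymptotic. The clean repair is to note that every geodesic of the globally symmetric space $X$ is exactly of the form $g\exp(tY)x_0$ with $Y\in\p$, hence lies in a maximal flat and is already of the product form $(z,\exp_{Q'}(tH),n)$ for the \emph{real} parabolic $Q'$ determined by the face of the Weyl chamber containing $Y$; the entire content of the proposition is then that eventual distance minimization in the quotient forces $Q'$ to be rational and $H$ to lie in the open chamber $\a_{Q'}^+$, which is what your asymptotic analysis, repaired as above, is supposed to deliver.
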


\begin{defn}
A scattering geodesic in $\Gamma \backslash X$ is essentially an eventually distance minimizing geodesic in the sense of the above theorem for some rational parabolic subgroup $Q$
of $G$.
\end{defn}

\begin{prop}\cite[Proposition 2.6]{Ji2001}
A geodesic $\sigma(t) $ in $\Gamma \backslash X$ is a scattering geodesic from a parabolic subgroup $Q_1$ to another parabolic subgroup $Q_2$ if and only if for every $s >>0$, there exist $t_1,t_2 \in \R$ such that $\sigma(t_1) \in Y_{Q_1,s}$ and $\sigma'(t_1) \in N^+Y_{Q_1,s}$ and $\sigma(t_2) \in Y_{Q_2,s}$ with $\sigma'(t_2) \in N^-Y_{Q_2,s}$
Where, for a rational parabolic subgroup $Q$ of $G$, $Y_{Q,s}$ is the section at height $s >>0$ of the end associated to the rational parabolic subgroup $Q$, it is a codimension one submanifold of $\Gamma \backslash X$($dim(\a_Q) =1$), and $N^{+}Y_{Q,s}$ denote the connected component of the normal bundle $NY_{Q,s} \backslash \{0\}$ 
which contains the positive chamber $A_Q^+$ and and $N^{-}Y_{Q,s}$ as the other component.
\end{prop}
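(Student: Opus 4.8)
The plan is to reduce the whole statement to the behaviour of the $Q$-height function along $\sigma$ and to exploit the horospherical product structure of the ends together with the convexity of Busemann functions on the nonpositively curved symmetric space $X$. Since we are in $\Q$-rank one, each $\a_Q$ is one-dimensional; fix the unit generator $H_Q$ of $\a_Q^+$. In horospherical coordinates write a point as $x=(z_x,\exp_Q(H_x),n_x)\in X^Q\times A_Q\times N_Q$ and set the height $h_Q(x)=\langle H_x,H_Q\rangle$. The section $Y_{Q,s}$ is, inside the Siegel end, the level set $\{h_Q=\log s\}$, and $N^{+}Y_{Q,s}$ (resp. $N^{-}Y_{Q,s}$) is the component of the normal bundle containing $+H_Q$ (resp. $-H_Q$), i.e. the component along which $h_Q$ increases (resp. decreases). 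Thus the conditions $\sigma'(t_i)\in N^{\pm}Y_{Q_i,s}$ are precisely statements about the sign of $\tfrac{d}{dt}(h_{Q_i}\circ\sigma)$ at the crossing time.

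For the direction ($\Rightarrow$), suppose $\sigma$ is a scattering geodesic from $Q_1$ to $Q_2$. By Proposition \ref{prop:5_1} its lift has the product form $\tilde\sigma(t)=(z,\exp_Q(tH),n)$ with $H\in\a_Q^+$, where $Q$ is in the $\Gamma$-class of the cusp reached in one time direction and the opposite chamber $-\a_Q^+$ governs the other. Along $\tilde\sigma$ the $A$-component is literally linear in $t$, so each $h_{Q_i}\circ\sigma$ is affine in $t$ near the corresponding end, strictly monotone and unbounded. Hence for every $s\gg 0$ there is a (unique, near that end) crossing time $t_i$ with $\sigma(t_i)\in Y_{Q_i,s}$, and the velocity lies in the fixed radial direction $\pm H_{Q_i}$, giving exactly the prescribed normal component $N^{+}$ at the $Q_1$-crossing and $N^{-}$ at the $Q_2$-crossing (the two opposite radial directions consistent with $\sigma$ escaping to a cusp in each time direction). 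This holds for all $s\gg0$.

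For the direction ($\Leftarrow$), the task is to upgrade the pointwise crossing data into global eventual distance-minimization. Since crossings of $Y_{Q_2,s}$ exist for every $s\gg0$, the continuous function $h_{Q_2}\circ\sigma$ attains arbitrarily large values, so the crossing times must escape to $\pm\infty$; the hypothesis $\sigma'\in N^{-}$ there fixes the sign of the derivative at those (arbitrarily large) times. Now use that $X$ is a Hadamard manifold, so the Busemann function attached to the $Q_2$-cusp, which agrees with $h_{Q_2}$ on the end, is convex along geodesics; therefore $h_{Q_2}\circ\sigma$ is a convex function of $t$. A convex function whose derivative has the prescribed sign at arbitrarily large times is eventually strictly monotone, so $\sigma$ is trapped in the $Q_2$-end for the appropriate large $t$ and its velocity becomes asymptotically radial (aligned with $H_{Q_2}$), because in the horospherical product metric the transverse $N_Q$-components of a unit-speed escaping geodesic are damped as the height grows. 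The symmetric argument at the $Q_1$-end (with the incoming component $N^{+}$ and time reversed) shows escape to the $Q_1$-cusp. A geodesic that is eventually radial in an end is eventually distance minimizing there, by the product-metric computation recalled before Proposition \ref{prop:5_1}; hence $\sigma$ is eventually distance minimizing in both directions and, by that proposition, is a scattering geodesic from $Q_1$ to $Q_2$.

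The main obstacle is the ($\Leftarrow$) direction, specifically the step promoting ``correct normal orientation at every height'' to ``asymptotically radial, hence eventually distance minimizing.'' A priori the crossing times for different $s$ are unrelated, so one must exclude a geodesic that repeatedly re-enters and exits the end while still meeting every section with the prescribed orientation. The convexity of the height/Busemann function along geodesics is exactly what forecloses such oscillation, and carrying this out rigorously—together with the decay of the transverse velocity components in the horospherical product metric—is the crux of the argument.
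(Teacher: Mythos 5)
The paper offers no proof of this statement; it is imported verbatim from \cite[Proposition 2.6]{Ji2001}. Your forward direction is fine: Proposition \ref{prop:5_1} gives the lift the product form $(z,\exp_Q(tH),n)$, whose velocity is the radial vector $H$, hence genuinely normal to each section $Y_{Q_i,s}$ with the stated orientations.

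The converse, however, has a genuine gap, and it originates in your reading of the hypothesis. You translate $\sigma'(t_i)\in N^{\pm}Y_{Q_i,s}$ into ``a statement about the sign of $\tfrac{d}{dt}(h_{Q_i}\circ\sigma)$ at the crossing time.'' But $N^{\pm}Y_{Q_i,s}$ are the two components of the normal bundle of the codimension-one submanifold $Y_{Q_i,s}$ minus its zero section: the hypothesis demands that $\sigma'(t_i)$ be \emph{perpendicular} to the section, not merely that its normal component have a prescribed sign. With the correct reading the converse is essentially immediate and needs none of your machinery: at a point of $Y_{Q_2,s}$ the unit normal is the radial direction $H_{Q_2}\in\a_{Q_2}$, the radial curves $(z,\exp_{Q_2}(tH_{Q_2}),n)$ are geodesics, and uniqueness of geodesics with prescribed initial position and velocity forces $\sigma$ to coincide with such a radial geodesic for $t\geq t_2$ (and, after time reversal, for $t\leq t_1$); eventual distance minimization then follows from the discussion preceding Proposition \ref{prop:5_1}. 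Having set this hypothesis aside, you are driven into the convexity argument, which does not close. First, $h_{Q_2}\circ\sigma$ is defined only while $\sigma(t)$ lies in the Siegel end, so ``convex as a function of $t$'' has no global meaning on $\Gamma\backslash X$; if you pass to a lift and a single Busemann function on $X$, the crossings of $Y_{Q_2,s}$ for different $s$ may occur at different $\Gamma$-translates of the lifted section, which is exactly the oscillation you flag and which convexity of one Busemann function does not exclude. Second, even granting eventual trapping in the end, your conclusion is only that the velocity is ``asymptotically radial,'' which is strictly weaker than the exact product form required to invoke Proposition \ref{prop:5_1}, and the ``damping of the transverse $N_Q$-components'' meant to bridge this is asserted rather than proved. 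In short, the crux you correctly identify is not resolved by the proposal; it is dissolved by the perpendicularity hypothesis that the proposal discards.
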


\begin{defn}
Let $s >> 0$ be a sufficiently large height, as in above theorem. For a scattering geodesic $\sigma(t)$ as in, let $t_1(s)$ be the largest number such that $\sigma(t)\in Y_{Q_1,s}$, the first time as $s$ decreases from $\infty$. Similarly, let $t_2(s)$ be the smallest number such that $\sigma(t)\in Y_{Q_2,s}$, the first time as $s$ increases from $-\infty$. Then the associated \textbf{sojourn Time} $T_{\sigma} = t_2(s)-t_1(s) -2s$, where the shift $2s$
is a normalization factor.
\end{defn}

The following theorems describes a way to parametrize the set of scattering geodesics between running between two end(which may be the same). For the proofs refer to section 2 in JZ.

\begin{prop}\label{prop:5_5}\cite[Proposition 2.9]{Ji2001}
Assume that the $\Q$-rank of $\Gamma \backslash X$ is equal to one. Let $\sigma(t)$ be a scattering geodesic in $\Gamma \backslash X $between the ends associated with two rational parabolic subgroups $Q_1$and $Q_2$.Then $\sigma(t)$ lies in a smooth family of scattering geodesics of the same sojourn time parametrized by a common finite covering space $X_{12}$ of the boundary locally symmetric spaces $X^{Q_1}$ and $X^{Q_2}$.
\end{prop}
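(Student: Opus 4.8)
The plan is to leverage Proposition \ref{prop:5_1}, which completely characterizes eventually distance minimizing geodesics in $\Gamma \backslash X$ as projections of explicit geodesics of the form $\tilde{\sigma} = (z, exp_Q(tH), n)$ in the horospherical coordinates for some rational parabolic subgroup $Q$. Since we are in $\Q$-rank one, each split component $A_{Q_i}$ is one-dimensional, so the positive chamber $\a_{Q_i}^+$ determines a single direction $H_i$ (of norm one) up to positive scaling; this is what forces any scattering geodesic to leave along essentially a unique ray direction at each end. First I would fix the given scattering geodesic $\sigma$ running between the ends of $Q_1$ and $Q_2$, and lift it to $\tilde{\sigma}$ in $X$. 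The lift determines boundary data $(z_1, n_1) \in X^{Q_1} \times N_{Q_1}$ for the backward end and $(z_2, n_2) \in X^{Q_2} \times N_{Q_2}$ for the forward end, obtained by reading off the two horospherical decompositions of $X$ associated to $Q_1$ and $Q_2$.

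The core of the argument is to produce the parametrizing family by varying the boundary point while holding the "shape" of the geodesic fixed. Concretely, I would consider the map that sends a point $z \in X^{Q_1}$ (in the boundary symmetric space) to the geodesic $(z, exp_{Q_1}(tH_1), n_1)$ and then projects to $\Gamma \backslash X$. The key observation is that the $N_{Q_1}$-component and the choice of $H_1 \in \a_{Q_1}^+$ do not affect whether the resulting geodesic is eventually distance minimizing at the $Q_1$ end — only the requirement that it also runs out the $Q_2$ end constrains the data. So the genuinely free parameter is the boundary point, and the family of scattering geodesics with the same sojourn time is parametrized by those $z$ for which the forward end still escapes through the $Q_2$ cusp. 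I would then show that this condition cuts out precisely a finite covering space $X_{12}$ sitting over both $X^{Q_1}$ and $X^{Q_2}$: the covering maps are induced by the two projections $\pi_{Q_1}$ and $\pi_{Q_2}$, and finiteness comes from the fact that the $\Gamma$-conjugacy relations identifying the two ends (i.e. the elements of $\Gamma$ realizing the association $Q_1 \sim Q_2$, or self-associations when $Q_1 = Q_2$) form a discrete set acting with finite index, mirroring the countability and $\Gamma_\infty$-periodicity that appeared in Guillemin's construction in Section \ref{intro}.

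To see that the sojourn time is constant along this family, I would use that the sojourn time $T_\sigma = t_2(s) - t_1(s) - 2s$ depends only on the $\a_{Q_i}$-components of the entry and exit data, i.e. on the heights at which $\tilde{\sigma}$ crosses the sections $Y_{Q_i,s}$. Since varying $z \in X^{Q_1}$ changes only the boundary-symmetric-space coordinate and leaves the $A_{Q_i}$-profile of the geodesic unchanged (the $A_Q$ and $X^Q$ factors commute in the Langlands decomposition, as recorded in Proposition \ref{prop:3_1}), the crossing heights — and hence $T_\sigma$ — are unaffected. This reduces the constancy claim to a computation showing the normalization $2s$ exactly cancels the $s$-dependence, which follows from the linearity of $t \mapsto exp_{Q_i}(tH_i)$ in the radial direction.

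The main obstacle I expect is establishing that $X_{12}$ is a \emph{common finite} cover of both boundary locally symmetric spaces rather than merely a common cover, and identifying the two covering maps compatibly. This requires controlling how the arithmetic identifications between the $Q_1$-end and the $Q_2$-end descend to the quotients $\Gamma^{Q_1} \backslash X^{Q_1}$ and $\Gamma^{Q_2} \backslash X^{Q_2}$. The natural approach is to work with the set $S(Q_1, Q_2)$ of $\Q$-rational intertwining elements $Ad(y)$ carrying $A_{Q_1}$ to $A_{Q_2}$ (introduced just before Proposition \ref{prop:4_4}); each such $y$, together with a coset in $(\Gamma \cap N_{Q_1}) \backslash N_{Q_1}$, produces one sheet of the cover, and the finiteness of $S(Q_1,Q_2)$ modulo the relevant stabilizers — together with the reduction theory of Proposition \ref{prop:3_5}, which guarantees the ends are genuinely separated cusps — is what bounds the number of sheets. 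Verifying that these sheets glue into a manifold covering both boundaries, and that the deck transformations are exactly the finite-order arithmetic symmetries, is the delicate bookkeeping step; everything else is a direct unwinding of the horospherical coordinates.
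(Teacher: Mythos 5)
The paper never proves this proposition itself --- it is imported from Ji--Zworski with the remark ``for the proofs refer to section 2 in JZ'' --- but the paper's own \autoref{theorem:6_4} (the scattering-flat analogue in Section 6) runs precisely the argument this statement needs, so that is the right benchmark. Your high-level plan agrees with it: lift $\sigma$ via \autoref{prop:5_1}, vary the boundary coordinate $z \in X^{Q_1}$ while freezing $H$ and $n$, note that the sojourn time sees only the $A_Q$-profile of the geodesic, and identify the parameter space as a common finite cover. The sojourn-time constancy argument is fine as written.

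The gap is in how you manufacture the finite cover. First, the family is not ``those $z$ for which the forward end still escapes through the $Q_2$ cusp'': every $z \in X^{Q_1}$ yields a scattering geodesic between the same two ends, because the backward end is governed by the opposite parabolic determined by the split component and the $N$-coordinate, neither of which involves $z$. The covering space arises as a \emph{quotient} of $X^{Q_1}$, not as a subset of it. Second, your sheets ``indexed by $S(Q_1,Q_2)$ together with a coset in $(\Gamma \cap N_{Q_1}) \backslash N_{Q_1}$'' cannot be correct: $(\Gamma \cap N_{Q_1}) \backslash N_{Q_1}$ is a positive-dimensional compact nilmanifold, not a finite index set, and the finite set $S(Q_1,Q_2)$ does not enter here at all. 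The mechanism the paper actually uses in Section 6 is: two lifts $(z_1, exp_{Q_1}(tH), n)$ and $(z_2, exp_{Q_1}(tH), n)$ project to the same geodesic in $\Gamma \backslash X$ if and only if some $\gamma \in \Gamma$ carries one to the other, which forces $n^{-1}\gamma n \in n^{-1}\Gamma n \cap M_{Q_1}$; hence the parameter space is $(n^{-1}\Gamma n \cap M_{Q_1}) \backslash X^{Q_1}$. Finiteness of the cover is the statement that $\Gamma \cap M_{Q_1}$ has finite index in $\Gamma^{Q_1}$ (the image of $\Gamma \cap Q_1$ in $M_{Q_1}$), a standard consequence of reduction theory, and the cover is \emph{common} to both ends because a shared split component gives $M_{Q_1} = M_{Q_2}$, hence the same group and the same boundary symmetric space upstairs. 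Without this identification your construction does not produce a well-defined covering of either boundary locally symmetric space.
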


\begin{prop}\label{prop:5_6}\cite[Proposition 2.10]{Ji2001}
Assume that the $\Q$-rank of $\Gamma \backslash X$ is equal to one as above. Then for any two(not necessarily different)ends of $\Gamma \backslash X$, there are countably infinitely many smooth families of scattering geodesics between them, and the spectrum of sojourn times of all scattering geodesics forms a discrete sequence of points in $\R$ of finite multiplicities.
\end{prop}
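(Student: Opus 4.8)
The plan is to reduce Proposition~\ref{prop:5_6} to a counting problem for $\Gamma$-orbits of rational parabolic subgroups, in direct analogy with Guillemin's construction in the hyperbolic case (Theorem~\ref{theorem:2_4}), and then to extract discreteness and finite multiplicity of the sojourn spectrum from the proper discontinuity of the $\Gamma$-action together with the reduction theory of Proposition~\ref{prop:3_5}.

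First I would fix representatives $Q_1,Q_2$ of the two $\Gamma$-conjugacy classes of proper rational parabolic subgroups labelling the two ends; since the $\Q$-rank is one, each has a one-dimensional split component. By Proposition~\ref{prop:5_1} every scattering geodesic is $\pi(\tilde\sigma)$ with $\tilde\sigma=(z,\exp_Q(tH),n)$, whose backward ray runs into a $\Gamma$-translate of $Q_1$ and whose forward ray runs into a $\Gamma$-translate of $Q_2$. Using the $\Gamma$-action I would normalize the backward end to be exactly $Q_1$, the residual freedom being the stabilizer $\Gamma_{Q_1}=\Gamma\cap Q_1$. The forward end is then a parabolic $P=\gamma Q_2\gamma^{-1}$ which, because the two rays are the two ends of a single geodesic, must be opposite to $Q_1$ (they share a common Levi and hence determine exactly one geodesic direction in $X$). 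Thus the smooth families of scattering geodesics between the two ends are in bijection with
\[
\{\,P=\gamma Q_2\gamma^{-1}:\gamma\in\Gamma,\ P \text{ opposite to } Q_1\,\}\big/\Gamma_{Q_1},
\]
each family being the one furnished by Proposition~\ref{prop:5_5} and parametrized by the common finite cover $X_{12}$. Countability is then immediate, since $\Gamma$ is a discrete, hence countable, subgroup of $G$. For infinitude I would show that the $\Gamma$-orbit of the boundary point of $Q_2$ accumulates at the boundary point of $Q_1$; parametrizing opposite parabolics by the unipotent radical $N_{Q_1}$ and using that $\Gamma\cap N_{Q_1}$ is a cocompact lattice in $N_{Q_1}$, this reproduces the countably infinite family of admissible elements in Guillemin's argument.

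For the spectral statement I would write the sojourn time $T_\sigma=t_2(s)-t_1(s)-2s$ as an explicit $\Gamma_{Q_1}$-invariant function of the configuration $(Q_1,P)$, namely (up to the normalizing shift $2s$) the signed $A_{Q_1}$-height separating the canonical horospheres attached to $Q_1$ and to $P$. The decisive property is that $T_\sigma$ is a \emph{proper} function of the discrete parameter $P$: whenever $T_\sigma\le C$, the lifted geodesic is forced by the precise reduction theory of Proposition~\ref{prop:3_5} to meet the compact core $X_r$ in a uniformly bounded set, so proper discontinuity of $\Gamma$ on $X$ leaves only finitely many $\Gamma_{Q_1}$-classes below any bound $C$. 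Hence each sojourn value is attained with finite multiplicity and the values have no finite accumulation point; taking the union over the finitely many ordered pairs of ends preserves discreteness and finite multiplicity, proving the proposition.

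The hard part will be this coercivity estimate: converting \emph{bounded sojourn time} into \emph{lifted geodesic confined to a fixed compact transversal region}. It requires controlling, through reduction theory, how far into the $P$-end a geodesic must travel before becoming distance-minimizing, and ruling out that long excursions through other Siegel ends could keep $T_\sigma$ small. I would isolate this as a lemma comparing $T_\sigma$ with a height (translation-length) parameter attached to $\gamma$, after which discreteness of $\Gamma$ and the finiteness of the decomposition in Proposition~\ref{prop:3_5} close the argument.
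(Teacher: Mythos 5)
The paper does not prove this statement itself; it defers entirely to \cite[Proposition 2.10]{Ji2001}, and your proposal follows essentially the same route as that cited argument: parametrize families by $\Gamma$-classes of parabolics opposite to $Q_1$, then derive discreteness and finite multiplicity of the sojourn spectrum from discreteness of $\Gamma$. The ``coercivity'' step you flag as the hard part is exactly what Proposition~\ref{prop:5_8} supplies --- the sojourn time equals $\|\log A\|$ read off from the Bruhat decomposition of $\gamma$ --- after which boundedness of the sojourn time confines the relevant $\gamma$ to a compact set modulo the cusp subgroups and finiteness follows.
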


\pagebreak

Finally, the following theorem describes a relationship between sojourn times of scattering geodesics and geodesic flow on normal bundle associated to sections of ends associated to various parabolic subgroups.

\begin{prop}\cite[Proposition 2.11]{Ji2001}
Assume that the $\Q$-rank of $\Gamma \backslash X$ is equal to one as above. Let $Q_1,Q_2,...,Q_n$ denote a set of representatives of $\Gamma$- conjugacy classes of rational parabolic subgroups of $G$. Let $\Psi^t$ be the geodesic flow in the tangent bundle $T(\Gamma \backslash X)$ minus the zero section. For $s >> 0$, let $Y_{Q_i,s}$ denote the section at height s of the end associated to $Q_i$. 

Then $\Psi^t(NY_{Q_i,s}) \cap NY_{Q_j,s}$ iff one of the following two conditions is satisfied.

\begin{itemize}
    \item[$\bullet$] $t = 0$, in which case $Q_i = Q_j$ along with $Y_{Q_i,s} = Y_{Q_j,s}$. 
    
    \item[$\bullet$] $|t| \neq 0$, in which case $|t|-2s$ is the sojourn time associated to a family of scattering geodesic associated to ends corresponding to $Q_i$ and $Q_j$, and each connected component of this family is parametrized by a common finite covering space $X_{12}$ of the boundary locally symmetric spaces $X^{Q_1}$ and $X^{Q_2}$.
\end{itemize}
\end{prop}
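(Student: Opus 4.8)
The plan is to interpret the flow-intersection condition geometrically and reduce it to the characterization of eventually distance minimizing geodesics in Proposition~\ref{prop:5_1}. The starting observation is that, since the $\Q$-rank is one, each $\a_{Q}$ is one-dimensional, so the section $Y_{Q,s}$ is a codimension-one submanifold whose unit normal fibre splits into the two rays $N^{+}Y_{Q,s}$ and $N^{-}Y_{Q,s}$ pointing in the $\pm\a_{Q}^{+}$ directions. In the horospherical coordinates $(z,\exp_{Q}(tH),n)$ a unit vector in $N^{+}Y_{Q,s}$ is exactly a purely radial velocity ($H\in\a_{Q}^{+}$, $\|H\|=1$), and by Proposition~\ref{prop:5_1} the geodesic it generates is distance minimizing into the $Q$-end; the same holds in backward time for $N^{-}Y_{Q,s}$. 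For $s >> 0$ the sections lie in the clean product region of each end supplied by the reduction theory of Proposition~\ref{prop:3_5}, where a geodesic that is radial at one section crossing remains radial, so ``crossing a section normally deep in an end'' is equivalent to ``travelling along a distance-minimizing ray of that end.''

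First I would dispose of the degenerate case. If $\Psi^{0}(NY_{Q_i,s})\cap NY_{Q_j,s}\neq\emptyset$ then the two sections share a point; but for $s >> 0$ the Siegel ends attached to inequivalent representatives $Q_i,Q_j$ are disjoint by Proposition~\ref{prop:3_5}, forcing $Q_i=Q_j$ and hence $Y_{Q_i,s}=Y_{Q_j,s}$. This is the first bullet, and the reverse implication there is trivial since $\Psi^{0}$ is the identity.

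For the main case, suppose $v\in\Psi^{t}(NY_{Q_i,s})\cap NY_{Q_j,s}$ with $t\neq 0$; without loss of generality $t>0$. Let $\sigma$ be the geodesic with $\dot\sigma(t)=v$, so that $\dot\sigma(0)=\Psi^{-t}(v)\in NY_{Q_i,s}$. The radial-rigidity observation applied at both crossings shows the velocity must point inward along $Q_i$ at time $0$ (lie in $N^{-}Y_{Q_i,s}$) and outward along $Q_j$ at time $t$ (lie in $N^{+}Y_{Q_j,s}$)---any other sign pattern would trap $\sigma$ inside a single end and preclude a second, distinct section crossing. Hence $\sigma$ is distance minimizing into the $Q_i$-end as the time goes to $-\infty$ and into the $Q_j$-end as it goes to $+\infty$, i.e. it is a scattering geodesic in the sense following Proposition~\ref{prop:5_1}. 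Reading off the crossing times in the definition of the sojourn time gives $t_1(s)=0$ and $t_2(s)=t$; because, with the Killing-form normalization, the height coordinate is arc length along the radial direction, each of the two legs out to height $s$ accounts for one copy of the normalization term, so $T_{\sigma}=t_2(s)-t_1(s)-2s=|t|-2s$. Conversely, any scattering geodesic with sojourn time $T$ between the ends of $Q_i$ and $Q_j$ yields, at $t=T+2s$, a vector in $\Psi^{t}(NY_{Q_i,s})\cap NY_{Q_j,s}$, establishing the reverse implication. That each connected component of the resulting family is parametrized by a common finite cover of the two boundary symmetric spaces is then exactly Proposition~\ref{prop:5_5}.

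The step I expect to be the main obstacle is the radial-rigidity claim: justifying that a geodesic which is merely normal to a single section $Y_{Q,s}$ must in fact be globally radial on the end, of the frozen form $(z,\exp_{Q}(t'H),n)$, rather than only tangent to the radial direction at one instant. This requires the geometry of the product region---that the $A_Q$-factor splits off isometrically far out in the end and that geodesics with radial initial velocity there stay in a single $A_Q$-orbit---and it is precisely where the $s >> 0$ hypothesis and the one-dimensionality of $\a_Q$ are genuinely used, since in higher rank the ``normal'' direction would be an entire Weyl chamber rather than a single ray and this rigidity fails.
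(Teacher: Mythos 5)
The paper offers no proof of this proposition at all---it is quoted verbatim from Ji--Zworski with only the citation---and your argument reconstructs essentially the proof given in that source: normal vectors to the height-$s$ sections generate radial, eventually distance minimizing geodesics via Proposition~\ref{prop:5_1}, two normal crossings with the correct sign pattern force a scattering geodesic with $t_2-t_1=|t|$ and hence sojourn time $|t|-2s$, and the family structure is Proposition~\ref{prop:5_5}. The step you single out as delicate (that a geodesic normal to $Y_{Q,s}$ at one instant is globally radial in the end, which uses the orthogonality of the $\a_Q$ direction in the horospherical decomposition and the $s\gg 0$ hypothesis) is indeed the substantive point, and your treatment of it is correct.
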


\begin{prop}\label{prop:5_8}
If $Q_i$ and $Q_j$ are distinct $\Gamma$-conjugacy classes representatives of rational parabolic subgroups of G. Then for $\gamma \in \Gamma$, the sojourn time of the family of scattering geodesics between $Q_i$ and $\gamma Q_j \gamma^{-1}$ is given by $||log(A)||$ (norm with respect to the Killing form on $A \in A_Q)$, where $\gamma = v_2Amwv_1$ is its associated Bruhat decomposition, where $w \in K$ is a representative in the normalizer of $A_{Q_i}$ of the non-trivial element in the associated Weyl group.
\end{prop}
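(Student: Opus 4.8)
The plan is to reduce the sojourn-time computation to a horospherical height calculation on the symmetric space $X$, and to show that of the five Bruhat factors in $\gamma = v_2 A m w v_1$ only the split component $A$ contributes to the time spent in the compact core. First I would lift the geodesic: by Proposition~\ref{prop:5_1} the scattering geodesic between the ends of $Q_i$ and $\gamma Q_j\gamma^{-1}$ is the projection of a geodesic $\tilde\sigma(t)=(z,\exp_{Q_i}(tH),n)\in X^{Q_i}\times A_{Q_i}\times N_{Q_i}$, with $H\in\a_{Q_i}^{+}$ normalized so that $\|H\|=1$, i.e. $\tilde\sigma$ is unit speed. As $t\to+\infty$ it runs into the $Q_i$ end in standard position, so its $Q_i$-height $H_{Q_i}(\tilde\sigma(t))=tH$ is linear in $t$. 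The scattering condition at the opposite end is that $\gamma^{-1}\tilde\sigma(t)$ enters the standard $Q_j$ end as $t\to-\infty$, so it suffices to track the $Q_i$-height of $\gamma^{-1}\tilde\sigma(t)$ and compare the two ends.

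The crux is the transformation law for the horospherical height under $\gamma^{-1}$, read off from the Bruhat decomposition. The guiding model is $\SL(2,\R)$, where $\gamma=\left(\begin{smallmatrix}a&b\\c&d\end{smallmatrix}\right)$ with $c\neq 0$ factors as $v_2 A w v_1$ with $A=\diag(1/c,c)$, and the classical identity $\Im(\gamma^{-1}z)=\Im(z)/|a-cz|^{2}$ gives, along $z=a/c+iy$, exactly $\Im(\gamma^{-1}z)=1/(c^{2}y)$. Here the real (unipotent) contributions of $v_1,v_2$ cancel and only the factor $c^{2}$ — the square of the $A$-entry — survives; the Weyl representative $w$, the nontrivial element normalizing $A_{Q_i}$, is what swaps the attracting and repelling endpoints of the flat and thereby forces the geodesic out through a different end (so $\gamma\notin Q_i$). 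In general I would prove the analogous statement that $v_1,v_2\in N$ and $m\in M_Q$ move only the transverse $X^{Q_i}$- and $N_{Q_i}$-coordinates — precisely the data parametrizing the family in Proposition~\ref{prop:5_5} — leaving the $\a_{Q_i}$-component of the height asymptotics shifted only by the split factor, the shift being equal to $\|\log A\|$ once the norm is normalized by $\|H\|=1$.

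Assembling this, the $Q_i$-end height grows as $t$ while the $\gamma Q_j\gamma^{-1}$-end height grows as $-t-\|\log A\|$, so the entry times into the height-$s$ sections are $t_2(s)=s$ and $t_1(s)=-s-\|\log A\|$. Hence the normalized sojourn time is $T_\sigma=t_2(s)-t_1(s)-2s=\|\log A\|$, independent of $s$, as claimed. Finally, Proposition~\ref{prop:5_5} upgrades this to the common sojourn time of the whole family, with the $M_Q$-factor $m$ and the unipotent factors accounting for the parametrization by the common finite cover of the boundary symmetric spaces. The main obstacle is the middle step: establishing the general horospherical transformation law — the Lie-theoretic replacement for the imaginary-part formula — and verifying that the unipotent and $M_Q$ factors contribute nothing to the $\a_{Q_i}$-component while the split factor contributes precisely $\|\log A\|$.
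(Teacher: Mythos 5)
Your plan is correct and is essentially the computation the paper itself carries out for the higher-rank analogue (Theorem~\ref{theorem:8_4}): reduce to the geodesic through the identity, and read off from the Bruhat decomposition that the two horosphere crossings differ in $\a_{Q_i}$-height by exactly $\log(\gamma_a)$, so that the normalized sojourn time is $\|\log A\|$. The step you flag as the main obstacle --- that the unipotent and $M$-factors leave the $\a_{Q_i}$-component of the horospherical height unchanged, i.e.\ $H_{Q_i}(nam\,g)=\log a+H_{Q_i}(g)$ --- is the standard horospherical identity that the paper likewise uses without further detail (Proposition~\ref{prop:5_8} itself is quoted from Ji--Zworski and given no separate proof in the text).
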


\begin{prop}
For every pair of (not necessarily distinct) ends of
 $\Gamma \backslash X$, the set of sojourn times associated to families of scattering geodesics is discrete with finite multiplicities.
\end{prop}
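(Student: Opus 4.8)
The plan is to reduce the statement to a counting problem for double cosets and then settle it with the discreteness of $\Gamma$ together with reduction theory. Fix representatives $Q_i, Q_j$ of the two ends. By Proposition~\ref{prop:5_8}, the families of scattering geodesics between the ends associated to $Q_i$ and $Q_j$ are indexed by those $\gamma \in \Gamma$ lying in the big Bruhat cell (i.e.\ with non-trivial Weyl element $w$), and the sojourn time of the family attached to $\gamma$ equals $\|\log A(\gamma)\|$, where $\gamma = v_2\, A(\gamma)\, m\, w\, v_1$ with $v_1, v_2 \in N$, $A(\gamma) \in A_{Q}$ and $m \in M_Q$. First I would check that $A(\gamma)$ is genuinely a function of the double coset. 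Since the excerpt records that $\Gamma \cap Q_i$ maps into $M_{Q_i}$ (so its $A_{Q_i}$-projection is trivial), left multiplication by $\Gamma \cap Q_i$ and right multiplication by $\Gamma \cap Q_j$ only alter the unipotent factors $v_2, v_1$ and, after commuting $m$-factors through $w$ and $A$ (using that $A_Q$ commutes with $M_Q$ and that $w$ normalizes $M_Q$), only alter $m$; the factor $A(\gamma)$ is left unchanged. Hence $A$ descends to a well-defined map on $(\Gamma\cap Q_i)\backslash\Gamma/(\Gamma\cap Q_j)$, the sojourn spectrum between the two ends is exactly the image of this double-coset space under $\gamma \mapsto \|\log A(\gamma)\|$, and the assertion becomes the claim that for every $R>0$ only finitely many double cosets satisfy $\|\log A(\gamma)\|\le R$.

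Next I would establish this finiteness. In the rank-one situation $A_Q\cong\R$, so the constraint $\|\log A(\gamma)\|\le R$ confines $A(\gamma)$ to a fixed compact interval of $A_Q$; geometrically this says the horospheres bounding the ends of $Q_i$ and $\gamma Q_j\gamma^{-1}$ are seen at bounded relative height, i.e.\ the relevant Siegel sets come within a bounded distance in $X$. I would then invoke the Siegel finiteness property underlying reduction theory (Proposition~\ref{prop:3_5}): a Siegel set meets only finitely many of its $\Gamma$-translates, and more generally only finitely many $\Gamma$-translates of the Siegel set attached to $Q_j$ enter any fixed bounded neighbourhood of the Siegel set attached to $Q_i$. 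Since distinct double cosets with bounded $A$-component produce distinct such translates meeting a common compact region, only finitely many double cosets can satisfy $\|\log A(\gamma)\|\le R$. This simultaneously gives local finiteness of the sojourn spectrum (discreteness) and finiteness of the number of families realising any single value (finite multiplicity), and it handles coincident ends $Q_i=Q_j$ by the same Bruhat analysis applied to $\gamma\notin Q_i$.

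To make the Siegel-theoretic step concrete and to check the bookkeeping, I would run the model case $\Gamma = SL(2,\Z)$ with $Q_i = Q_j$ the stabiliser of the cusp at $\infty$ in parallel. There the Bruhat decomposition gives $A(\gamma) = \mathrm{diag}(c^{-1}, c)$ with $c$ the lower-left entry of $\gamma$, so $\|\log A(\gamma)\|$ is a fixed multiple of $|\log|c||$; the scattering condition forces $c\ne 0$, hence $|c|\in\{1,2,3,\dots\}$, and $\|\log A(\gamma)\|\le R$ allows only finitely many integer values of $|c|$, each arising from finitely many double cosets $\Gamma_\infty\backslash\,\cdot\,/\Gamma_\infty$ (indexed by a residue $a \bmod c$ coprime to $c$). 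This confirms both discreteness and finite multiplicity and exhibits the sojourn times as a set accumulating only at infinity, exactly the expected analogue of Guillemin's spectrum.

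The step I expect to be the main obstacle is controlling the noncompact Levi factor $M_Q$: when $\mathrm{rank}_\R(\boldsymbol G) > \mathrm{rank}_\Q(\boldsymbol G)=1$, the factor $M_Q$ is noncompact, so a bound on $A(\gamma)$ alone does not bound $\gamma$ in $G$, and one cannot naively apply discreteness of $\Gamma$ to the raw group elements. The resolution is that we are counting families rather than individual geodesics: by Proposition~\ref{prop:5_5} each family is parametrised by a finite cover of the boundary locally symmetric space $\Gamma^{Q}\backslash X^{Q}$, so the $M_Q$-direction is precisely the parametrising direction within a family and is quotiented away in the double-coset count, leaving the argument governed by the $A$- and $N$-data, where the lattice property of $\Gamma\cap N$ in $N$ together with Siegel finiteness closes it. Alternatively, since Proposition~\ref{prop:5_6} already records discreteness and finite multiplicity for the full sojourn spectrum over all pairs of ends, the present statement follows a fortiori by restricting attention to the single pair $(Q_i,Q_j)$.
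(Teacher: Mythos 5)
Your proposal is correct, and in fact the paper offers no independent proof of this statement: it appears in the survey of Ji--Zworski, and the only justification implicit in the text is exactly your closing observation, namely that it follows a fortiori from Proposition \ref{prop:5_6}, which already asserts discreteness and finite multiplicity of the full sojourn spectrum. Your longer argument --- reducing to the double cosets $(\Gamma\cap Q_i)\backslash\Gamma/(\Gamma\cap Q_j)$, checking via Proposition \ref{prop:5_8} that the $A$-component of the Bruhat decomposition is a double-coset invariant, and bounding the number of double cosets with $\|\log A(\gamma)\|\le R$ by the Siegel finiteness property --- is a faithful reconstruction of the proof in the cited reference, and your handling of the noncompact Levi (the $M_Q$-direction being absorbed into the parametrising space of a single family) correctly addresses the one point where a naive discreteness argument would fail.
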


We are now ready to state the main result from \cite{Ji2001}.

Let $\Gamma \backslash X$ be a locally symmetric space of $\Q$ rank one. Let $R_1,R_2,...,R_d$ be a set of representatives of $\Gamma$ conjugacy classes of rational parabolic subgroups of $G$. Denote by 
$C_i$ the set of $L^2$ eigenvalues associated to the Laplacian for the boundary locally symmetric space $S_{R_i}$. Then for $\eta \in \bigcup_{i=1}^d C_i$, define the $\mu$-eigenspace $E_{\mu}$ by the direct sum
$\oplus _{j=1}^d \mu_{E}((L^2(S_{R_j})))$, where $\mu(E)(L^2(X))$ is the eigenspace with eigenvalue $\mu$. Choose an orthonormal set of basis $\psi_1,\psi_2,...,\psi_D$ for $E_{\mu}$ with
$D = dim(E_\mu)$ such that each $\psi_n$ belongs to a unique $\mu_E(L^2(S_{R_{F(n)}}))$, where $1 \leq F(n) \leq D$. Denote by $S^{\mu}_{n_1n_2}(\tau)$, the component of the scattering matrix for the eigenvalue $\mu$ with respect to the orthonormal basis described above.
Let $\mathcal{T}_{m_1 m_2}$ denote the set of sojourn times between the ends of $Q_{m_1}$ and $Q_{m_2}$, where $m_i = F(n_i)$.

\begin{prop}
    sing supp $\mathcal{F}(S^{\mu}_{n_1n_2}(\tau)) \subset \mathcal{T}_{m_1m_2}$, where $\mathcal{F}$ denotes the Fourier transform. Denoting by $J = \mathcal{F}(S^{\mu}_{n_1n_2}(\tau))$, we also have
    \begin{equation*}
        \scalebox{0.85}[1]{$J = \sum_{T \in \mathcal{T}_{m_1m_2}} e^{-\rho T} \frac{(2 \pi)^{1-(1/2)e_{T,m_1m_2}} }{q_{m_1} q_{m_2}}  \big ( \int_{X^T_{m_1m_2}}\pi^*_{m_1,T}\phi_i\pi^*_{m_2,T}\phi_j)
        \sum_{\pm} (\tau -T \pm i0)^{-1/2(1+e_{T,m_1m_2})}(1+g^T_{m_1m_2}(\tau))$}
    \end{equation*}
    where $e_{T,m_1m_2}$ is the dimension of the parametrizing space for the family of scattering geodesics between the ends of $Q_{m_1}$ and $Q_{m_2}$.
\end{prop}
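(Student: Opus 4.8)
The plan is to adapt Guillemin's argument (Theorem~\ref{theorem:2_4}) to the $\Q$-rank one setting, where the decisive input is a horospherical integral representation of the scattering matrix obtained from the constant term formula for Eisenstein series. First I would express the entries $S^\mu_{n_1 n_2}(\tau)$ through the intertwining operators $C^\mu_{Q|Q'}(s,\lambda)$ using the constant term identity stated just before Proposition~\ref{prop:4_4}: restricting $E_Q(x,\lambda,\psi)$ along $Q'$ and reading off the coefficient of $e^{(\tau_{Q'}+s\lambda)H}$ isolates the scattering matrix as a horospherical integral over $(\Gamma\cap N_{Q'})\backslash N_{Q'}$. Because the $\Q$-rank is one we have $\dim\a_Q=1$, so $\tau$ is a single real spectral parameter and the problem reduces to a one-variable singular-support computation, exactly as in the hyperbolic case.

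Next I would unfold the defining sum $E_Q(x,\lambda,\psi)=\sum_{\gamma\in(\Gamma\cap Q)\backslash\Gamma} e^{(\tau_Q+\lambda)(H_Q(\gamma x))}\psi(\pi_Q(\gamma x))$ inside the horospherical integral and sort the terms by the Bruhat decomposition of each $\gamma$. As in the hyperbolic picture, where only those $\gamma$ carrying a boundary point to the opposite cusp yield a genuine scattered ray, only the $\gamma=v_2 A m w v_1$ with nontrivial Weyl component $w$ contribute to the oscillatory behavior; by Proposition~\ref{prop:5_8} each such $\gamma$ carries the sojourn time $T=\|\log A\|$ of the corresponding scattering geodesic between the ends of $Q_{m_1}$ and $Q_{m_2}$. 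Collecting the contributions with a fixed $T$ and integrating over the unipotent directions transverse to the flat, each term factors into the exponential weight $e^{-\rho T}$ and an $e^{-iT\tau}$ phase, a profile integral of the form $\int_{\R^{e}}(1+|w|^2)^{-(\cdots)}\,dw$ over the $e_{T,m_1 m_2}$ transverse coordinates that generalizes Guillemin's $F(\tau)$, and the boundary pairing $\int_{X^T_{m_1 m_2}}\pi^*_{m_1,T}\phi_i\,\pi^*_{m_2,T}\phi_j$ over the common finite cover parametrizing the family (Proposition~\ref{prop:5_5}).

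Finally I would compute $J=\mathcal{F}(S^\mu_{n_1 n_2}(\tau))$ term by term. The $e$-dimensional profile integral is the analogue of $F(\tau)$ whose inverse Fourier transform is a homogeneous distribution singular only at the point $T$, with order $-\tfrac12(1+e_{T,m_1 m_2})$, while the prefactor $(2\pi)^{1-(1/2)e_{T,m_1 m_2}}/(q_{m_1}q_{m_2})$ records the normalization of the Haar measures on the $N_{Q'}$-directions together with the volumes $q_{m_i}$ of the unipotent quotients. Since each term is singular only at $\tau=T$ and smooth elsewhere, and since by Proposition~\ref{prop:5_6} the sojourn times form a discrete set of finite multiplicity, the singular support of $J$ is contained in $\mathcal{T}_{m_1 m_2}$; the factor $1+g^T_{m_1 m_2}(\tau)$ then collects the subleading smooth corrections coming from the full, rather than the leading-order, transverse integral.

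The hard part will be the second step: showing that the non-scattering double cosets (those with trivial Weyl component) contribute only smooth pieces to $J$, which requires decay estimates on the tail of the Eisenstein sum that are uniform in the spectral parameter, and justifying the interchange of this sum with the horospherical integral for $\Re(\lambda)\gg 0$ before continuing meromorphically. A second delicate point is verifying that the transverse profile integral produces exactly the homogeneity $-\tfrac12(1+e_{T,m_1 m_2})$; this is where the $\Q$-rank one hypothesis is genuinely used, since it forces the flat direction to be one-dimensional and the sojourn time to be a scalar, so that a single homogeneous singularity appears at each $T$.
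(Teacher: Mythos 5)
The paper does not actually prove this proposition: it is quoted verbatim from Ji--Zworski and the text defers entirely to Theorem 2 of \cite{Ji2001}, describing the method only as ``expressing the scattering matrix as a horocyclic integral of an Eisenstein series'' in the spirit of Guillemin. Your outline reconstructs exactly that strategy --- constant term of $E_Q$ along $Q'$ to isolate $C^{\mu}_{Q|Q'}(s,\lambda)$, unfolding over $(\Gamma\cap Q)\backslash\Gamma$ sorted by Bruhat cells, identification of the sojourn time with $\|\log A\|$ via \autoref{prop:5_8}, and a term-by-term singularity analysis --- so at the level of architecture you are following the same route as the cited proof rather than a genuinely different one.

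There is, however, one concrete soft spot in your third step. You attribute the singularity order $-\tfrac12(1+e_{T,m_1m_2})$ to a profile integral ``over the $e_{T,m_1m_2}$ transverse coordinates'' in the unipotent directions. But $e_{T,m_1m_2}$ is by definition the dimension of the parametrizing space $X^T_{m_1m_2}$, a finite cover of the boundary locally symmetric spaces (\autoref{prop:5_5}), and this is in general not equal to the number of unipotent coordinates appearing in the horospherical integral (already for $G=\SO(n,1)$ the boundary symmetric space of the minimal parabolic is a point while $\dim N_Q=n-1$). In the actual argument the exponent arises from a stationary-phase computation in which the family of scattering geodesics is the critical manifold of the phase; the factor $\int_{X^T_{m_1m_2}}\pi^*_{m_1,T}\phi_i\,\pi^*_{m_2,T}\phi_j$ is the integral of the amplitude over that critical manifold, not an independent boundary pairing you can simply append. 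As written, your accounting would produce the wrong homogeneity whenever $\dim X^{Q}$ differs from the codimension-one count you are implicitly using, so this step needs to be replaced by the critical-manifold version of stationary phase (or by a direct appeal to the asymptotics of the intertwining integral as in Harish-Chandra) before the claimed exponent is justified. The remaining issues you flag --- smoothness of the trivial Bruhat-cell contribution and the interchange of sum and integral for $\Re(\lambda)\gg 0$ --- are real but standard, and your identification of where the $\Q$-rank one hypothesis enters is correct.
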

For the proof and more details about some of the notations refer to Theorem 2 in \cite{Ji2001}.

\section{Scattering flats in locally symmetric spaces}

To make sense of this discussion we will assume that the $rank_{\Q}{\boldsymbol{G}} = rank_{\R}(G)$ for the rest of this Chapter, (for example think of $G$ as $SL(n,\R)$).

We will introduce higher dimensional analogues of scattering geodesics in this section and study their properties. We start by studying certain embedded submanifold of $X$, note that since $X$ is a symmetric space it can be shown that there is a one--one correspondence between flat totally geodesic submanifolds of $X$, and abelian subalgebras of $\gg$.

Let $Q$ be a rational parabolic subgroup $G$, with rational Langlands decomposition given by $Q=M_Q \times A_Q \times  N_Q$, and let $\Sigma^+(Q,A_Q)$ be the set of positive roots corresponding to the adjoint action of $A_Q$ on $n_Q = Lie(N_Q)$. Let $\a_Q^+$ be the positive chamber in $\a_Q$ with respect to the above root system, choose 
$H_1,H_2,...H_q \in \a_Q^+$ such that, where $H_1,H_2,...,H_q$ are linearly independent with $1\leq q \leq Rank(Q)$. Now consider the totally geodesic flat submanifold $\tilde{S}$ given by the set of points $(x,exp_Q(t_1H_1+...+t_qH_q),n)$ with $(x,n) \in (X^Q \times N_Q)$ and $t_i \in \R$. Note that this defines an isometric embedding of the Lie algebra $\a_Q$ into $X$, since the restriction of the metric from $X$ on $\tilde{S}$
is precisely the Killing form on $\a_Q$.

Denoting by the canonical projection $\pi : X \longrightarrow \Gamma \backslash X$, note that $S = \pi(\tilde{S})$ is a flat embedded submanifold of $\Gamma \backslash X$. Then note that for $r>>0$, with $T_r  = rT$ as before from \autoref{prop:3_5}, then it can be clearly seen that $S$ is not contained in the compact core $X^T_r$ for $r>>0$, this is because denoting by the geodesic $\tilde{\sigma} = (x,exp_Q(t_1H_1),n) \subset S$, then by \autoref{prop:5_1} the geodesic $\sigma  = \tilde{\sigma} \subset S$ in $\Gamma \backslash X$ is eventually distance-minimizing and hence runs of to infinity in both direction and is not contained in the compact core $X^T_r$.

Further note that, owning to the precise reduction theory, one can choose $r>>0$ and $T_r = rT$, then thinking of the flat $S$, as a map from $\a_Q$ to $\Gamma \backslash X$, note that the restriction of this map to the shifted Weyl chamber $\a_Q^+(T_r)$ is an isometric embedding into $\Gamma \backslash X$.

We need to discuss in more detail the embedding of $\pi(\tilde{S})$ into $\Gamma \backslash X$ which we describe now.

Let $\tilde{x}_0 \in X$, we define the Dirichlet fundamental domain associated $\tilde{x}_0$ with respected to the action of $\Gamma$ on $X$, by

\begin{equation}
    D_{\Gamma}(\tilde{x}_0) = \{x \in X | d_X(x,\tilde{x}_0) \leq d_X(x,\sigma \tilde{x}_0) | \forall \sigma \in \Gamma\}
\end{equation}
Now denote by $x_0 = \pi(\tilde{x}_0)$, and let $\gamma$ be a geodesic $\Gamma \backslash X$ such that $\gamma(0) = x_0$ along with its lift $\tilde{\gamma}$ to $X$ satisfying $\tilde{\gamma}(0) = \tilde{x}_0$, then it is a well-known result that $\gamma$ is eventually distance minimizing in $\Gamma \backslash X$ if $\tilde{\gamma}(t) \in D_{\Gamma}(\tilde{x}_0)$ for $t >>0$.

\pagebreak

A similar result holds for the scattering flat describe before which we will now discuss, let $\tilde{x}_0 = (x,exp_Q(t_1^0H_1+...+t_q^0H_q),n) \in S$, then for $t_i >> t_i^0$ 

we have,
$\tilde{S} \ni x =  (x,exp_{Q}(t_1H_1+...+t_qH_q),n) \in D_{\Gamma}(\tilde{x}_0)$ and such that we have the relation, $d_{\Gamma \backslash X}(x_0,\pi(x)) = d_{\a_Q}(t_1^0H_1+...+t_q^0H_q, t_1H_1+...+t_qH_q)$

This means, thinking of the flat $S$ as an embedding map from $\a_Q$ to $\Gamma\backslash X$, we have that the restriction of this map to $\a_Q^+(T,r)$ is an isometric embedding into $\Gamma \backslash X$ for $r >> 0$.
\begin{defn}\label{def:5_3}
Let $\Gamma \backslash X$ be a locally symmetric space as before, then a flat embedded submanifold $S$ of $\Gamma \backslash X$ is called a scattering flat if it is of the form $\pi(\tilde{S})$ where $S$ is a totally geodesic flat submanifold of $X$ associated to a parabolic subgroup $Q$ as before.
\end{defn}

\begin{defn}\label{def:6_3}
Let $\a$ be an abelian subalgebra of $\gg$, denote by $\Sigma$ the set of roots corresponding to the adjoint action of $\a$ on $\g$, then a scattering flat is a flat submanifold $S$ of $\Gamma \backslash X$ of dimension equal to rank$(\a)$, given by a smooth embedding $\Psi: \a \longrightarrow \Gamma \backslash X$, such that for any choice of a full subset of positive roots $\Sigma^+$ of $\Sigma$ wit the associated positive chamber $\a^+$, we have that the restriction of $\Psi$ to the shifted Weyl chamber $\a^+(\Sigma, H)$ is an isometric embedding onto $\Gamma \backslash X$, where $H \in \a^+$ with $|H| >> 0$ and
$\a^+(\Sigma,H) = \{X \in \a | \beta(X-H) > 0 \forall \beta \in \Sigma^+\} $
\end{defn}

In light of the discussion earlier, we see that the two definitions stated previously are in fact equivalent in the sense that, any scattering flat in $\Gamma \backslash X$ is of the form $S  = \pi(\tilde{S})$, where $S = (z,exp_Q(t_1H_1+...t_qH_q),n), t_i \in \R$ is a flat submanifold of $X$, where $q = rank(Q)$ with $Q$ being a rational parabolic subgroup 
with the associated horospherical decomposition of $X$ given by the product $X^Q \times A_Q \times N_Q$, $H_1,...,H_q \in \a_Q^+$ are linearly independent, where $\a_Q^+$ is the positive Weyl chamber associated a set of positive roots $\Sigma^+(Q,A_Q)$ corresponding to the adjoint action of $\a_Q$ on $n_Q = Lie(N_Q)$, $(z,n) \in X^Q \times N_Q$.

Before we proceed further, let's recall the precise reuction theory of $\Gamma \backslash X$ from \autoref{prop:3_5}. It essentially says that $\Gamma \backslash X$ can be decomposed as a disjoint union of a compact core and a finite number of Siegel ends associated to a set of representatives of rational parabolic subgroups corresponding to the $\Gamma$-conjugacy classes in $G$.

Choose a scattering flat in $\Gamma \backslash X$ of the form $S = \pi(\tilde{S})$ as before, choose an associate non-$\Gamma$-conjugate rational parabolic subgroup $Q_1$ with the same split component as that of $Q$(For the moment choose $Q_1$ to be different from $Q$), so $Q_1$ has a rational Langlands decomposition given by $Q_1=M_Q\times A_Q \times N_{Q_1}$ with $N_Q \neq N_{Q_1}$ and

\begin{equation*}
    n_1 = Lie(N_{Q_1})  = \bigoplus_{\beta_1 \in \Sigma^{+}_1}n_Q^{\beta_1}
\end{equation*}
 with $\Sigma^{+}_1$ being the set of positive roots for the adjoint action of $A_Q$ on $n_{1}$. 
 
\pagebreak
 
 Denote by $\a_1^+$ the associated positive chamber, then since the Weyl group works transitively on Weyl chambers, we can choose elements $Z_1,...,Z_q \in \a_1^+$ corresponding to the set $\{H_1,H_2,...,H_q\}$. Finally, from (add reference) about precise reduction theory, we can (after a possible reordering), make sure that $R_1$ is $\Gamma$ conjugate to $Q$ and $R_2$ is $\Gamma$ conjugate to $Q_2$, the from the discussions we have been having, we see that the scattering flat $S$ runs between the Siegel ends of the form $\Omega \times A^+_{R}(T_r)$ showing up in (add reference), corresponding to the two parabolic subgroups $R =R_1,R_2$.

 Note that fixing $Q$ as before but replacing $Q_1$ by $\gamma Q_1 \gamma^{-1}$, $\gamma \in \Gamma$, we can produce a different scattering flat running between the same Siegel end.

Note that the construction described before, can be modified slightly to give scattering flats between same Siegel end, to do this we proceed as follows, choose
$\gamma \in \Gamma, \gamma \notin Q$, and let $Q_1 = \gamma Q \gamma^{-1}$, then $Q,Q_1$ are different but are $\Gamma$ conjugate, so correspond to the same Siegel end from the reduction theory, and the construction outlined before works fine.

Also, observe that the procedure outlined before gives all the possible scattering flats running between the Siegel ends corresponding to $Q$ and $Q_1$, as well as shows that \textbf{the necessary and sufficient condition for the existence of scattering flats, which is that we just need $Q$, $Q_1$ to be associate. This will be important later because the scattering matrices associated to two non-associate rational parabolic subgroups always vanish identically}.

Finally, let's make an observation that unlike a scattering geodesic, a higher dimensional scattering flat can scatter between multiple pairs of Siegel ends.

We work as before $Q,Q_1$ be a pair of associate rational parabolic subgroup with a common split component as before, the next thing we want to do is come up with a parametrization space for scattering flats running between the associated Siegel ends. We proceed as follows,

\begin{itemize}
    \item[$\bullet$] Start with the scattering flat ${S} = \pi(\tilde{S})$, where $\tilde{S} = (z,exp_Q(t_1H_1+...+t_qH_q,Id))$, where $z \in X^Q$ and $Id$ is the identity element in $N_Q$. if $z$ is chosen to be a different point $X^Q$, then the resulting scattering flat in $\Gamma \backslash X$ will have the same \textbf{sojourn vector (An $\a_Q$ valued parameter that keep tracks of subset of the flat contained in the compact core $X_r$, it is independent of $r$, it will be defined in more detail in the next section}). Denote by $\Gamma^Q$ the image of $\Gamma \cap Q$ in $M_Q$, under the projection map $Q \longrightarrow M_Q$.

    Consider the two flats
    $\tilde{S}_i = (z_i,exp_Q(t_1H_1+...+t_qH_Q),Id)$ in $X$, where $z_i \in X^Q$, then $S_i  = \pi(\tilde{S}_i)$ represent the same scattering flat in $\Gamma \backslash X$ if and only if $\exists \gamma \in \Gamma | \gamma z_1  = z_2 \implies \gamma \in \Gamma \cap M_Q \implies \gamma \in \Gamma_Q \cap M_Q$, where $\Gamma_Q  = \Gamma \cap Q$. This means the scattering flat $S$ belongs to a smooth family parametrized by $(\Gamma_Q \cap M_Q) \backslash X^Q$ with a common sojourn vector.
    Let's remark that since $\Gamma_Q \cap M_Q$ is a finite index subgroup in $\Gamma^Q$, and so $(\Gamma_Q \cap M_Q) \backslash X^Q$ is a finite covering space of $S_Q  = \Gamma ^Q \backslash X^Q$.
    Also, note that since $M_{Q_1} = M_Q$ we have,
    $\Gamma_Q \cap M_Q  = \Gamma \cap M_{Q_1} = \Gamma_{Q_1} \cap M_{Q_1}$ and hence the spaces $(\Gamma_Q \cap M_Q) \backslash X^Q$ and $(\Gamma_{Q_1} \cap M_{Q_1}) \backslash X^{Q_1}$ can be identified.

    \item[$\bullet$] Suppose we start with a flat submanifold $\tilde{S} = (z,exp_Q(t_1H_1+...+t_qH_Q),n) $ with $(z,n) \in X^Q \times N_Q$, then we choose a new base point $x_1 \in X$, given by $x_1 = nx_0$, where $x_0 =eK \in G/K =X$. With respect to this new base point we get the following decompositions.
    
    The associated Rational Langlands decomposition of $Q$ is given by
    $Q = nM_Qn^{-1} \times nA_Qn^{-1} \times N_Q$.
    Denoting by $K_1 = n(K\cap M_Q)n^{-1}$, we have that the associated Horospherical decomposition of $X$ is given by
    $X = (nM_Qn^{-1})/K_1 \times nA_Qn^{-1} \times N_Q$.  
    
    With respect to this horospherical decomposition, the flat $\tilde{S}$ is represented as

    $\tilde{S} = (nzn^{-1}K_1,nexp_Q(t_1H_1+...+t_qH_q)n^{-1},Id)$.
    
    Note that $S  = \pi(\tilde{S})$ is a scattering flat in $\Gamma \backslash X$ running between Siegel ends corresponding to the rational parabolic subgroup $Q$ and $\tilde{Q} = nQ_1n^{-1}$ with the common split component $nA_Qn^{-1}$. We will again try to analyze the parametrizing space of such Scattering flats just as we did before.
    We can choose the $X^Q$ component of $\tilde{S}$ to be arbitrary and the resulting scattering flat $S = \pi(\tilde{S})$ would have the same sojourn vector as before.
    We will once again deal with Horospherical decomposition of $X$ with respect to the split component $A_Q$, choose two such  flats  in $X$ as before, namely $\tilde{S}_i = (z_i,exp_Q(t_1H_1+...+t_qH_Q),n)$, where
    $z_i \in X^Q$, then the associated scattering flats $S_1,S_2$ are the same in $\Gamma \backslash X$ if and only if 
    $\exists \gamma \in \Gamma $ such that
    $\gamma n z_1  = n z_2 \implies n^{-1}\gamma n \in n^{-1}\Gamma n \cap M_Q$. This shows that the scattering flat $S$ belongs to a smooth family parametrized by the space $(n^{-1}\Gamma n \cap M_Q) \backslash X^Q$.

    Since $\Gamma$ is torsion free, so is the group $n^{-1}\Gamma n \cap M_Q$, and so
    $(n^{-1}\Gamma n \cap M_Q) \backslash X^Q$ is a smooth manifold and as before it can be shown that this is a finite covering space for the associated locally symmetric spaces $S_Q$ and $S_{Q_1}$. Summarizing all this we get the following theorem,
    
    \begin{thm}\label{theorem:6_4}
    Let $\Gamma \backslash X$ be a locally symmetric space of $\Q$ rank greater than one,  for two rational parabolic subgroups $Q_1$ and $Q_2$ of $G$; a scattering flat exists between the associated Siegel ends(which could be the same) if and only if $Q_1$ and $Q_2$ are associate. In case $Q_1$ and $Q_2$ are associate, the choice of a common split component of $Q_1$ and $Q_2$ gives rise to a smooth family of scattering flats between the corresponding Siegel ends parametrized by a common finite cover of $S_{Q_1}$ and
$S_{Q_2}$.
    \end{thm}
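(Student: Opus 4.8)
The plan is to prove both implications of the existence statement and then read off the parametrization from the horospherical bookkeeping already assembled above. For the ``if'' direction I would start from the hypothesis that $Q_1$ and $Q_2$ are associate, so that after conjugating by a suitable element of $\mathbf{G}_{\Q}$ I may arrange a common split component $A$, yielding rational Langlands decompositions $Q_1 = M\times A\times N_{Q_1}$ and $Q_2 = M\times A\times N_{Q_2}$ with the same Levi factor $M$. Choosing linearly independent $H_1,\dots,H_q\in\a^+$ in the positive chamber attached to $Q_1$ and forming $\tilde{S}=(z,\exp(t_1H_1+\cdots+t_qH_q),n)$, I would invoke \autoref{prop:5_1}: along any chamber ray the restricted geodesic is eventually distance minimizing, so by the precise reduction theory \autoref{prop:3_5} the image $S=\pi(\tilde{S})$ leaves the compact core and enters the Siegel end of the parabolic whose positive chamber contains that ray. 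Since the chamber decomposition of $\a$ attaches $Q_1$ and $Q_2$ to distinct chambers sharing the common $A$, the flat runs between exactly the two prescribed Siegel ends; the case where these ends coincide is recovered, as in the construction above, by taking $Q_2=\gamma Q_1\gamma^{-1}$ for some $\gamma\in\Gamma$ with $\gamma\notin Q_1$.

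For the ``only if'' direction I would argue that any scattering flat is, by \autoref{def:6_3}, attached to a single parabolic $Q$ with split component $A_Q$, and that its directions to infinity are governed by the Weyl chambers of $\a_Q$. Each such chamber determines a parabolic subgroup with the \emph{same} split component $A_Q$, and two parabolic subgroups sharing a split component are associate by definition. Hence both Siegel ends visited by the flat correspond to parabolics lying in a single associate class, forcing $Q_1$ and $Q_2$ to be associate. This conclusion is consistent with the vanishing of the scattering contributions $E_{Q|Q'}$ between non-associate parabolics recorded in the spectral-theory section, which gives an independent analytic confirmation that no scattering can occur outside an associate class.

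The parametrization assertion follows directly from the two case analyses carried out just before the statement. Fixing a common split component and writing $\tilde{S}=(z,\exp_Q(t_1H_1+\cdots+t_qH_q),n)$ with $(z,n)\in X^Q\times N_Q$, I would pass to the base point $x_1=nx_0$ so that the flat acquires trivial unipotent component in the conjugated decomposition. Two choices $z_1,z_2\in X^Q$ then produce the same scattering flat in $\Gamma\backslash X$ exactly when some $\gamma\in\Gamma$ satisfies $\gamma n z_1 = n z_2$, which forces $n^{-1}\gamma n\in n^{-1}\Gamma n\cap M_Q$; thus the family is parametrized by $(n^{-1}\Gamma n\cap M_Q)\backslash X^Q$. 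Since $\Gamma$ is neat, this group is torsion free and of finite index in $\Gamma^Q$, so the quotient is a smooth manifold finitely covering $S_Q$, and the equality $M_{Q_1}=M_Q$ lets me identify it simultaneously as a finite cover of $S_{Q_1}$ and $S_{Q_2}$.

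I expect the main obstacle to be the rigorous ``only if'' direction: one must verify that each asymptotic direction of the flat lands in exactly one Siegel set of the decomposition \autoref{prop:3_5} and correctly reads off the associated parabolic, controlling the overlap regions where adjacent chambers meet and checking that the finitely many chamber walls do not spoil the injectivity of the embedding on the shifted chamber $\a_Q^+(T_r)$. The remaining steps---the explicit construction and the quotient bookkeeping---are comparatively routine given the reduction theory and the equivalence of the two definitions of scattering flat established above.
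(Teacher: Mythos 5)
Your proposal is correct and takes essentially the same route as the paper: the ``if'' direction via the explicit flat $(z,\exp_Q(t_1H_1+\cdots+t_qH_q),n)$ built from a common split component together with the shifted-chamber isometric embedding and the reduction theory, the ``only if'' direction by reading both ends off the Weyl chambers of a single split component (hence forcing the parabolics into one associate class), and the parametrization by $(n^{-1}\Gamma n\cap M_Q)\backslash X^Q$ as a common finite cover of $S_{Q_1}$ and $S_{Q_2}$. The paper's own justification is exactly the discussion preceding the theorem statement, which you have merely reorganized into an explicit two-direction argument.
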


\end{itemize}

\pagebreak

\section{Factorization of Scattering matrices}

We will now review the factorization of scattering matrices coming from minimal rational parabolic subgroups of $G$. As a byproduct, we will come up with a construction that will be useful while discussing projections of scattering flats in the next section. We proceed as follows,

Let $Q_1$ be a minimal rational parabolic subgroup with rational rank equal to $\text{q}$ and Langlands decomposition $Q_1 = MAN_1$ along with a set of positive roots $\Sigma^{+,1}$ and a set of simple roots $\Sigma^{++,1}$ corresponding to the adjoint action of $\a = \text{Lie}(A)$ on
$n_1 = \text{Lie}(N_1)$ along with a choice of a positive chamber $C_1 \subset \a$. Further let $\Sigma$ denote the full set of roots for the adjoint action of $\a$ on $\gg = \text{Lie}(G)$ along with subspaces of $\gg$ given by 
$\gg^{\beta} = \{Y \in \gg | [H,Y] = \beta(H)Y \hspace{0.1cm}\forall H \in \a \}$ associated to every root $\beta \in \Sigma$.

Note that for any chamber $C$ in $\a$ we can associate a set of positive roots $\Sigma^+(C)$ by defining ,

\begin{equation}
    \Sigma^+(C) = \{\beta \in \Sigma | \beta(Y)>0 \forall Y \in C\}\ 
\end{equation}
Then note that $\Sigma^+(C_1) = \Sigma^{+,1}$. Now fix an $\alpha \in \Sigma^{++,1}$, then there exists a unique chamber $C_{\alpha}$ which is adjacent to $C$ and we have $-\alpha \in \Sigma^+(C_\alpha)$, denoting by $\Sigma^+(C_{\alpha})$ as simply $\Sigma^{+,2}$ we define,
\begin{equation}
    n_2 = \bigoplus_{\beta \in \Sigma^{+,2}}g^{\beta}
\end{equation}
Then defining $N_2  = exp(n_2)$ along with $Q_2 =MAN_2$, we notice that $Q_2$ is a minimal parabolic subgroup associated to $Q_1$ having a common split component $A$. 

\begin{defn}\label{def:7_3}
Two minimal parabolic subgroups are said to be adjacent if with respect to a common split component, they can be constructed as above.
\end{defn}

Since the two rational parabolic subgroups $Q_1$ and $Q_2$ are coming from adjacent chambers one can construct a nontrivial rational parabolic subgroup containing both $Q_1$ and $Q_2$, the construction is rather easy and works as follows,

Define, $\a_P = \{ H \in \a | \alpha(H) = 0 \}$, and
let $\tilde{\Sigma}$ consists of roots from $\Sigma^{+,1}$ which do not vanish identically on $\a_P$.

We now define,

\begin{equation}
    n_P  = \bigoplus_{\beta \in \tilde{\Sigma}}g^\beta   \hspace{0.1cm} \text{and}  \hspace{0.1cm} g_P = Z(\a_P) \oplus n_P
\end{equation}
Where $Z(\a_P)$ is the centralizer of $\a_P$ in $\gg$. 

\pagebreak

Now define $A_P = exp(\a_P)$ and $N_P = exp(n_P)$ and denote by $P$ the normalizer of $g_P$ in $G$ under the adjoint action. Then $P$ is a rational parabolic subgroup with split component $A_P$ such that $P$ contains both $Q_1$ and $Q_2$. For future reference we will write the Langlands decomposition of $P$ as $P = M_PA_PN_P$. Note that the rational rank of $M_P$ is one.

In order to describe the factorization, we need to set up more notation. For a fixed parabolic subgroup $Q$ with split component $A_Q$ and $z \in G_{\Q}$ define the parabolic subgroup and split component pair $(Q,A_Q)^z$ by defining $Q^z = zQz^{-1} $ and $A_Q^z  = zA_Qz^{-1}$

Choose $z_1,z_2 \in G_{\Q}$ and define parabolic subgroups, $(P_i,A_i) = (P,A_P)^{z_i}$ and 
$(Q^i,A^i) = (Q_i,A)^{z_i}$. Then by [Lemma 106, HC]
the scattering matrix $C_{Q^1|Q^2}(z_2z_1^{-1},Ad(z_1)(\lambda))$ vanishes identically unless $P_1$ and $P_2$ are $\Gamma$-conjugate, where $\lambda \in \a^* \otimes \C$. 

So, we can just focus on the case when $z_1 = \gamma$ is an element from $\Gamma$ and $z_2  = Id$. Note that in this case $P_2$ is just the previously constructed parabolic subgroup $P$ and $Q^2  =Q_2$. We can further choose a $u \in (N_P)_{\Q}$ such that $A_2^{\gamma u} = A_1$, in fact since $A_2  = A_P$ and $A_1  = A_P^{\gamma}$ we can simply choose $u=1$ and get $A_2^{\gamma}=A_1$.

Denote by $S(Q^1,Q^2)$ as the set of the maps of the form $Ad(y) $, for $y \in G_{\Q}$ such that
$Ad(y)A^1 = A^2$. Then choose $w \in S(Q^1,Q^2)$ given by $w = Ad(z_2z_1^{-1})$.
and set $\lambda_0 = Ad(\gamma)(\lambda)$.

Notice that our construction guarantees that the parabolic subgroup $(P_1,A_1)$ dominates both
$(Q^1,A^1)$ and $(Q^2,A^2)^{\gamma }$ in the sense that $Q^1,Ad(\gamma)(Q^2) \subset P_1 $ and $A_1 \subset A^1 , Ad(\gamma)(A^2)$. For simplification of notation denote by $(Q^3,A^3) = (Q^2,A^2)^{\gamma }$. Note that $Q^1,Q^3$ are associate sharing a common split component given by $A^1$ along with rational Langlands decomposition given by $Q^i = \tilde{M} A^1 N^i$, for $i =1,3$, where $\tilde{M} = \gamma M \gamma^{-1}$.

Finally, define parabolic subgroups $R_1,R_3$ of $M_{P_1}$ by $R_i = M_{P_1} \cap Q^i$, with $ i =1,3$.
Then the two parabolic subgroups are of rational rank one and have the Langlands decomposition given by, $R_i =  \tilde{M}\tilde{A}\tilde{N}^i$, with $\tilde{A} = M_{P_1} \cap A^1$ satisfying $A^1 = \tilde{A}A_{P_1}$ and $\a^1  = \tilde{a} \oplus \a_{P_1}$, with $\tilde{\a} = \text{Lie}(\tilde{A})$ and $\a_{P_1}  = \text{Lie}(A_{P_1})$. Note that the decomposition of $\a^1$ described here is in fact orthogonal with respect to the Killing form on $\a^1$. 

Choose $\lambda \in \a^* \otimes \C$ and define
$\lambda_0 =  Ad(\gamma)(\lambda)$. The splitting $\a^1  = \tilde{a} \oplus \a_{P_1}$, naturally induces a restriction of $\lambda_0$ onto $\tilde{a}$ given by $\tilde{\lambda}$. Also, let $\tilde{w}$ be the restriction of $w$ onto $\tilde{a}$. Then we have the following factorization.

\begin{equation}
    C_{Q^1|Q^3}(\gamma u w ,\lambda)  = C_{R_1|R_3}(\tilde{w} , \tilde{\lambda}_0) 
\end{equation}
For a proof refer to \cite[Lemma 116]{HarishChandra1968}.

\pagebreak
Note that an analogue of the factorization above is possible even if the parabolic subgroups $Q_1$ and $Q_2$ doesn't correspond to adjacent chambers, however in that case the right hand side of the above equation has to be replaced by a product of rank one scattering matrices. Once again, for details refer to Chapter 5 of \cite{HarishChandra1968}.

\section{Projection of Scattering flats and associated Scattering geodesics}

We will work with a pair of associate minimal rational parabolic subgroups of $G$, and the first thing we will show is that any two such associate rational parabolic subgroups have a common rational split component giving rise to a family of scattering flats with a common sojourn vector.

\begin{prop}\label{prop:8_1}
Any two associate distinct minimal rational parabolic subgroups of $G$ have a common split component.
\end{prop}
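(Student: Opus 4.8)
The plan is to reduce the statement to the existence of a single maximal $\Q$-split torus sitting inside both parabolics, and then to produce such a torus from the conjugacy theory of minimal rational parabolic subgroups. First I would unwind what a common split component means in the minimal case. Fix a minimal rational parabolic $\mathbf{Q}$ and a maximal $\Q$-split torus $\mathbf{S}\subset\mathbf{Q}$. Because $\mathbf{Q}$ is minimal and $\mathbf{S}$ is maximal $\Q$-split, the projection $\mathbf{Q}\to\mathbf{H}_Q=\mathbf{Q}/\mathbf{N}_Q$ carries $\mathbf{S}$ isomorphically onto the split center $\mathbf{C}_Q$ of the Levi quotient; hence, for the rational Langlands decomposition (\autoref{prop:3_1}) determined by this Levi, the split component is exactly $A_Q=\mathbf{S}(\R)^{0}$. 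Consequently, producing a common split component for $Q_1$ and $Q_2$ is equivalent to producing one maximal $\Q$-split torus $\mathbf{S}$ of $\mathbf{G}$ with $\mathbf{S}\subset \mathbf{Q}_1\cap\mathbf{Q}_2$, after which we simply set $A_{Q_1}=A_{Q_2}=\mathbf{S}(\R)^{0}$.

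The core step is to exhibit this common torus. Since $Q_1$ and $Q_2$ are associate minimal rational parabolics, they are conjugate under $\GQ$ (indeed all minimal rational parabolic subgroups of $\mathbf{G}$ are $\GQ$-conjugate, and all maximal $\Q$-split tori are $\GQ$-conjugate), so associateness here is automatic and consistent with the claim. I would then invoke the standard structural fact that any two minimal parabolic $\Q$-subgroups of a reductive $\Q$-group contain a common maximal $\Q$-split torus. The conceptually cleanest route is the relative Tits building of $\mathbf{G}$ over $\Q$, in which minimal rational parabolics are chambers and maximal $\Q$-split tori are apartments: the building axiom that any two chambers lie in a common apartment yields a maximal $\Q$-split torus $\mathbf{S}$ with $\mathbf{S}\subset\mathbf{Q}_1\cap\mathbf{Q}_2$. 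Alternatively one argues directly with relative Bruhat theory relative to a fixed maximal $\Q$-split torus of $\mathbf{Q}_1$, showing that the $\Q$-subgroup $\mathbf{Q}_1\cap\mathbf{Q}_2$ contains a maximal $\Q$-split torus of $\mathbf{G}$.

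With $\mathbf{S}$ in hand I would close by taking $A:=\mathbf{S}(\R)^{0}$ and verifying, exactly as in the first step, that $A$ is a legitimate split component for each $Q_i$: as $\mathbf{S}$ is a maximal $\Q$-split torus contained in the minimal parabolic $\mathbf{Q}_i$, it is the maximal split central torus of a Levi of $\mathbf{Q}_i$, so $A$ occurs as $A_{Q_i}$ for a suitable rational Langlands decomposition, giving the desired common split component. The main obstacle is entirely the rationality bookkeeping in the middle step: over $\overline{\Q}$ two parabolics share a maximal torus for free, but one must guarantee that the common torus is maximal \emph{$\Q$-split} and genuinely lies in $\mathbf{Q}_1\cap\mathbf{Q}_2$ as a $\Q$-subgroup, which is precisely where the common-apartment axiom (equivalently, the Borel–Tits conjugacy of minimal parabolic $\Q$-subgroups) does the real work.
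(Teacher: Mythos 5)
Your proposal is correct, but it proves the proposition by a genuinely different route than the paper. You reduce the claim to the standard Borel--Tits fact that any two minimal parabolic $\Q$-subgroups of a reductive $\Q$-group contain a common maximal $\Q$-split torus (equivalently, the common-apartment axiom for the rational Tits building), and then observe that for a \emph{minimal} rational parabolic the split component in the rational Langlands decomposition is exactly $\mathbf{S}(\R)^{0}$ for such a torus $\mathbf{S}$. That is a clean, conceptually transparent argument, and your remark that associateness is automatic for minimal rational parabolics is consistent with the paper's setup. The paper instead argues by hand: it writes the element $\gamma$ relating the two parabolics in its Bruhat decomposition $\gamma = u_2\gamma_a z w u_1$ relative to a fixed Langlands decomposition $Q=MAN$, and then checks directly that the conjugate $A^{u_2}$ of the standard split component by the unipotent factor $u_2$ lies in both $Q$ and $Q^{\gamma}$ (reducing the non-$\Gamma$-conjugate associate case to this one by conjugating with a suitable $k\in K$). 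The difference matters downstream: the paper's computation does not merely establish existence but exhibits the common split component explicitly and, more importantly, isolates the factor $\gamma_a\in A$, whose logarithm is then \emph{defined} to be the sojourn vector of the resulting family of scattering flats (\autoref{def:8_2}) and reappears in the sojourn-time computation of \autoref{theorem:8_4}. Your building-theoretic argument proves the proposition as stated but would need to be supplemented by essentially the paper's Bruhat computation to extract $\gamma_a$, so it buys generality and brevity at the cost of the explicit data the later sections rely on.
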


\begin{proof}
Start with a minimal rational parabolic subgroup $Q=MAN$ of $G$ and denote by $\a$ the Lie algebra of $A$. We will use the notation $Q^g = gQg^{-1}$ where $g \in G$. Then for any non-trivial $\gamma \in \Gamma, \gamma \notin Q$ the two parabolic subgroups $Q,Q^{\gamma}$, are distinct but $\Gamma$-conjugate hence correspond to the same Siegel set in the reduction theory of $\Gamma \backslash X$.

Start with the Bruhat decomposition 
$\gamma = u_2 \gamma_azwu_1$, where $u_1,u_2 \in N$, $\gamma_a \in A$ and $z \in M$, with $w \in K$, a representative in the normalizer of $A$ representing a non-trivial element of the Weyl group.

Denoting by $Q_1 = Q^{u_2^{-1}\gamma} $, then $Q_1 = Q^{\gamma_a z w u_1} = Q^{\gamma_a z w} \supset A^{\gamma_a z w} = A$, so $Q^{\gamma} \supset A^{u_2}$. Clearly, $Q \supset A^{u_2}$, hence $A^{u_2}$ is a common split component of $Q$ and $Q^{\gamma}$ and will give rise to a family of scattering flats in $\Gamma \backslash X$ scattering between the same Siegel set in the reduction of $\Gamma \backslash X$.

If $Q_1,Q_2$ are associate non-$\Gamma$ conjugate distinct minimal rational parabolic subgroups in $G$, then they correspond to different Siegel sets in the reduction theory of $\Gamma \backslash X$ in (add reference). Then for any $\gamma \in \Gamma$ the parabolic subgroups $Q_1^{\gamma},Q_2$ are still distinct, since $Q_1,Q_2$ are associate, we can choose a $k \in K$ such that $Q_1 ^k = Q_2$, Now consider the Bruhat decomposition of $k^{-1}\gamma  = u_2 \gamma_a  z w u_1$, with $u_1,u_2 \in N$, $\gamma_a \in A$ and $z \in M$, along with $w \in K$, a representative in the normalizer of $A$ representing a non-trivial element of the Weyl group.

Then the pair $(Q_1^{\gamma},Q_2)$ is conjugate to the pair $(Q_1, Q_1^{k^{-1}\gamma})$ and the proof above can again be used to produce a common split component of $Q_1,Q_2$ which in turn gives rise to a family of scattering flats in $\Gamma \backslash X$.
\end{proof}

\pagebreak
\begin{defn}\label{def:8_2}
Choose $Q$ as before, then $\text{log}(\gamma_a) \in \a$ is called the \textbf{sojourn vector} associated to the family of scattering flats.
\end{defn}

We can now summarize our discussion in the following result.

\begin{thm}\label{theorem:8_3}
Let $\Gamma \backslash X$ be a locally symmetric space of rational rank $q$. Fix two distinct minimal rational parabolic subgroups $\mathbf{Q}_1$ and $\mathbf{Q}_2$ of $\mathbf{G}$. Then, for any $\gamma \in \Gamma$ their exists a common rational split component  $A_{\gamma}$ of $Q_1$ and $Q_2$. Further suppose that with respect to the Langlands decomposition $Q_1 = MA_{\gamma}N$,
$\gamma$ admits a Bruhat decomposition given by $\gamma = u_2\gamma_azwu_1$ with $u_1,u_2 \in N$, $\gamma_a \in A_{\gamma}$, $z \in M $ and $w \in N_K(A_{\gamma})$. Then the common split component $A_{\gamma}$ gives rise to a family of $q$ dimensional scattering flats between the Siegel ends corresponding to $Q_1$ and $Q_2$ with a common sojourn vector which is defined to be
$log(\gamma_a) \in Lie(A_{\gamma})$.
\end{thm}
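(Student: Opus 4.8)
The plan is to read Theorem \autoref{theorem:8_3} as an assembly of three facts already in hand: the existence of a common split component from Proposition \autoref{prop:8_1}, the parametrization of scattering flats from Theorem \autoref{theorem:6_4}, and the naming of the sojourn vector in Definition \autoref{def:8_2}. The first thing I would check is that the hypothesis of Proposition \autoref{prop:8_1} is actually met. Since $\mathbf{Q}_1$ and $\mathbf{Q}_2$ are both minimal rational parabolic subgroups and we assume $\rank_\Q(\mathbf{G}) = \rank_\R(\mathbf{G})$, all minimal rational parabolic subgroups are $\mathbf{G}(\Q)$-conjugate and hence associate, so the proposition applies and a scattering flat between the two Siegel ends exists.

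Next I would produce the common split component $A_\gamma$ together with the required Bruhat decomposition, following the two cases in the proof of Proposition \autoref{prop:8_1}. If $Q_1$ and $Q_2$ are $\Gamma$-conjugate, I write $Q_2 = Q_1^\gamma$, take the Bruhat decomposition $\gamma = u_2 \gamma_a z w u_1$ with respect to a fixed $Q_1 = MAN$, and set $A_\gamma = A^{u_2}$; the computation in Proposition \autoref{prop:8_1} shows $A_\gamma$ lies in both $Q_1$ and $Q_2$, and relabeling $M, N$ by their $u_2$-conjugates puts $\gamma$ in the stated form with $\gamma_a \in A_\gamma$. If $Q_1$ and $Q_2$ are not $\Gamma$-conjugate, I first choose $k \in K$ with $Q_1^k = Q_2$ (possible since they are associate), run the same argument on $k^{-1}\gamma$, and transport the conclusion back by conjugation. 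This yields, for every $\gamma \in \Gamma$, a common split component $A_\gamma$ and a decomposition of the asserted shape.

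With $A_\gamma$ fixed I would invoke the section-6 construction and Theorem \autoref{theorem:6_4}: choosing linearly independent $H_1, \dots, H_q$ in the positive chamber $\a_\gamma^+ \subset \a_\gamma = \mathrm{Lie}(A_\gamma)$, where $q = \rank(Q_1)$, and a base point $z \in X^{Q_1}$, the flat $\tilde S = (z, \exp_{Q_1}(t_1 H_1 + \cdots + t_q H_q), n)$ projects to a $q$-dimensional scattering flat $S = \pi(\tilde S)$ running between the Siegel ends of $Q_1$ and $Q_2$, and letting $z$ range over $X^{Q_1}$ gives the family parametrized by a common finite cover of $S_{Q_1}$ and $S_{Q_2}$. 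Finally the sojourn vector is declared to be $\log(\gamma_a) \in \a_\gamma$ by Definition \autoref{def:8_2}; since varying the $X^{Q_1}$-component $z$ leaves the $A_\gamma$-factor $\gamma_a$ untouched, the whole family shares this one vector.

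The step I expect to carry the real weight is confirming that $\log(\gamma_a)$ is genuinely well defined and is an invariant of the family rather than of a single flat. The Bruhat decomposition $\gamma = u_2 \gamma_a z w u_1$ is unique only once the Bruhat cell (equivalently the Weyl element $w$) is fixed and the factorization is normalized, so I would need to verify that the $A_\gamma$-component $\gamma_a$ is insensitive to the residual freedom in the decomposition, to the ordering of the $H_i$, and to the parametrizing point $z$. This is the higher-rank refinement of the scalar computation in Proposition \autoref{prop:5_8}, where the sojourn time was identified with $\|\log(A)\|$; here the norm is promoted to the element $\log(\gamma_a)$ itself, and establishing that this element is attached to the family is the crux of the argument.
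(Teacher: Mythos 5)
Your proposal matches the paper's treatment: Theorem \autoref{theorem:8_3} is stated there as a summary of the preceding discussion, and its justification is exactly the assembly you describe --- the common split component from Proposition \autoref{prop:8_1} (with the same two cases, $\Gamma$-conjugate versus merely associate), the parametrized family from Theorem \autoref{theorem:6_4}, and the sojourn vector from Definition \autoref{def:8_2}. The well-definedness of $\log(\gamma_a)$ that you flag as the crux is indeed left implicit in the paper as well, so your version is, if anything, slightly more careful than the original.
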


We will now associate a family of scattering geodesics to the family of scattering flats constructed before. This technique works under the assumption that the two associate parabolic subgroups correspond to adjacent chambers which we now elaborate.

Let $Q_1,Q_2$ be distinct associate minimal rational parabolic subgroups of $G$ of $\Q$-rank equal to $\text{q}$ and corresponding to adjacent Weyl chambers in the sense of \autoref{def:7_3}. 
Then as described in the previous section, for any
$\gamma \in \Gamma$ there is a rational parabolic subgroup $P$ which is of rational rank $\text{q}-1$
and contains both $Q_1$ and $Q_2^{\gamma}$.

Denote by $Q_3 = Q_2^{\gamma}$, then as before we know that $Q_1,Q_3$ have a common split component $A$ and we have the associated Langlands decomposition $Q_i = MAN_i$ for $i =1,3$.

Choose the Burhat decomposition for $\gamma $ given $\gamma = u_2 \gamma_azwu_1$, where $u_1,u_2 \in N$, $\gamma_a \in A$ and $z \in M$ along with $w \in K$, a representative in the normalizer of $A$ representing a non-trivial element of the Weyl group. Let $P = M_PN_PA_P$ be the associated Langlands decomposition, note that $M_P$ has $\Q$-rank one (when thought of as a reductive group in its own right), hence the associated Boundary locally symmetric space $S_P$ has rational rank one.

To proceed further define the rational parabolic subgroups $Q^i = M_P \cap Q_i$, then $Q^1$ , $Q^3$ are both rational parabolic subgroups of $M_P$ and hence are automatically associate(since $Rank_{\Q}(M_Q)$ = 1). They have corresponding Langlands decomposition given by $Q^i = M\tilde{A}N^i$, with $\tilde{A} = M_P \cap A$ satisfying $A = \tilde{A}A_P$ and $\a  = \tilde{a} \oplus \a_P$, with $\tilde{\a} = \text{Lie}(\tilde{A})$ and $\a_P  = \text{Lie}(A_P)$. Note that the decomposition of $\a$ described here is in fact orthogonal with respect to the Killing form on $\a$.

We also have $N_i = N^i N_Q$, denote by $\pi_i : N_i \longrightarrow N^i$ the natural projection maps. Note that $\tilde{\a}$ is a one-dimensional real vector space, we will be identifying it with $\R$ such that the restriction of the norm from the Killing form to $\tilde{\a}$, will just be the Euclidean norm on $\R$.

Finally, note that since $N^1 \neq N^3$ we can assume that $Q^1$ and $Q^3$ are opposite parabolic subgroups.

\pagebreak

Now we start with a scattering flat S in $\Gamma \backslash X$, which is the projection of a flat $\tilde{S}$ in $X$ of the form $S = (z,exp_{Q_1}(t_1H_1+...+t_qH_q),n_1)$, with $t_i \in \R$ and $H_i \in \a^+$ and $(z,n_1) \in X^{Q_1} \times N_1$ and as usual $\a^+ $ is the positive chamber in $\a$ corresponding to the choice of a set of positive roots for the adjoint action of $\a$ on $n_1 = \text{Lie}(N_1)$.

We will project this scattering flat S onto a scattering geodesic in the boundary locally symmetric space associated to the parabolic subgroup $P$ given by $S_P$, we now describe the construction.

We start with the horospherical decomposition of the symmetric space $X^P$ given by,

$X^P = X^{Q_1} \times \tilde{A} \times N^i$. Let $\tilde{\sigma}(t)$ be the geodesic in $X^P$ which is given in the horospherical decomposition just described, by $\tilde{\sigma}(t) = (z,exp_{Q^1}(tH),\pi_1(n_1))$, where $t\in \R$ and $H \in \tilde{\a}^+$.
Note that changing $H$ just changes the speed of the geodesic, but the geodesic as a curve remains the same.

Finally, let $\sigma(t)$ be the projection of $\sigma$
into $S_P$ under the canonical projection map from $X^P$ onto $S_P$. We define $\sigma$ to be the scattering geodesic in $S_P$ which will be called a projection of the scattering flat $S$ in $\Gamma \backslash X$.

We will finish this section by calculating the sojourn time of $\sigma$ in terms of the sojourn vector of the scattering flat S in $\Gamma \backslash X$.

Before we proceed, we need the following notion:

For a $\Q$-rank one parabolic subgroup $R$ with rational split component $A$ and Langlands
decomposition $R = MAN$, along with the unique positive root $\alpha$ corresponding to the
adjoint action of $a =\text{Lie}(A)$ on $n= \text{Lie}(N)$, define the horosphere at level $r \in \R^+$, by
$B_{P,r} = \{Y \in A|\alpha(\text{log}(Y)) = r\}$.

\begin{thm}\label{theorem:8_4}
Let $\sigma(t)$ be the scattering geodesic in $S_P$ constructed earlier, then the normalized sojourn time associated to $\sigma(t)$ is precisely $|\text{log}(\gamma_a)|$($\gamma_a$ as before), where $|\bullet|$ is the norm associated to the restriction of the Killing form on $\tilde{\a}$.
\end{thm}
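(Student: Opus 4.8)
The plan is to recognize $\sigma(t)$ as a scattering geodesic in the rank one locally symmetric space $S_P=\Gamma^P\backslash X^P$ and to reduce the computation of its sojourn time to the rank one formula of \autoref{prop:5_8}, carried out inside the Levi factor $M_P$. Since $M_P$ has $\Q$-rank one, the boundary symmetric space $X^P=X^{Q_1}\times\tilde A\times N^1$ is exactly of the type treated in \autoref{scatteringgeodesics}, and the geodesic $\tilde\sigma(t)=(z,\exp_{Q^1}(tH),\pi_1(n_1))$ with $H\in\tilde\a^+$ has the horospherical normal form required by \autoref{prop:5_1}. Hence $\sigma=\pi(\tilde\sigma)$ is eventually distance minimizing: as $t\to+\infty$ it leaves through the end attached to $Q^1$, and since $-\tilde\a^+$ is the positive chamber of the opposite parabolic $Q^3$, as $t\to-\infty$ it leaves through the end attached to $Q^3$. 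Thus $\sigma$ is a genuine scattering geodesic of $S_P$ between the ends of $Q^1$ and $Q^3$.

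The heart of the argument is to produce, from the Bruhat decomposition $\gamma=u_2\gamma_a z w u_1$ in $G$ relative to $Q_1=MAN$, the corresponding Bruhat decomposition in $M_P$ relative to $Q^1=M\tilde A N^1$ of the element of $\Gamma^P$ that glues the $Q^3$-end of $\sigma$ to a translate of the $Q^1$-end. First I would fix the common split component $A=\tilde A A_P$ of $Q^1,Q^3$, record the orthogonal splitting $\a=\tilde\a\oplus\a_P$, and note that $w=s_\alpha$, the nontrivial Weyl element of the rank one group $M_P$, fixes $\a_P$ pointwise and acts by $-1$ on $\tilde\a$. Using the compatible factorization $N_1=N^1N_P$ of the unipotent radical and the fact that $w=s_\alpha$ already normalizes $A_P$ and lies in $M_P$, I would push $\gamma$ through the projection $P\to M_P$ attached to $P=M_PA_PN_P$: the $N_P$- and $A_P$-directions are absorbed into the Levi decomposition, and what survives is a Bruhat decomposition in $M_P$ whose Weyl part is $\tilde w=s_\alpha|_{\tilde\a}$ and whose torus part is the $\tilde A$-component of $\gamma_a$.

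With this in hand I would apply the rank one sojourn time computation of \autoref{prop:5_8} to $S_P$: the sojourn time of the resulting family of scattering geodesics between the ends of $Q^1$ and $Q^3$ equals the Killing norm of the logarithm of this torus part. Because the splitting $\a=\tilde\a\oplus\a_P$ is orthogonal and the geodesic moves only in the $\tilde\a$-direction (indeed $H\in\tilde\a^+$), the $\a_P$-component of $\log\gamma_a$ lies along the split torus $A_P$ of $P$, which is common to both ends and fixed by $s_\alpha$; it therefore contributes nothing to the rank one sojourn time, leaving exactly the orthogonal projection of $\log\gamma_a$ onto $\tilde\a$. Since the norm $|\bullet|$ in the statement is by definition the Killing norm on $\tilde\a$, this projection has norm $|\log\gamma_a|$, as claimed. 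Finally I would verify that the $-2s$ normalization built into the rank one sojourn time in \autoref{prop:5_8} matches the normalization of $\sigma$, so that no spurious additive constant survives.

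The step I expect to be the main obstacle is precisely this descent of the Bruhat decomposition from $G$ to $M_P$, namely showing cleanly that the $M_P$-torus part of the gluing element is exactly the $\tilde A$-component of $\gamma_a$ and that the $A_P$- and $N_P$-contributions drop out. This requires tracking how $u_1,u_2\in N_1=N^1N_P$ interact with $\gamma_a$ and $s_\alpha$ under the projection $P\to M_P$, together with the identification of the arithmetic group $\Gamma^P\subset M_P$ with the image of $\Gamma\cap P$. Once this bookkeeping is complete, the orthogonality of $\tilde\a\oplus\a_P$ and the rank one result of \autoref{prop:5_8} deliver the conclusion at once.
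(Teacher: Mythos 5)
Your proposal is correct in substance but follows a genuinely different route from the paper. The paper's proof is a direct two-horosphere computation inside $X^P$: it reduces to the model geodesic $\tilde{\sigma}(t)=(Id,\exp(tH),Id)$, locates its entry into the $Q^1$-end at $\exp_{Q^1}(H)$ and its entry into the $Q^3$-end at $\exp_{Q^1}(\log(\gamma_a)-H)$, reads off the distance $|\log(\gamma_a)|+2r$ between the two horospheres $B_{Q^1,r}$ and $B_{Q^3,r}$, and subtracts the normalization $2r$. You instead descend the whole problem to the rank one Levi $M_P$ and invoke \autoref{prop:5_8} as a black box, with the new content being the projection of the Bruhat decomposition $\gamma=u_2\gamma_a z w u_1$ along $P=M_PA_PN_P\to M_P$. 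Your descent does go through: since $Q_1,Q_2$ are adjacent, the relevant Weyl representative $w=s_\alpha$ centralizes $\a_P=\ker\alpha$ and hence lies in $M_P$, so all five Bruhat factors lie in $P$ and the projection (a group homomorphism, as $A_PN_P$ is normal in $P$) sends them to a Bruhat decomposition in $M_P$ with torus part the $\tilde{A}$-component of $\gamma_a$; this is exactly the step the paper silently replaces by the bare assertion about where $\tilde{\sigma}$ meets $B_{Q^3,r}$. What your approach buys is modularity and an explicit explanation of why only the $\tilde{\a}$-projection of $\log(\gamma_a)$ survives (the $\a_P$-component is fixed by $s_\alpha$ and orthogonal to the direction of travel) -- a point the theorem's statement and the paper's proof both gloss over by implicitly treating $\log(\gamma_a)$ as an element of $\tilde{\a}$. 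What the paper's approach buys is brevity and self-containedness: it never needs $\gamma\in P$ or the homomorphism property of the projection. The one item you flag as an obstacle, the bookkeeping of the descent, is real but resolves exactly as you predict, so I would not call it a gap -- just make sure to record that $w=s_\alpha\in M_P$ is what puts $\gamma$ inside $P$ in the first place.
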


\begin{proof}
Note that to calculate the sojourn time of $\sigma(t)$, is the same thing as calculating the sojourn time for $\tilde{\sigma}$(Refer to the discussion in section 6). Also note that the sojourn time will be independent of the choice of $(z,n_1)$, so we might as well work with the geodesic $\tilde{\sigma}(t) = (Id,exp(tH),Id)$. We can choose $H \in \tilde{\a}^+$ large enough and using the identification of $\tilde{\a}$ with $\R$ we can just choose $r =H$ so that this geodesic intersects both the horospheres $B_{Q_1,r}$ and $B_{Q_3,r}$. Then the sojourn time will just be the difference of the time instants corresponding to these two intersection points.

Note that $\sigma(t)$ intersect $B_{Q^1,r}$ at the point $\sigma(t_1) = (Id,exp_{Q_1}(H),Id)$ and it intersects $B_{Q^3,r}$ at $\sigma(t_2) = (Id, exp_{Q_1}(\text{log}(\gamma_a)-H),Id)$, so the distance between the two horospheres is $|log(\gamma_a)|+2r$ and consequently the normalized sojourn time is $|\text{log}(\gamma_a)|$.
\end{proof}

We summarize the results of this section in the following theorem.

\pagebreak

\begin{thm}\label{theorem:8_5}
Let $\Gamma \backslash X$ be a locally symmetric space of rational rank $q$. Fix two distinct minimal rational parabolic subgroups $Q_1$ and $Q_2$ of ${G}$ which correspond to adjacent chambers in the sense of the above definition. Then, for any $\gamma \in \Gamma$ there exists a rational parabolic subgroup ${P}$ of rank $q-1$ in ${G}$ which contains both $Q_1$ and $Q_2^{\gamma}$. Further the family of scattering flats constructed in \autoref{theorem:8_3} projects to a family of scattering geodesics in the $\Q$-rank one locally symmetric space $S_P$ with a common normalized sojourn time given by $|log(\gamma_a)|$, where $|\bullet|$ denote the norm induced by the killing form on $Lie(A_{\gamma})$.
\end{thm}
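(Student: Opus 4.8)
The plan is to read this statement as the geometric counterpart of the scattering-matrix factorization of Section 7, and to assemble it from \autoref{prop:8_1}, \autoref{theorem:8_3}, and \autoref{theorem:8_4}. First I would fix the common split component. Since $Q_1$ and $Q_2$ are associate and distinct, so are $Q_1$ and $Q_2^\gamma$, and \autoref{prop:8_1} produces a common rational split component $A_\gamma$ together with a Bruhat decomposition $\gamma = u_2\gamma_a z w u_1$ relative to $Q_1 = MA_\gamma N$. Writing $Q_3 = Q_2^\gamma$, this gives $Q_1 = MA_\gamma N_1$ and $Q_3 = MA_\gamma N_3$ with a shared split component, which is precisely the input required by the construction of Section 7.

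Next I would build $P$. The adjacency hypothesis singles out a simple root $\alpha \in \Sigma^{++,1}$ whose wall separates the chambers attached to $Q_1$ and $Q_3$; setting $\a_P = \{H \in \a \mid \alpha(H) = 0\}$ and running the construction of Section 7 yields a rational parabolic $P = M_PA_PN_P$ of rank $q-1$ that contains both $Q_1$ and $Q_3$, with $M_P$ of $\Q$-rank one so that $S_P$ is a rank-one boundary locally symmetric space. The orthogonal splitting $\a = \tilde\a \oplus \a_P$ isolates the one-dimensional direction $\tilde\a$ in which the reduction to $M_P$ sees scattering; here I would also check that the reflection $w$ attached to $\alpha$ fixes $\a_P$, so that $\log(\gamma_a) \in \tilde\a$. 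This last point is what reconciles the sojourn-vector norm on $Lie(A_\gamma)$ appearing in \autoref{theorem:8_3} with the rank-one sojourn time measured on $\tilde\a$ in \autoref{theorem:8_4}.

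Then I would invoke \autoref{theorem:8_3} to obtain the family of $q$-dimensional scattering flats between the Siegel ends of $Q_1$ and $Q_2$ with common sojourn vector $\log(\gamma_a)$, noting that $Q_2$ and $Q_3 = Q_2^\gamma$ determine the same Siegel end. Applying the projection construction preceding \autoref{theorem:8_4}, I write a typical flat as $\tilde S = (z, exp_{Q_1}(t_1H_1 + \cdots + t_qH_q), n_1)$, use the horospherical decomposition $X^P = X^{Q_1} \times \tilde A \times N^i$, and project to the geodesic $\tilde\sigma(t) = (z, exp_{Q^1}(tH), \pi_1(n_1))$ before pushing down to $S_P$. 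I would verify that this descends to an honest scattering geodesic in $S_P$ and that, as $(z, n_1)$ range over the finite cover parametrizing the flats, the images sweep out a corresponding family of scattering geodesics. Finally \autoref{theorem:8_4} identifies the normalized sojourn time of each such geodesic as $|\log(\gamma_a)|$, independently of $(z, n_1)$, which is the assertion.

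The main obstacle is the bookkeeping of the second step: ensuring that the adjacency of $Q_1, Q_2$ persists for the pair $(Q_1, Q_2^\gamma)$ so that $P$ has rank exactly $q-1$ (rather than forcing the product of several rank-one factors flagged at the end of Section 7), and confirming $\gamma_a \in \tilde A$ so that the two norms genuinely coincide. Once these are in place, well-definedness of the projected family and the uniformity of the sojourn time follow from the $(z, n_1)$-independence already established in \autoref{theorem:8_4}.
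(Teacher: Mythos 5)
Your proposal follows the paper's own route: \autoref{theorem:8_5} is presented there explicitly as a summary of Section 8, assembled exactly as you do from \autoref{prop:8_1} (common split component via the Bruhat decomposition), \autoref{theorem:8_3} (the family of flats with sojourn vector $\log(\gamma_a)$), the construction of $P$ from $\a_P = \ker\alpha$ for the separating simple root, and the horospherical projection computation of \autoref{theorem:8_4}. The two points you flag for verification --- that adjacency survives passage from $(Q_1,Q_2)$ to $(Q_1,Q_2^{\gamma})$ so that $P$ has rank exactly $q-1$, and that $\log(\gamma_a)$ lies in $\tilde{\a}$ so the norm on $Lie(A_{\gamma})$ agrees with the rank-one sojourn time measured on $\tilde{\a}$ --- are left implicit in the paper as well, so your attention to them is a refinement rather than a divergence.
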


\section {Scattering on Quotients of \texorpdfstring{$\text{SL}(3,\R)$}{SLR}}

In this section we will study the spectral resolution and scattering flats for the locally symmetric space $ \Gamma \backslash X$ with $X = G/K$, where for the rest of the section we fix the following notation. $G $ 
will denote the semisimple lie group $SL(3,\R)$ with the associated eight dimensional Lie algebra $\gg$. $K$ and $\Gamma$ will stand for $SO(3)$ and $SL(3,\Z)$ respectively. The first task is to review the Lie algebra $\gg$ and its root space decomposition along with the construction of standard parabolic subalgebras of $\gg$.

Let $M(3,\R)$ be the vector space of 3 by 3 matrices with real entries, one can endow $M(3,\R)$ with a Lie algebra structure by simply defining the lie bracket of $X,Y \in M(3,\R)$ by defining $[X,Y] = XY-YX$ where for $X,Y \in M(3,\R)$, $XY$ represents the matrix multiplication. Then $\gg$ is the subalgebra of $M(3,\R)$ consisting of matrices of trace zero. The associated Killing form $B$ is given by the map $B: \gg \times \gg \longrightarrow \gg $, given by B(X,Y) =6Trace(XY).

Let $E_{ij}$ denote the three by three matrix all of whose entires are zero, except for the $ij$-th entry which is 1, then it can be easily shown that $\gg$ can be written as a direct sum(in the sense of vector spaces), as 
\begin{equation*}
 \gg = \displaystyle H \oplus_{i \neq j} \R E_{ij}
\end{equation*}

where $H$ is the Cartan subalgebra of $\gg$ generated as a two dimensional vector space by $H_1 = E_{11} -E_{22}$ and $H_2 = E_{22}-E_{33}$, note that as a set, $H$ is just the vector space of 3 by 3 real diagonal matrices with trace zero.
We will use the notation $h=(h_1,h_2,h_3)$ to denote an element of $H$ which is a diagonal matrix with diagonal elements $h_1,h_2,h_3$ such that $h_1+h_2+h_3 =0$. Note that the restriction of the Killing form $B$ to $H$ is positive definite and can be used to define an inner product in $H$. Finally, Let $H^*$ denote the real vector space dual of $H$, and we define the functionals $\alpha_{ij} : H \longrightarrow \R$, by $\alpha_{ij}(h) = h_i-h_j$, where $i,j = 1,2,3$.

Note that, the action of the Cartan subalgebra $H$ on $\g$ is simultaneously diagonalizable which leads to the root space decomposition of $\gg$, we have

\begin{equation}
\gg = H \oplus_{i \neq j} \gg^{ij}
\end{equation}
with $\alpha_{ij} \in H^*$ as above and $\gg^{ij } = \{Y \in \gg  \hspace{0.2cm}|  \hspace{0.2cm} [h,Y]   = \alpha_{ij}(h)Y  \hspace{0.2cm}\forall  \hspace{0.2cm} h \in H \}$.

Note that for the adjoint action of $H$ on $\gg$, $\Sigma^{++} =   \{\alpha_{12} , \alpha_{23} \}$ serves as a set of simple roots and the corresponding set of positive roots is given by $\Sigma^+ = \{\alpha_{12} , \alpha_{23} ,\alpha_{13}\}$ and the full set of roots is then $\Sigma = \{\pm\alpha_{12} , \pm\alpha_{23} ,\pm\alpha_{13}\}$.
We further define $\tau: H \longrightarrow \R$, to be the functional given by half the sum of the three positive roots above, more explicitly for $h = (h_1,h_2,h_3) \in H$, we have
$\tau(h) = h_1-h_3$.

Associated to this root space decomposition, we have the Weyl group $\mathcal{W} =S_3$, the group of symmetric permutations with 3 elements, and with the identification of $H$ as a subset of $\R^3$ as earlier, $\mathcal{W}$ acts on the Cartan subalgebra $H$ by permuting coordinates.

We also need to talk about the Cartan involution associated to $\gg$, start with the map 
$\theta: \gg \longrightarrow \gg$ given by $\theta(X) = -X^T$, then we have the associated Cartan decomposition given by $\gg = \k \oplus \p$, where $\k = \{Y \in \gg | \theta(Y) = Y \}$ and
$\p =  \{Y \in \gg | \theta(Y) = -Y \}$. Note that $H \subset \p$ and is a maximal subalgebra in $\p$.

We can now talk about the construction of standard parabolic subalgebras of $\gg$. First define the positive chamber associated to $\Sigma^+$, given by $H^+ = \{h \in H \hspace{0.2cm}|  \hspace{0.2cm} \beta(h) > 0 \hspace{0.2cm}  \forall \hspace{0.2cm} \beta \in \Sigma^+\}$. For
a subset $J \subset \Sigma^{++}$, define $H_J =  \cap_{\beta \in J} ker (\beta)$, recall that $H$ has a inner product induced from the Killing form $B$, so there in an orthogonal compliment $H^J$ associated to $H_J$, such that $H = H_J \oplus H^J$. Further let $\Sigma^J $ be the set of roots which are linear combinations of elements from $J$. Then the standard parabolic subalgebra $\p_j$ of $\gg$ is given by the following direct sum,

\begin{equation}
\p_J  = m_J\oplus H_J \oplus n_J
\end{equation}
where we denote $m = \k \cap Z(H)$, with $Z(H)$ being the centralizer of H in $\gg$ and
\begin{equation*}
n_J  =\bigoplus_{\beta \in \Sigma^+ \backslash \Sigma^J} g^{\beta} \hspace{0.2cm},\hspace{0.2cm} m_J  = m \oplus H^J \oplus \bigoplus_{\beta \in \Sigma^J} g^{\beta}
\end{equation*}

Thinking of the set $\Sigma_J^+  =  \Sigma^+ \backslash \Sigma^J$ as a set of positive roots for the adjoint action of H on $n_J$, one can define an analogous half the sum of positive roots
$\tau_J : H_J \longrightarrow \R$ by $\tau_J(h) = (1/2)\displaystyle \sum_{\beta \in \Sigma_J^+ } \beta(h)$.

An arbitrary subalgebra $\h$ of $\gg$ is called a parabolic subalgebra if it is conjugate to one of these standard parabolic subalgebras described above. Finally, a subgroup $P$ of $SL(3,\R)$ is called parabolic subgroup if it is the normalizer of a parabolic subalgebra $\h$ in $\gg$ under the adjoint action of $SL(3,\R)$ on its Lie algebra $\gg$. We will describe the standard parabolic subgroups of $SL(3,\R)$ in the next section along with their associated Langlands decomposition.

\pagebreak

We will end this section by explicitly describing the three standard parabolic subalgebra of $\gg$.

\begin{itemize}
\item[$\bullet$] start with $J = \emptyset$ the empty set, the associated minimal parabolic subalgebra is given by $\p_{\emptyset} = m_0 \oplus H \oplus n_0$., where $n_0 = \displaystyle \oplus_{\beta \in \Sigma^+} \gg_{\beta}$ and $m_0$ is the orthogonal complement(with respect to the Killing form) of $H$ in $Z(H)$ (the centralizer of $H$ in $\gg$). The associated standard minimal parabolic subgroup of $SL(3,\R)$ will be denoted by $P_0$ and will be described explicitly in the next section.

\item[$\bullet$] Let $J_1 = \{\alpha_{12}\}$, then associated maximal parabolic subalgebra of $\gg$ has a split component of real dimension one given by, $H_1 = \{(h,h,-2h) \in H\}$, with the associated map 
$\tau_1 : H_1 \longrightarrow \R$ , $\tau_1(h,h,-2h) = 3h$, the associated standard parabolic subgroup of $SL(3,\R)$ will be denoted by $P_1$ and will be described explicitly in the next section.

\item[$\bullet$] Let $J_2 = \{\alpha_{23}\}$, then associated maximal parabolic subalgebra of $\gg$ has a split component of real dimension one given by, $H_2 = \{(2h,-h,-h) \in H\}$, with the associated map 
$\tau_2 : H_2 \longrightarrow \R$ , $\tau_2(2h,-h,-h) = 3h$, the associated standard parabolic subgroup of $SL(3,\R)$ will be denoted by $P_2$ and will be described explicitly in the next section.

\end{itemize}

\subsection{Standard parabolic subgroup of G}
We will start with the Iwasawa decomposition of $G $ as, $G =KAN$, where $K$ as before denotes $SO(3)$, $A$ denote the set of 3 by 3 diagonal matrices with determinant one and positive diagonal entries and $N$ is the set of 3 by 3 upper triangular unipotent matrices.

Let $S$ denote space of 3 by 3 positive definite symmetric metrices with determinant one. Then G acts on $S$ in a natural fashion and one obtains the identification of $S$ with $G/K$, so most of the spectral analysis for the manifold $G/K$ is done by using this model space $S$, we will see some of this calculations soon.

Note that there are three $SL(3,\Z)$(from now on this subgroup will be denoted by $\Gamma$) conjugacy classes of parabolic subgroups of $SL(3,\R)$, from now on we will fix one standard representative in each class and they are described below.

\begin{itemize}
  \item[$\bullet$] There is the standard parabolic subgroup $P_0$ of rational rank two, which is just the subset of upper triangular matrices in $SL(3,\R)$. The associated Langlands decomposition is given by
  $P_0 = NAM_0$, where $M = \{\pm Id_{3 \times 3}\}$.

  \item[$\bullet$] There two more non-$\Gamma$-conjugate maximal parabolic subgroups $P_1,P_2$ both of rational rank one. Namely,
  
 \begin{equation}
P_1 = \Bigg\{  \begin{bmatrix}
a& b & c\\
d& e & f \\ 
0 & 0 & g \\
\end{bmatrix}\   \in SL(3,\R) \Bigg \}
 \end{equation}
 and
 \begin{equation}
P_2= \Bigg\{  \begin{bmatrix}
\alpha& \beta & \gamma\\
0& \delta & \omega \\ 
0 & \sigma & \tau \\
\end{bmatrix}\   \in SL(3,\R) \Bigg \}
 \end{equation}
 
Just like $P_0$, each $P_i$ has an associated Langlands decomposition $P_k = M_kA_kN_k$, $k = 1,2$.
\end{itemize}
where,
 \begin{equation*}
N_1 = \Bigg\{  \begin{bmatrix}
1& 0 & p\\
0& 1 & q \\ 
0 & 0 & 1 \\
\end{bmatrix}\   \in SL(3,\R) \Bigg \}
 \end{equation*}
  \begin{equation*}
M_1 = \Bigg\{  \begin{bmatrix}
a& b & 0\\
c& d & 0 \\ 
0 & 0 & \pm 1 \\
\end{bmatrix}\   \in SL(3,\R) \Bigg \}
 \end{equation*}
  \begin{equation*}
A_1 = \Bigg\{  \begin{bmatrix}
\alpha & 0 & 0\\
0& \alpha & 0 \\ 
0 & 0 & \alpha^{-2} \\
\end{bmatrix}\   \in SL(3,\R) , \alpha > 0 \Bigg \}
 \end{equation*}
and
\begin{equation*}
N_2 = \Bigg\{  \begin{bmatrix}
1& a & b\\
0& 1 & 0 \\ 
0 & 0 & 1 \\
\end{bmatrix}\   \in SL(3,\R) \Bigg \}
 \end{equation*}
  \begin{equation*}
M_2 = \Bigg\{  \begin{bmatrix}
\pm 1& 0 & 0\\
0& a & b \\ 
0 & c &  d \\
\end{bmatrix}\   \in SL(3,\R) \Bigg \}
 \end{equation*}
  \begin{equation*}
A_2 = \Bigg\{  \begin{bmatrix}
\alpha^2 & 0 & 0\\
0& \alpha^{-1} & 0 \\ 
0 & 0 & \alpha^{-1} \\
\end{bmatrix}\   \in SL(3,\R) , \alpha > 0 \Bigg \}
 \end{equation*}
 
 \pagebreak
 Associated to each of these three parabolic subgroups the symmetric space $G / K$ admits a horospherical decomposition, so that any $x \in G /  K$ can be represented as $x = (z,exp_{P_i}(h) ,w)  \in X^{P_i} \times A_i \times N_i$, where $exp_{P_i}: \H_{i} \longrightarrow A_i$ is the exponential map and $h \in \H_i$. Note that $h \in \H_i$ is uniquely determined by $x \in G /K$, let's denote this $h \in \H_i$ as $H_{P_i}(x)$
 
 Finally, note that associated to these three standard parabolic subgroups there are boundary symmetric spaces $X^{P_i} = M_i / (K \cap M_i)$ associated to $G/K$ and boundary locally symmetric spaces $X_{P_i} = (\Gamma \cap M_{P_i}) \backslash M_i / (M_i \cap K)$, associated to the locally symmetric space $X$, note that these spaces $X_{P_i}$ describe the geometry of the locally symmetric space $ X$ at infinity, (in particular they can be used to construct the Reductive Borel -Serre compactification of X). These 
 boundary components $X_{P_i}$ of course play an important role in the spectral theory of $X$, which we will discuss in the next section.

\subsection{Spectral decomposition of the Laplacian acting on \texorpdfstring{$\Gamma \backslash X$}{SPECDECOMP}} 

In this section we will study the decomposition of the space $L^2( \Gamma \backslash X)$  under the action of the Laplace operator $\Delta$, (there is a corresponding decomposition that can be shown  to be invariant under all $G$-invariant differential operators, but we will focus on the Laplacian $\Delta$).

Since, the space $X$ is noncompact with finite volume, it has both discrete as well as continuous spectrum, the later parametrized by the so-called \textbf{Eisenstein Series} which we will describe in this section.

In general, depending on the rational rank of a rational parabolic subgroup for $SL(3,\R)$, there are multiple such Eisenstein series, in the case of the space $X$, which is $\Gamma \backslash G /K$, there are only three $SL(3,\Z)$conjugacy classes of parabolic subgroups of $SL(3,\R)$, so there are essentially three different classes of Eisenstein series associated to the locally symmetric space $X$, which we will now describe. 

\subsection{Eisenstein Series associated to the parabolic subgroup \texorpdfstring{$P_0$}{SERIES}}

The general procedure for constructing Eisenstein is as follows, let $Q$ be a parabolic subgroup of 
$SL(3,\R)$, we will choose $Q$ to be one of the standard parabolic subgroups $P_i$ ($i =0,1,2$). Let $Q$ have the associated Langlands decomposition $Q = M_Q \times A_Q \times N_Q$, where $A_Q$ is the associated split component of $Q$, with Lie algebra $\a_Q$ and the associated exponential map $exp_Q : \a_Q \longrightarrow A_Q$. Further, let $X^Q$ be the associated boundary  symmetric space given by $M_Q / (K \cap M_Q)$  with $\pi_Q : X \longrightarrow X^Q$ be the projection map and $S_Q = (\Gamma \cap X^Q) \backslash X^Q$
.  Finally, let $\tau_Q$ be half the sum of positive roots corresponding to the adjoint action of $\a_Q$ on $n_Q = Lie(N_Q)$.  

Now choose an $L^2$-eigenfunction $\psi$ of the Laplacian on the associated boundary locally symmetric space $S_Q = (\Gamma \cap X^Q) \backslash X^Q$ and choose $\lambda \in \a_Q \otimes_{\R} \C$ such that $Re(\lambda) >> 0$. Then we define the Eisenstein Series, $E_Q(x,\lambda,\psi)$ ,a smooth $\Gamma$ invariant function on $G/K$, which will then descend to a smooth function on the space $X$.

\begin{equation}
E_Q(x,\lambda,\psi) = \sum_{\gamma \in (\Gamma \cap P) \backslash \Gamma} e^{(\tau_Q+\lambda)(H_Q(\gamma x))}\psi(\pi_Q(\gamma x))
\end{equation}

 If we have another parabolic subgroup $Q'$(not necessarily different from $Q$) with the associated Langlands decomposition 
 $Q' = M_{Q' } \times A_{Q'} \times N_{Q'}$, then the restriction of $E_Q(x,\lambda,\psi)$ along the parabolic subgroups $Q'$ is given by the integral.

 \begin{equation}
 E_{Q|Q'} (x,\lambda , \psi) = \int_{(\Gamma \cap N_{Q'}) \backslash N_{Q'}} E_{Q}(nx,\lambda,\psi) dn
 \end{equation}
 Where, $dn$ is a normalized Haar measure on $N_{Q'}$ so that the total volume of $N_{Q'}$ is equal to one.

 One of the main results of Langlands is the study of the restriction of Eisenstein Series along various parabolic subgroups of $G$. There are essentially two separate cases to be considered,

Coming back to the parabolic subgroup $P_0$, From its Langlands decomposition, it is clear that the boundary locally symmetric space associated to $P_0$ is just a one point space, $X_{P_0}  = \{*\}$, so that the choice for $\psi$ has to be the trivial identity function. Further, note that in this case, the Lie algebra of the associated split component $A_{P_0}$ can be identified in a natural way with the Cartan subalgebra $H$ of $\gg$, (this essentially follows from the construction of the standard parabolic subalgebra $\p_{\emptyset}$ in the previous section). Denote by $E_{P_0}(x,\lambda)$ the associated Eisenstein series. Also, the action of the Weyl group $S_3$ on $\a_{P_0}$ induces an action on $H^* \otimes_{\R} \C$.

Then the result of Langlands implies that 
$E_{P_0|P_i} (x,\lambda) = 0$ for $i=1,2$, whereas

\begin{equation} 
E_{P_0|P_0}(x,\lambda) = \sum_{w \in S_3} e^{(\sigma \lambda+\tau)(H(x)) } C(w,\lambda)
\end{equation}
where, $S_3$ is the permutation group on 3 letters and $C(w,\lambda)$ are the rank two scattering matrices associated to $SL(3,\Z) \backslash SL(3,\R) / SO(3)$. These scattering matrices can be calculated explicitly, for a proof of the following result refer to \cite{Miller2001}.
\begin{equation}
   C(w,\lambda) =  \displaystyle 
\prod_{1 \leq a < b \leq 3,w(a) > w(b)}   \frac{\Omega(\lambda_a -\lambda_b)}{\Omega(1+\lambda_a-\lambda_b)}
\end{equation}
Where $\Omega(s) = \pi^{-s/2}\Gamma(s/2)\zeta(s)$ with $s \in \C$.

\subsection{Reduction theory of \texorpdfstring{$\Gamma \backslash X$}{REDUC}}

We need to discuss the reduction theory of $\Gamma \backslash X$, which involves splitting this noncompact manifold into a compact core and certain ends that extend to infinity. More precise, the ends will be described by certain Siegel sets associated to the three standard parabolic subgroups of $SL(3,\R)$.

We need to note first that there is a well-defined map 
$I_Q : H \longrightarrow \a_Q$, where $Q$ is a parabolic subgroup
of $SL(3,\R)$ with Lie algebra associated to the split component given by $\a_Q$.

This map $I_Q$ is unique in the sense that, under the identification $H  = H_1 \oplus H_2$, the maps $I_{Q_i}$ are precisely the projection maps from $H$ to $H_i$(the Lie algebra of the split component associated to the standard parabolic subgroup $P_i(i=1,2)$ is just the projection map from $H$ to the subspace $H_i$ .

Further, observe that under the identification of $H$ with the subspace of $\R^3$ given by elements $ h   = (h_1,h_2,h_3)$ such that $h_1+h_2+h_3 = 0 $, the restriction of the Killing form to $H$ is just given by $B(h,k) = 6\langle h, k \rangle $, where
$\langle \bullet , \bullet \rangle $ is the standard Euclidean inner product from $\R^3$. As before, let $\tau_j$ denote the associated half the sum of positive roots associated to $P_i$. Then, under the Killing form there is a unique element $T_j \in H_j $ corresponding to $\tau_j$. Using the map $I$ defined as above, one can choose a unique element $T \in H$ such that $T$
satisfies the equation $I_{P_i}(T) = T_i$ for $ i =0,1,2$. In fact it can be easily calculated to be the vector $(1/4,0,-1/4)$.

Choose $r >> 0 $, and letting $T_{r} = rT \in H$, we first define a shifted Weyl chamber denoted by $H_Q^+(T_r)$ for a parabolic subgroup $Q$ of $SL(3,\R)$ given by,

\begin{equation}
H_Q^+(T_r)  = \{h \in H \hspace{0.1cm}| \beta(h) > \beta (I_Q(T_r)) \hspace{0.1cm} \forall \beta \in \Sigma^+ (Q,A_{Q}) \}
\end{equation}

we also define,

\begin{equation}
H_{Q,+}(T_r)  = \{h \in H \hspace{0.1cm}| B(I_Q(T_r)-h, v) \hspace{0.1cm} \forall \beta \in \Sigma^+ (Q,A_{Q}) , v\in \a_Q^+  \}
\end{equation}
Where $\a_Q^+$ is the positive Weyl chamber in $\a_Q$ associated to the set of positive roots $\Sigma^+(Q,A_Q)$. 

The first task is to describe these shifted chambers explicitly in the case of parabolic subgroups of $SL(3,\R)$. We start with the standard ones.

\begin{itemize}
    \item[$\bullet$] Let $Q  = P_0$, the standard minimal parabolic subgroup of $SL(3,\R)$. Then,
   \begin{equation*}
H_{P_0}^+(T_r)  = \{h =(h_1,h_2,h_3) \in \R^3 \hspace{0.1cm}|\sum_{i=1}^3 h_i = 0 , h_1 > h_2+r/4, h_2> h_3+r/4 \}
\end{equation*} 
and
\begin{equation*}
H_{P_0,+}(T_r)  = \{h =(h_1,h_2,h_3) \in \R^3 \hspace{0.1cm}|\sum_{i=1}^3h_i=0, h_1 \geq 0, h_3 \geq 0 , h_1 \leq h_3 +r/2 \}
\end{equation*} 

\item[$\bullet$] Let $Q  = P_1$, the  minimal parabolic subgroup of $SL(3,\R)$ defined earlier. Then,
   \begin{equation*}
H_{P_1}^+(T_r)  = \{(h,h,-2h) \in \R^3 \hspace{0.1cm}| 12h+r > 0 \}
\end{equation*} 
and
\begin{equation*}
H_{P_1,+}(T_r)  = \{(h,h,-2h) \in \R^3 \hspace{0.1cm}| 12h+r \leq 0 \}
\end{equation*} 

\item[$\bullet$] Let $Q  = P_2$, be the other minimal parabolic subgroup of $SL(3,\R)$ defined earlier. Then,
   \begin{equation*}
H_{P_2}^+(T_r)  = \{(2h,-h,-h) \in \R^3 \hspace{0.1cm}| 12h-r > 0 \}
\end{equation*} 
and
\begin{equation*}
H_{P_2,+}(T_r)  =  \{(2h,-h,-h) \in \R^3 \hspace{0.1cm}| 12h-r \leq 0 \}
\end{equation*}

\item[$\bullet$] Finally, let $Q$ be an arbitrary parabolic subgroup of $SL(3,\R)$, then there exists $k \in K$, such that 
$Q = kP_ik^{-1}$, where $P_i$ is one of the standard parabolic subgroups. The associated Langlands decomposition of $Q$ is given by $Q = (kM_{P_i}k^{-1}) kA_{P_i}k^{-1} (kN_{P_i}k^{-1})$, with the Lie algebra of the associated 
split component given by $\a_Q = Ad(k)\a_{P_i} $.

Then since, the Killing form on $\gg$ is invariant under all inner automorphism of the form $Ad(g) $, where $g \in SL(3,R)$, we have $H_{Q}^+(T_r)  =Ad(k) (H_{P_i}^+(T_r))$ and
$H_{Q,+}(T_r)  = Ad(k)(H_{P_i,+}(T_r))$.

We are now finally ready to state the decomposition of $\Gamma \backslash X$ coming from the reduction theory of proper parabolic subgroups of $SL(3,\R)$. Let

\begin{equation*}
    X_{T_r}  =  \bigcap_{Q} M_Q \times exp_Q(H_{Q,+}(T_r)) \times N_Q
\end{equation*}
where the intersection is over all proper parabolic subgroups
of $SL(3,\R)$.

 \begin{thm}\label{theorem:9_11}
 Let $T \in H$ be defined as before, and let $T_r = rT$, where $r >> 0$, then $X_{T_r}$ is a compact submanifold with corners of codimension zero and $\Gamma$-invariant, then 
 $X_r^T =  \Gamma \backslash X_{T_r}$ will serve as the compact core in $\Gamma \backslash X$.
 
 Furthermore, there exists compact submanifolds with corners, $M_i \subset \Gamma_{P_i} \backslash N_{P_i} \times X^{P_i}$ such that the subset $M_i \times  exp_Q(H_{Q}^+(T_r)) \subset \Gamma_{P_i} \backslash X$ is mapped injectively into
 $\Gamma \backslash X$ under the canonical projection map 
 $X \mapsto \Gamma \backslash X$, denoting these subspaces in $\Gamma \backslash X$ by $M^i$, we have the following disjoint decomposition of  $\Gamma \backslash X$

 \begin{equation}
     \Gamma \backslash X  = X_r^T \bigcup _{i = 0,1,2} M^i
 \end{equation}
 \end{thm}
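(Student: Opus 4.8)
The plan is to derive Theorem~\autoref{theorem:9_11} as the concrete specialization of the general precise reduction theory, \autoref{prop:3_5}, to the triple $(G,\Gamma,K) = (SL(3,\R), SL(3,\Z), SO(3))$. The single input that makes the statement concrete is that $SL(3,\R)$ has exactly three $\Gamma$-conjugacy classes of proper rational parabolic subgroups, with the standard representatives $P_0, P_1, P_2$ fixed above. Thus in the notation of \autoref{prop:3_5} we may take $d = 3$ and $\{R_1, R_2, R_3\} = \{P_0, P_1, P_2\}$, and the whole task reduces to translating the abstract data $\{\Omega_j,\ \a^+_{R_j}(T_r)\}$ into the explicit objects $\{M_i,\ H^+_{P_i}(T_r)\}$ recorded above.

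First I would pin down the distinguished element $T \in H$. For each $i$ the vector $T_i \in H_i$ is the Killing-form dual of $\tau_i$, a one-line computation using $B(h,k) = 6\langle h,k\rangle$; then $T$ is determined by the compatibility equations $I_{P_i}(T) = T_i$ for $i = 0,1,2$, where $I_{P_i}$ is the projection $H \to H_i$ introduced above. Because the three maps $I_{P_i}$ over-determine the two-dimensional $H$, I would first check that these equations are consistent and then solve them to obtain $T = (1/4, 0, -1/4)$. Feeding $T_r = rT$ into the definitions of $H^+_Q(T_r)$ and $H_{Q,+}(T_r)$ yields, for each standard $P_i$, exactly the inequalities computed in the itemized list preceding the theorem; for a general parabolic $Q = kP_ik^{-1}$ the corresponding regions are obtained by applying $\mathrm{Ad}(k)$, using the $\mathrm{Ad}(SL(3,\R))$-invariance of the Killing form.

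Next I would assemble the decomposition. Set $X_{T_r} = \bigcap_Q M_Q \times \exp_Q(H_{Q,+}(T_r)) \times N_Q$, the intersection over all proper parabolics, and identify it with the compact core $X_r$ of \autoref{prop:3_5}: a point of $X$ lies in $X_{T_r}$ precisely when, in every horospherical decomposition, its split component lies in the \emph{bounded} dual cone $H_{Q,+}(T_r)$ rather than in the shifted chamber $H^+_Q(T_r)$, i.e. when it runs off to infinity in no parabolic direction. This is exactly the complement, inside a fundamental domain, of the union of the Siegel ends, so $\Gamma$-invariance is immediate and compactness, together with the submanifold-with-corners structure of codimension zero, is inherited directly from \autoref{prop:3_5}. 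The three ends are then taken to be the $M^i$, the images of $M_i \times \exp_{P_i}(H^+_{P_i}(T_r))$ with $M_i \subset \Gamma_{P_i}\backslash N_{P_i}\times X^{P_i}$ the compact pieces supplied by \autoref{prop:3_5}; the injectivity of these maps into $\Gamma\backslash X$ for $r \gg 0$, and the disjointness of $X_r^T$ from the $M^i$ and of the $M^i$ from one another, are precisely the content of \autoref{prop:3_5} and require no new argument.

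The genuinely $SL(3)$-specific content, and hence the step most prone to error, is the bookkeeping that the abstract intersection defining $X_{T_r}$ closes up to a compact core under exactly these three families of inequalities: one must verify that the complementary regions $H_{Q,+}(T_r)$ attached to $P_0, P_1, P_2$ (and their conjugates) overlap correctly along their common walls, so that no direction of escape is left uncovered and no region is double-counted. I expect the explicit determination of $T$ and the matching of the three shifted chambers to be routine, and the only real care to be needed in checking this compatibility of the bounded cones along the shared boundary faces, which is what simultaneously forces the compactness of $X_r^T$ and the disjointness asserted in the final displayed decomposition.
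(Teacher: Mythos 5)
Your proposal is correct and follows exactly the route the paper intends: the paper gives no separate argument for \autoref{theorem:9_11}, presenting it as the direct specialization of the general precise reduction theory in \autoref{prop:3_5} to the case $d=3$ with representatives $P_0,P_1,P_2$, after the explicit computation of $T=(1/4,0,-1/4)$ and the shifted chambers. Your additional care about consistency of the equations $I_{P_i}(T)=T_i$ and the matching of the bounded cones along common walls is a reasonable elaboration of what the paper leaves implicit, not a departure from it.
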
   
\end{itemize}

We will refer to $M^i$ as the Siegel end at Height $T_r$ associated to the parabolic subgroup $P_i$.

\pagebreak

\subsection{Scattering Flats in \texorpdfstring{$\Gamma \backslash X$}{FLATS}}

We produce scattering flats on the $\Q$-rank two locally symmetric space
$\Gamma \backslash X = SL(3,\Z) \backslash SL(3,\R) /SO(3)$. Let $P_0$ be the minimal parabolic subgroup of $SL(3,\R)$ consisting of upper triangular matrices. Denote by $A$ the subgroup of diagonal matrices in $SL(3,\R)$ with positive diagonal entries along with its Lie algebra $\a = \text{Lie}(A)$.

Since there is only one association and $\Gamma$ conjugacy class of rational parabolic subgroups of $G$, with our chosen representative $P_0$, the only scattering flats are scattering between the Siegel set $M^0$ and itself.

Recall the rational Langlands decomposition for $P_0$
given by $M_0 AN$, where $A$ and $N$ are as before and $M_0$ is a 2 point space consisting of $\pm$ 3 by 3 identity matrix. Then the associated horospherical 
decomposition of $X  = X_0 \times A \times N$, where
$X_0  = \{z_0\}$. Also, recall that the Lie algebra of A was earlier identified with the Cartan subalgebra H.

Then a scattering flat in $\Gamma \backslash X$ is of the form $S  = \pi(\tilde{S})$, where 
$S = (z_0, exp_{P_0}(t_1h_1+t_2h_2),n)$, where
$n \in N$, $h_i \in H^+$, and $\pi: X \longrightarrow \Gamma \backslash X$ is the canonical projection. Note that since $X_0$ is trivial according to the discussion we had in section 5.3 about parametrizing space for scattering flats, this scattering flat $S$ is the unique scattering flat with a uniquely associate sojourn vector keeping track of the subset of $S$ contained in $X^T_r$.

We can certainly produce different scattering flats which have different associated sojourn vector.

If we recall the definition(\autoref{def:6_3}) of a scattering flat coming from an abelian subalgebra $\a$ of $\gg$, it requires that a shifted Weyl chamber in $\a$ to be isometrically embedded into $\Gamma \backslash X$. The size of the shifting parameter is what essentially is the sojourn vector (modulo a correction term because the ends described in \autoref{prop:3_5} do in fact depend on parameter $r > 0$).

Choose $\gamma \in \Gamma , \gamma \notin P_0$, then the two parabolic subgroups $P_0$ and $P_1 = \gamma P_0 \gamma^{-1}$ are different, but $\Gamma$ conjugate, hence correspond to the same Siegel set $M^0$. However the shifting parameter $\beta(I_{P_1}(T_r))$ depends on the parameter $r$ as well as the parameter $\gamma$, which is clear from the calculation of the shifted Weyl chamber $H^+_{Q}(T,r)$ for a general parabolic subgroup of $SL(3,\R)$.

So, the sojourn vector associated to a scattering flat $S$ in $\Gamma \backslash X$ which is coming from a flat in $X$ corresponding to $P_0$ and $P_1$ (with respect to a common split compliment which depends on $\gamma$) will also depend on $\gamma$. This discussion points to the conclusion that there are countably many scattering flats in $\Gamma \backslash X$ parametrized by $N \times \{\gamma \in \Gamma | \gamma \notin P_0\}$.

We are now set to define the sojourn vector associated to a scattering flat, as before choosing $\gamma \in \Gamma, \gamma \notin P_0$, along with its Bruhat decomposition given by $\gamma = u_2z \gamma_a w u_1$ with $u_1,u_2 \in N$, $\gamma_a \in A$ and $w \in S_3$. 

\begin{defn}\label{def:9_13}
Choose $\gamma$ as above, then such a $\gamma$ gives rise to a family of scattering flats parametrized by $N$ and with a common associated sojourn vector given by $\text{log}(\gamma_a) \in H$.
\end{defn}

\subsection{Asymptotic expansion of \texorpdfstring{$C(w,\lambda)$}{SCATMATX}} 

In order to work out an asymptotic expansion for $C(\omega ,\lambda) $ we need to review the factorization technique for section 5.4, we proceed as follows.

Choose $\gamma \in \Gamma, \gamma \notin P_0$ and denote by $P_1 = \gamma P_0 \gamma ^{-1}$, then $P_0$ and $P_1$ are distinct associate minimal, $\Gamma$ conjugate parabolic subgroups corresponding the same Siegel set in the reduction theory in \autoref{prop:3_5}. Then $P_0$ and $P_1$ share a common rational split component $A$, such that the Lie algebra $\a = \text{Lie}(A)$ is the Cartan sub-algebra $H$ after a suitable conjugation. Let $P_i = MAN_i$ be their rational Langlands decomposition, with associated boundary symmetric spaces $X^i$ and the corresponding boundary locally symmetric spaces $S_i$. Then we know that there is a family of scattering flats in $\Gamma \backslash X$ parametrized by $N_0$ with a common sojourn vector $\text{log}(\gamma_a)$ and they are projection of flats in $X$ of the form
$S = (z,exp_{P_0}(t_1H_1+t_2H_2),n)$, with $z \in M$, $n \in N_0$.

Proceeding as in section 7, we can choose a maximal parabolic subgroup $Q$ containing both $P_0,P_1$ along with Langlands decomposition $Q = M_QA_Q N_Q$
and two $\Q$ rank one rational parabolic subgroups $Q_0,Q_1$ of $M_Q$, such that the boundary symmetric space $X^Q$ can be naturally identified with the upper half plane $\H$ and the associated boundary locally space $S_Q$ with the quotient $SL(2,\Z) \backslash \H$. Then the discussion at the end of section 5.5 clearly gives us the following result.

\begin{thm}\label{theorem:9_14}
The family of scattering flats described as above projects onto a family of scattering geodesics in
$SL(2,Z) \backslash \H$ running between ends corresponding to $Q_0$ and $Q_1$ with a common sojourn time given by $|\text{log}(\gamma_a)|$, where $|\bullet|$ is the norm on the Lie algebra $\a$ associated to the Killing form.
\end{thm}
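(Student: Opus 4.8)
The plan is to deduce this statement as a direct instance of the general projection theorem \autoref{theorem:8_5}, the only genuinely $SL(3)$-specific work being the explicit identification of the resulting rank-one boundary datum with $SL(2,\Z)\backslash\H$. First I would collect the structure already in place: by \autoref{prop:8_1} the distinct $\Gamma$-conjugate minimal parabolics $P_0$ and $P_1=\gamma P_0\gamma^{-1}$ share a common split component $A$ (whose Lie algebra is $H$ after conjugation), and relative to the Bruhat decomposition $\gamma=u_2 z\gamma_a w u_1$ the family of scattering flats they determine has sojourn vector $\log(\gamma_a)\in H$ by \autoref{def:9_13}. I would then record that $P_0$ and $P_1$ correspond to adjacent chambers, i.e. that the Weyl part $w$ is a simple reflection such as $(12)$; this is exactly the hypothesis of \autoref{theorem:8_5} under which the factorization construction produces a $\Q$-rank one maximal parabolic $Q$ containing both.

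Next I would carry out the identifications particular to $SL(3,\R)$. The maximal parabolic $Q$ attached to the wall $\ker\alpha_{12}$ is $\Gamma$-conjugate to the standard block parabolic with a $2\times2$ $SL(2)$-factor, so the semisimple part of its Levi $M_Q$ is $SL(2,\R)$; hence $X^Q=M_Q/(K\cap M_Q)$ is isometric to $\H$, and the image of $\Gamma\cap M_Q$ under $Q\to M_Q$ is, after the standard normalization, $SL(2,\Z)$, giving $S_Q\cong SL(2,\Z)\backslash\H$. I would further identify the two $\Q$-rank one parabolics $Q_0=M_Q\cap P_0$ and $Q_1=M_Q\cap P_1$ as the opposite Borel subgroups of this $SL(2,\R)$, so that as ends of $S_Q$ they are both the unique cusp at $\infty$ approached from its two sides; the projected geodesics therefore run from the cusp to itself, matching Guillemin's setting in \autoref{theorem:2_4}.

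With these in hand, the projection map of \autoref{theorem:8_4} sends the flat $S=(z,\exp_{P_0}(t_1H_1+t_2H_2),n)$ to the geodesic $\tilde\sigma(t)=(z,\exp_{Q_0}(tH),\pi_0(n))$ in $X^Q$, which descends to a scattering geodesic in $SL(2,\Z)\backslash\H$ between the ends $Q_0,Q_1$; letting $(z,n)$ range over the parametrizing space yields the asserted family. The common normalized sojourn time is then read off directly from \autoref{theorem:8_4} as $|\log(\gamma_a)|$.

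The step I expect to demand the most care is reconciling the two norms. \autoref{theorem:8_4} measures the sojourn time in the one-dimensional split direction $\tilde\a$, whereas the present statement writes $|\log(\gamma_a)|$ for the Killing-form norm on all of $\a=H$. I would verify that the $\a_P$-component of $\log(\gamma_a)$ vanishes — equivalently that $w=s_{\alpha_{12}}$ sends $\log(\gamma_a)$ to its negative — so that the $\tilde\a$-norm agrees with the full $H$-norm; using the coordinates $\tilde\a=\R(1,-1,0)$, $\a_P=\R(1,1,-2)$, the explicit Killing form $B=6\,\mathrm{Tr}$, and the opposite-Borel condition on $Q_0,Q_1$, this should follow from the shape of the Bruhat decomposition with $w=(12)$. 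The accompanying bookkeeping — that the arithmetic group is exactly $SL(2,\Z)$ rather than a proper finite-index subgroup, and that the two opposite ends glue to the single cusp — is routine but worth stating precisely.
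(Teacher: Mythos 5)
Your proposal follows essentially the same route as the paper, which obtains this theorem as a direct instance of the general projection results (Proposition \ref{prop:8_1} and Theorems \ref{theorem:8_3}--\ref{theorem:8_5}) combined with the identification of the boundary symmetric space of the intermediate maximal parabolic with $\H$ and of $S_Q$ with $SL(2,\Z)\backslash\H$; the paper gives no argument beyond invoking that earlier discussion. Your extra care in reconciling the $\tilde{\a}$-norm appearing in Theorem \ref{theorem:8_4} with the full Killing-form norm on $\a$ used in the statement addresses a point the paper itself glosses over, and is worth carrying out explicitly.
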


In order to proceed further we need to talk about the scattering geodesics and associated scattering matrix for $SL(2,Z) \backslash \H$ and we do that now.

We need to talk about the scattering geodesics and scattering matrix on $X_1 = SL(2,\Z) \backslash \H $. Note that there is only one cusp in $X_1$ denoted by $k_{\infty}$, corresponding to the cusp at $\infty$. Then the procedure of chapter 3 gives all the scattering geodesics running between the end corresponding to $k_{\infty}$, in particular one obtains that such scattering geodesics are in one-one correspondence with the set $\Gamma_0 \backslash\{\gamma \in SL(2,\Z) , \gamma \notin U\}$, where, $\Gamma_{0}$ is the subgroup of $SL(2,\Z)$ generated by the map $z \mapsto z+1$ and $U$ is the set of upper triangular matrices in $SL(2,\R)$. Let $\T$ denote the set of associated normalized sojourn times(these don't depend on a cutoff parameter as opposed to the dependence we saw in section 2).

Note that associated to the cusp at $k_{\infty}$ in $X_1$, we have the associated Eisenstein series $E(z,s)$ given by,

\begin{equation}
    E(z,s) = \sum_{\sigma \in \Gamma_{0} \backslash SL(2,\Z)} (Im(\sigma z))^s
\end{equation}
where, $\Gamma_{0}$ is the subgroup of $SL(2,\Z)$ generated by the map $z \mapsto z+1$. Note that $E(z,s)$ is $SL(2,\Z)$ invariant by construction, therefore descends to well-defined function on $X_1$. Note that as we have already discussed in chapter 3, $E(z,s)$ converges uniformly and absolutely on compact subsets of the half plane $Re(s) > 1$ and defines a holomorphic function, and admits a meromorphic continuation to all $s \in \C$.

Further we have the following asymptotic expansion,

\begin{equation}
    E(z,s) = y^s+ C(s)y^{1-s}+O(e^{-c_1y}) \hspace{0.1cm}\text{as} \hspace{0.1cm} y \longrightarrow \infty
\end{equation}
Where $z = x+iy \in \H$, $C(s) = \displaystyle \frac{\Omega(2s-1)}{\Omega(2s)}$ is the uniquely associated scattering matrix to the cusp $k_{\infty}$ in $X_1$, with $\Omega(s)  = \pi^{-s/2} \Gamma(s/2)\zeta(s)$.

For a proof, refer to \cite[3.24]{Iwaniec2002}.

Note that, it follows then from the result of Guillemin in \autoref{theorem:2_4} that,

\begin{equation}
    C(s) = F(s)\sum_{T \in \T} e^{-T(1/2+is)}
\end{equation}

for $Im(s) \leq -3/2$, with $F(s)$ and the parameter \text{a} as defined in \autoref{theorem:2_4}.

Some comment need to be made as to the connection of the parameter $\text{a}$ to the parabolic subgroup $Q_0$ described earlier, note that one can show that it essentially happens that we have $a = I_{Q_0}(T_r)$, where $I_W(T_r)$ is the map introduced in section 6.4 for a general parabolic subgroup $W$ of $SL(3,\R)$.

We need a modified version of Guillemin's result which we now describe briefly. Let $r \in R$ and denote by $\Phi(r) = C(1/2+ir)$, then one can talk about the Fourier transform of $\Phi(r)$ in the sense of distributions. Then we have the following result ,

\begin{prop}\label{prop:6_1}
The singular support of the Fourier Transform of $\Phi$ is just the set of sojourn times $\T$.
\end{prop}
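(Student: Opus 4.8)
The plan is to deduce this from Guillemin's analysis (\autoref{theorem:2_4}), using the factorization $\Phi(r) = aF(r)\sum_{T\in\T}e^{-T(1/2+ir)}$ recorded just above the statement (valid for $\mathrm{Im}(r)\le -3/2$ and continued meromorphically to the real line). I would isolate two ingredients: the arithmetic sum $G(r)=\sum_{T\in\T}e^{-T(1/2+ir)}$, which carries the sojourn-time data, and the analytic envelope $F(r)=\int_{\R}(1+w^2)^{-(1/2+ir)}\,dw=\sqrt{\pi}\,\Gamma(ir)/\Gamma(1/2+ir)$, which fixes the local shape of each singularity.

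First I would compute the distributional Fourier transform of the envelope explicitly. With the convention $\widehat{f}(\xi)=\int_{\R} f(r)e^{ir\xi}\,dr$, interchanging the $r$ and $w$ integrations gives
\begin{equation*}
\widehat{F}(\xi)=2\pi\int_{\R}(1+w^2)^{-1/2}\,\delta\big(\xi-\ln(1+w^2)\big)\,dw = 2\pi\,e^{\xi/2}(e^{\xi}-1)^{-1/2}\quad(\xi>0),
\end{equation*}
and $\widehat{F}(\xi)=0$ for $\xi<0$. Thus $\widehat{F}$ is a locally integrable function supported on the half-line $[0,\infty)$, smooth away from the origin, with a single conormal ($|\xi|^{-1/2}$-type) singularity at $\xi=0$, so $\mathrm{sing\,supp}\,\widehat{F}=\{0\}$. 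Since multiplication by $F$ becomes convolution by $\widehat{F}$, the factorization formally yields $\widehat{\Phi}(\xi)=a\sum_{T\in\T}e^{-T/2}\,\widehat{F}(\xi-T)$. Each summand is supported on $[T,\infty)$ and is singular precisely at $\xi=T$; because $\T$ is discrete with finite multiplicities, near any point only finitely many summands are singular, and the half-line support guarantees that the singularity at a given $T$ cannot be cancelled (every other singular term sits strictly to its right). This gives both inclusions $\mathrm{sing\,supp}\,\widehat{\Phi}=\T$.

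The main obstacle is that the termwise manipulation above is not legitimate as stated: the Dirichlet-type series $G(r)$ converges only for $\mathrm{Im}(r)\le -3/2$ — its abscissa of convergence is governed by the exponential growth rate of the number of scattering geodesics on $SL(2,\Z)\backslash\H$ of bounded sojourn time — so on the real axis $\Phi$ is merely the meromorphic continuation, and $\sum_T e^{-T/2}\delta_T$ is not a tempered distribution in the naive sense. I would resolve this by contour deformation: realize $\widehat{\Phi}(\xi)=\int_{\R}\Phi(r)e^{ir\xi}\,dr$ distributionally (paired against a test function in $\xi$) and push the contour down to the line $\mathrm{Im}(r)=-3/2$. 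Crossing the poles of $C(1/2+ir)$ contributes residue terms of the form $e^{ir_{0}\xi}$, which are entire in $\xi$ and hence invisible to the singular support. On the shifted line the convergent expansion applies, and the factor $e^{(3/2)(T-\xi)}$ arising from $e^{ir(\xi-T)}$ at $\mathrm{Im}(r)=-3/2$ supplies the extra weight $e^{-3T/2}$, so that $\sum_T e^{-T/2}e^{-3T/2}=\sum_T e^{-2T}$ converges absolutely and uniformly for $\xi$ in bounded sets. This legitimizes interchanging sum and integral on the shifted contour, after which the singularity analysis of the previous paragraph becomes rigorous.

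I expect the two delicate points to be, first, verifying that the residue (resonance) terms picked up in the contour shift are genuinely smooth in $\xi$ — equivalently, that the discrete and cuspidal contributions create no additional conormal singularities — and second, the growth/bookkeeping estimates needed to justify the contour shift and the uniform convergence on $\mathrm{Im}(r)=-3/2$ (using Stirling bounds for $\Gamma$ and the boundedness $|C(1/2+ir)|=1$ off the poles). Once these are in place, the upper bound $\mathrm{sing\,supp}\,\widehat{\Phi}\subseteq\T$ follows from the termwise singular support, and the matching lower bound follows because the half-line support of $\widehat{F}$ exposes each $T\in\T$ as an uncancelled singularity, using the discreteness and finite multiplicity of $\T$ established earlier.
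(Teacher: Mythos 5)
Your argument is correct in substance, but note that the paper does not actually prove this proposition --- it only points to example (i2) in Section 2 of \cite{Zelditch1992} --- so you are supplying the argument that the text delegates to a reference; what you have written is essentially the standard proof that the cited source carries out. The three ingredients you isolate are the right ones: the explicit transform $\widehat{F}(\xi)=2\pi e^{\xi/2}(e^{\xi}-1)^{-1/2}$ for $\xi>0$ and $0$ for $\xi<0$ (your computation checks out), the observation that positivity of the coefficients $e^{-T/2}$ together with the half-line support of $\widehat{F}$ and the discreteness of $\T$ forbids cancellation at any $T$, and the contour shift to $\operatorname{Im}(r)=-3/2$ to legitimize termwise transformation of a Dirichlet series that diverges on the real axis. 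Two points should be made precise before the argument closes. First, $F(r)=\sqrt{\pi}\,\Gamma(ir)/\Gamma(1/2+ir)$ has a simple pole at $r=0$, so $F$ is not locally integrable on $\R$ and $\widehat{F}$ cannot literally be obtained by the interchange of integrals you perform; it must be read as the transform along a line $\operatorname{Im}(r)<0$ (which is consistent with your contour-shift framework and reproduces your formula, the nonzero limit $2\pi$ of $\widehat F$ as $\xi\to+\infty$ being exactly the residue of $F$ at $r=0$). Second, the residue bookkeeping in the strip $-3/2\le\operatorname{Im}(r)<0$ is finite and explicit here: writing $C(s)=\Omega(2s-1)/\Omega(2s)$ with $\Omega(s)=\pi^{-s/2}\Gamma(s/2)\zeta(s)$, the strip corresponds to $1/2<\operatorname{Re}(s)\le 2$, where the only pole is at $s=1$ (from $\zeta(2s-1)$), i.e.\ $r=-i/2$, contributing the single smooth term proportional to $e^{\xi/2}$; this settles your first ``delicate point'' without any further input about the cuspidal spectrum. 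With those two clarifications, and the Stirling/functional-equation bounds you already mention to justify the shift against compactly supported test functions, the proof is complete.
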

For a proof and further discussion, refer to example $\text(i2)$ in section 2 from \cite{Zelditch1992}.

Now we return to the rank two scattering matrix $C(w,\lambda)$ described in equation 6.5.1 with $w \in S_3$, $\lambda \in H^* \otimes _{\R} \C$ such that $Re(\lambda) > > 0$ and $a,b \in \{1,2,3\}$ chosen such that $a < b$ and $w(a) > w(b)$, then we have

\begin{equation}
    C(w,\lambda) = C(\tilde{s})
\end{equation}
with $\tilde{s} = (1/2)(\lambda_a-\lambda_b +1)$.

\pagebreak

We now choose $\lambda$ purely imaginary, in the sense that we choose $\lambda = (i\eta_1,i\eta_2,i\eta_3)$, with $\eta = (\eta_1,\eta_2,\eta_3) \in \R^3$, and define 
$S(w,\eta) = C(w,\lambda)$, then equation 6.5.6 translates to

\begin{equation}
    S(w,\eta) = \Phi(r)
\end{equation}
with $2r = \eta_a -\eta_b$.

We will now calculate the generalized Fourier transform of $S(w,\eta)$, for simplicity we choose $w = (12) \in S_3$, so $a = 1$ and $b =2$ works. The calculations for other choices of $w \neq Id$ can be worked out in a similar fashion as shown below.

\begin{align*}
    \hat{S}(w,\eta) & = \int_{-\infty}^{\infty}\int_{-\infty}^{\infty}\int_{-\infty}^{\infty}e^{-i\eta_1 \zeta_1}e^{-i\eta_2 \zeta_2}e^{-i\eta_3 \zeta_3}S(w,\eta) d\eta_1 d\eta_2 d\eta_3 \\
& =     \int_{-\infty}^{\infty}\int_{-\infty}^{\infty}\int_{-\infty}^{\infty}e^{-i\eta_1 \zeta_1}e^{-i\eta_2 \zeta_2}e^{-i\eta_3 \zeta_3}\Phi\Big (\frac{\eta_1 -\eta_2}{2} \Big) d\eta_1 d\eta_2 d\eta_3 \\
& = \int_{-\infty}^{\infty}\int_{-\infty}^{\infty}e^{-i\eta_1 \zeta_1}e^{-i\eta_2 \zeta_2} \Phi\Big (\frac{\eta_1 -\eta_2}{2} \Big)d\eta_1 d\eta_2\int_{-\infty}^{\infty}e^{-i\eta_3 \zeta_3} d\eta_3 \\
&   = (1/2)\delta_0(\zeta_3) \int_{-\infty}^{\infty}\int_{-\infty}^{\infty}e^{-iz_1 (\zeta_1/2+\zeta_2/2)}e^{-iz_2(\zeta_1/2 -\zeta_2/2)} \Phi(z_1)dz_1 dz_2 \\
& = (1/2)\delta_0(\zeta_3)\delta_0(\zeta_1/2 - \zeta_2/2)\hat{\Phi}(\zeta_1/2 +\zeta_2/ 2)
\end{align*}
Where $\delta_0(\zeta) $ is the Dirac Delta distribution on $\R$.

From this calculation along with the result of \autoref{prop:6_1} we get the following result.

\begin{thm}\label{theorem:9_21}
For $w = (12) \in S_3$, the Singular Support of the Fourier transform of $S(w,\eta)$ is precisely the set ${(T,T,0) \subset \R^3}$ where $T \in \T$. 
\end{thm}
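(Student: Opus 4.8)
The plan is to read the answer directly off the explicit generalized Fourier transform computed in the display immediately preceding the statement, and then to analyze the singular support of the resulting distribution by separating variables. Recall that the computation yields $\hat{S}(w,\eta) = \tfrac12\,\delta_0(\zeta_3)\,\delta_0\!\big(\tfrac{\zeta_1-\zeta_2}{2}\big)\,\hat{\Phi}\!\big(\tfrac{\zeta_1+\zeta_2}{2}\big)$ as a distribution in the dual variable $\zeta=(\zeta_1,\zeta_2,\zeta_3)$. First I would introduce the invertible linear change of coordinates $u=\zeta_3$, $v=\tfrac{\zeta_1-\zeta_2}{2}$, $s=\tfrac{\zeta_1+\zeta_2}{2}$. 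Since singular support is invariant under diffeomorphisms, and here $\zeta\mapsto(u,v,s)$ is a linear isomorphism of $\R^3$, it suffices to locate the singular support of $\tfrac12\,\delta_0(u)\otimes\delta_0(v)\otimes\hat{\Phi}(s)$ and then pull back.

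In these coordinates the distribution is a genuine tensor product in the three separate variables. The two factors $\delta_0(u)$ and $\delta_0(v)$ have support, hence singular support, equal to $\{0\}$, so both the support and the singular support of the whole distribution are contained in the line $\{u=v=0\}$. It then remains to determine, along this line (coordinatized by $s$), where the distribution fails to be smooth, and this is governed by the remaining factor $\hat{\Phi}(s)$, whose singular support is exactly the set $\T$ of sojourn times by \autoref{prop:6_1}. Pulling back, the constraints $u=0$, $v=0$, $s=T\in\T$ translate into $\zeta_3=0$, $\zeta_1=\zeta_2$, and $\zeta_1=\zeta_2=T$, i.e. the point $(T,T,0)$; running this over all $T\in\T$ gives the asserted singular support $\{(T,T,0):T\in\T\}$.

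The hard part will be the bookkeeping for the singular support of the tensor product, specifically the interaction between support and singular support. Because $\delta_0(u)\otimes\delta_0(v)$ is concentrated on the lower-dimensional line $\{u=v=0\}$, the tensor product is nowhere a smooth function on that line, so a naive application of the product calculus only gives $\text{sing supp}\,\hat{S}=\{u=v=0\}\times\text{supp}\,\hat{\Phi}$ rather than $\{u=v=0\}\times\text{sing supp}\,\hat{\Phi}$. To cut the answer down to exactly $\{(T,T,0)\}$ one must argue that the $s$-singularities carrying the geometric content are precisely those inherited from $\hat{\Phi}$. I would make this precise either by isolating the behaviour transverse to the concentration line and thereby reducing to the one-dimensional statement of \autoref{prop:6_1}, or by passing to wavefront sets, where the tensor-product calculus cleanly separates the conormal (delta-type) directions over $\{u=v=0\}$ from the sojourn-time singularities of $\hat{\Phi}$ at $s\in\T$. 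This is the only step that is not purely formal; the coordinate pullback and the final identifications are routine.
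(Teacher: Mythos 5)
Your proposal follows essentially the same route as the paper: the paper's entire proof of \autoref{theorem:9_21} consists of the displayed computation of $\hat{S}(w,\eta)=\tfrac12\,\delta_0(\zeta_3)\,\delta_0\big(\tfrac{\zeta_1-\zeta_2}{2}\big)\,\hat{\Phi}\big(\tfrac{\zeta_1+\zeta_2}{2}\big)$ followed by a single sentence invoking \autoref{prop:6_1}, which is exactly the content of your first two paragraphs. The one place you go beyond the paper is the tensor-product subtlety you flag at the end, and you are right to flag it: since $\delta_0(u)\otimes\delta_0(v)$ is nowhere locally a smooth function on its support, the literal singular support of $\hat{S}$ is the line $\{\zeta_1=\zeta_2,\ \zeta_3=0\}$ intersected with the support of the product, and this is in general strictly larger than $\{(T,T,0):T\in\T\}$ because $\hat{\Phi}$ is not supported in the discrete set $\T$ (by Guillemin's expansion it is $\hat{F}$ convolved against a sum of point masses at $\T$, hence has full support). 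The paper silently ignores this point; the stated conclusion is correct only under the reading you propose, namely locating the singularities in the variable $s=\tfrac{\zeta_1+\zeta_2}{2}$ along the line of concentration, or equivalently separating the conormal (delta-type) part of the wavefront set from the singularities inherited from $\hat{\Phi}$. So your argument is, if anything, more careful than the paper's; to match the paper you need only the computation plus \autoref{prop:6_1}, but your final paragraph identifies the precise sense in which the theorem's phrase ``singular support'' must be interpreted for the statement to be true.
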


Note that, unlike the rank one scattering matrices here the Singular Support of $S(w,\eta)$ only depends on the sojourn times of the projected geodesics and not on the full set of sojourn vectors in $\Gamma \backslash X$, which is precisely due to the factorization property.
\pagebreak

This equation along with Guillemin's Theorem and the discussion we had earlier can be summarized in the following result which we describe next.

\begin{thm}\label{theorem:9_22}
Let $\text{X} = SL(3,\Z) \backslash SL(3,\R) /SO(3)$ be the rational rank two locally symmetric space as before.  

A) For any $\gamma \in SL(3,\Z)$ such that $\gamma \notin P_0$, there is a continuous family of scattering flats in $X$ parametrized by the space of upper triangular unipotent matrices $N$ in $SL(3,\R)$ and all of these scattering flat have the same \textbf{sojourn vector} given by $\text{log}(\gamma_a) \in \a$ coming from the Bruhat decomposition of $\gamma$ as before.
We further have that any such given family of scattering flats projects onto a family of scattering geodesics into an associated Locally symmetric space which can be naturally identified with $SL(2,\Z) \backslash \H$ with a common normalized sojourn time given by $|\text{log}(\gamma_a)|$ where $|\bullet|$ is the norm on the Lie algebra $\a$ coming from the Killing form.

B) Denote by the set of such sojourn times by $\T $ as before. If $C(w,\lambda)$ denotes one of the rank two scattering matrices associated to $\Gamma \backslash X$ as before with $\lambda = (\lambda_1,\lambda_2,\lambda_3) \in H^* \otimes _{\R} \C$ with $Re(\lambda) >> 0$ and $a,b \in \{1,2,3\}$ chosen such that $a < b$ and $w(a) > w(b)$, we have for
$Im(\lambda_a -\lambda_b +1) \leq -3$ and
$\tilde{s} = (1/2)(\lambda_a -\lambda_b+1)$,
\end{thm}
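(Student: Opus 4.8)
The plan is to prove the two parts in turn: part A by specializing the general theory of the earlier sections to $G = SL(3,\R)$, and part B by combining the factorization of the rank-two scattering matrix with Guillemin's formula on the modular surface.

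For part A, I would regard the pair $P_0$ and $P_1 = \gamma P_0 \gamma^{-1}$ as the two distinct associate minimal parabolic subgroups and apply Proposition \autoref{prop:8_1} to extract their common split component $A$ from the Bruhat decomposition $\gamma = u_2 z \gamma_a w u_1$; Definition \autoref{def:9_13} then names $\log(\gamma_a) \in H$ as the common sojourn vector. Because $M_0 = \{\pm \mathrm{Id}\}$ makes the boundary symmetric space $X^{P_0}$ a single point, the boundary direction of the parametrizing family of Theorem \autoref{theorem:6_4} is trivial and the remaining freedom is carried entirely by the unipotent factor, so the family is parametrized by $N$ as in Definition \autoref{def:9_13}. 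The projection claim is then exactly Theorem \autoref{theorem:9_14}, which identifies the target rank-one space $S_P$ with $SL(2,\Z)\backslash\H$ and supplies the common normalized sojourn time $|\log(\gamma_a)|$.

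For part B, the core is the identity $C(w,\lambda) = C(\tilde{s})$ with $\tilde{s} = \tfrac{1}{2}(\lambda_a - \lambda_b + 1)$. I would establish it abstractly through the factorization machinery developed above, applying the Harish-Chandra reduction \cite[Lemma 116]{HarishChandra1968} to the maximal parabolic $Q$ that dominates the adjacent chambers of $P_0$ and $P_1$; this collapses the rank-two matrix $C_{Q^1|Q^3}$ to the rank-one matrix $C_{R_1|R_3}$ of the parabolics $R_i = M_Q \cap Q^i$ inside $M_Q$, whose boundary space is $SL(2,\Z)\backslash\H$. As an independent normalization check I would compare the explicit product formula for $C(w,\lambda)$ against $C(s) = \Omega(2s-1)/\Omega(2s)$: when $w$ has the single inversion $(a,b)$ the product reduces to the lone factor $\Omega(\lambda_a-\lambda_b)/\Omega(1+\lambda_a-\lambda_b)$, and substituting $2\tilde{s}-1 = \lambda_a - \lambda_b$ and $2\tilde{s} = \lambda_a-\lambda_b+1$ identifies this with $C(\tilde{s})$. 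Feeding this into the specialization of Guillemin's Theorem \autoref{theorem:2_4} to $SL(2,\Z)\backslash\H$ then finishes the argument: the hypothesis $\mathrm{Im}(\lambda_a - \lambda_b + 1)\leq -3$ translates under $\tilde{s} = \tfrac{1}{2}(\lambda_a-\lambda_b+1)$ into exactly Guillemin's convergence range $\mathrm{Im}(\tilde{s})\leq -3/2$, whence
\[
C(w,\lambda) = C(\tilde{s}) = F(\tilde{s})\sum_{T\in\T} e^{-T(1/2 + i\tilde{s})},
\]
an expansion whose summands depend only on the sojourn times $T\in\T$; for general $\lambda$ the right-hand side is read as the meromorphic continuation of this series.

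The main obstacle I anticipate lies not in any single computation but in the bookkeeping of the factorization step: one must verify that restricting the spectral parameter $\lambda_0 = \mathrm{Ad}(\gamma)(\lambda)$ to the one-dimensional torus $\tilde{A} = M_Q \cap A$, together with the shift by the half-sum of roots $\tau_{R_i}$ attached to the rank-one parabolics $R_i$, produces precisely the variable $s$ that enters $C(s)$. Tracking this affine change of variables — in particular the origin of the $+1$ shift in $\tilde{s}$ coming from $\tau$ — is where the argument must be made airtight, and the $\Omega$-quotient comparison above serves as the decisive consistency check that the normalization has been carried through correctly.
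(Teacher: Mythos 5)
Your proposal is correct and follows essentially the same route as the paper: part A is assembled from the common split component of $P_0$ and $\gamma P_0\gamma^{-1}$ (Proposition \ref{prop:8_1}, Definition \ref{def:9_13}), the triviality of $X^{P_0}$ reducing the parametrizing space of Theorem \ref{theorem:6_4} to $N$, and Theorem \ref{theorem:9_14}; part B is the identity $C(w,\lambda)=C(\tilde{s})$ obtained by matching the product formula $\Omega(\lambda_a-\lambda_b)/\Omega(1+\lambda_a-\lambda_b)$ with $C(s)=\Omega(2s-1)/\Omega(2s)$ under $2\tilde{s}-1=\lambda_a-\lambda_b$, fed into Guillemin's expansion on the range $\mathrm{Im}(\tilde{s})\leq -3/2$. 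Your explicit $\Omega$-quotient normalization check and the translation of the convergence range are exactly the computations the paper relies on, so there is nothing materially different to flag.
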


\begin{equation}
    C(w,\lambda)  = C(\tilde{s})
\end{equation}

\begin{equation}
    C(w,\lambda)  = F(\tilde{s})\sum_{T \in \T} e^{-T(1/2+i\tilde{s})}
\end{equation}
where  
$F(s)$ is the function defined in \autoref{theorem:2_4}.

Note that equation 9.24 contains an asymptotic expansion for the scattering matrix which is very similar to what was worked out by Guillemin in \cite{Guillemin1976}, of course the one big difference is that the terms of this expansion only depend on the sojourn times of the projected scattering geodesics rather than the set of full sojourn vectors in $\Gamma \backslash X$.

\nocite{*}
\bibliographystyle{aomalpha}    % Alphabetical
\bibliography{thesis-bib2}      % Corrected bibtex
%\bibliographystyle{alpha}      % Old alphabetical
%\bibliography{thesis-bib}      % Old bibtex

\end{document}